\newcommand{\e}{\mathbbm{1}}
\newcommand{\limto}{{\displaystyle\lim_{\longrightarrow}}}
\newcommand{\rightlim}{\mathop{\limto}}
\newcommand{\leftlim}{\mathop{\displaystyle\lim_{\longleftarrow}}}
\newcommand{\limfromn}{\leftlim\limits_{\raise3pt\hbox{$n$}}}
\newcommand{\limton}{\rightlim\limits_{\raise3pt\hbox{$n$}}}
\newcommand{\rightlimit}[1]{\mathop{\lim\limits_{\longrightarrow}}\limits%
                   _{\raise3pt\hbox{$\scriptstyle #1$}}}
\newcommand{\leftlimit}[1]{\mathop{\lim\limits_{\longleftarrow}}\limits%
                   _{\raise3pt\hbox{$\scriptstyle #1$}}}
\numberwithin{equation}{section}
\newcommand{\rar}[1]{\stackrel{#1}{\longrightarrow}}
\newcommand{\into}{\hookrightarrow}
\newcommand{\al}{\alpha}
\newcommand{\be}{\beta}
\newcommand{\ga}{\gamma}
\newcommand{\de}{\delta}
\newcommand{\ze}{\zeta}
\newcommand{\eps}{\epsilon}
\newcommand{\sg}{\sigma}
\newcommand{\te}{\theta}
\newcommand{\Om}{\Omega}
\newcommand{\vp}{\varphi}
\newcommand{\bA}{{\mathbb A}}
\newcommand{\bF}{{\mathbb F}}
\newcommand{\bG}{{\mathbb G}}
\newcommand{\bP}{{\mathbb P}}
\newcommand{\bQ}{{\mathbb Q}}
\newcommand{\bZ}{{\mathbb Z}}
\newcommand{\cA}{{\mathcal A}}
\newcommand{\cG}{{\mathcal G}}
\newcommand{\cL}{{\mathcal L}}
\newcommand{\cO}{{\mathcal O}}
\newcommand{\cR}{{\mathcal R}}
\newcommand{\cW}{{\mathcal W}}
\newcommand{\sG}{{\mathscr G}}
\newcommand{\sX}{{\mathscr X}}
\newcommand{\Ker}{\operatorname{Ker}}
\newcommand{\End}{\operatorname{End}}
\newcommand{\Hom}{\operatorname{Hom}}
\newcommand{\Spec}{\operatorname{Spec}}
\newcommand{\pr}{\mathrm{pr}}
\newcommand{\ad}{\operatorname{ad}}
\newcommand{\Ind}{\operatorname{Ind}}
\newcommand{\tr}{\operatorname{tr}}
\newcommand{\Tr}{\operatorname{Tr}}
\newcommand{\Nrd}{\operatorname{Nrd}}
\newcommand{\Trd}{\operatorname{Trd}}
\newcommand{\tens}{\otimes}
\newcommand{\st}{\,\big\vert\,}
\newcommand{\sbr}{\smallbreak}
\newcommand{\mbr}{\medbreak}
\newtheorem{thm}{Theorem}[section]
\newtheorem{cor}[thm]{Corollary}
\newtheorem{lem}[thm]{Lemma}
\newtheorem{prop}[thm]{Proposition}
\theoremstyle{remark}
\newtheorem{rem}[thm]{Remark}
\newtheorem{rems}[thm]{Remarks}
\newtheorem{defin}[thm]{Definition}
\newcommand{\Fr}{\operatorname{Fr}}
\newcommand{\Gal}{\operatorname{Gal}}
\newcommand{\ql}{\overline{\bQ}_\ell}
\newcommand{\qls}{\overline{\bQ}_\ell^\times}
\newcommand{\bfq}{\overline{\bF}_q}
\newcommand{\fqn}{\bF_{q^n}}
\newcommand{\rec}{\operatorname{rec}}
\newcommand{\ceil}[1]{\lceil #1\rceil}
\begin{document}

\title[Special cases of LLC and JLC]{Geometric realization of special cases of local Langlands and Jacquet-Langlands correspondences}

\author[M.~Boyarchenko and J.~Weinstein]{Mitya Boyarchenko and Jared Weinstein}

\begin{abstract}
Let $F$ be a local non-Archimedean field, let $E\supset F$ be an unramified extension of degree $n\geq 2$ and let $\te$ be a smooth character of $E^\times$ such that $\te$ has level $r_0\geq 2$ and for each $1\neq\ga\in\Gal(E/F)$, the character $\te/\te^\ga$ has level $r_0$ as well (so $(E^\times,\te)$ is a minimal admissible pair in the terminology of $p$-adic representation theory). To $\te$ one associates a smooth irreducible $n$-dimensional representation $\sg_\te$ of the Weil group $\cW_F$ of $F$. It corresponds to an irreducible supercuspidal representation $\pi$ of $GL_n(F)$ via the local Langlands correspondence, which in turn corresponds to an irreducible representation $\rho$ of $D^\times$ via the Jacquet-Langlands correspondence, where $D$ is the central division algebra over $F$ with invariant $1/n$. In this note we give an explicit construction of $\pi$ and $\rho$. The result itself is not new: the proof that $\pi$ corresponds to $\sg_\te$ is a simple application of a more general result of Henniart, while the proof that $\rho$ corresponds to $\pi$ is almost identical to the proof of another result of Henniart, which was restricted to the case where $n$ is prime. However, our construction of $\pi$ and $\rho$ employs a new geometric ingredient (related to affinoid subspaces in the Lubin-Tate tower of $F$ found by the second author) that allows us to completely avoid the use of the Weil representation over finite fields that is required for the more algebraic approaches, and hence to simplify and streamline the key arguments. We also included some important details that already exist in other sources, in the hope that our text may be useful to those who are entering this research area for the first time.
\end{abstract}

\maketitle

\setcounter{tocdepth}{1}

\tableofcontents

\vfill\newpage

\section{Some results of Henniart and Kazhdan}

\subsection{Notation and terminology}\label{ss:notation} We use `LLC' and `JLC' as abbreviations for ``local Langlands correspondence for the group $GL_n$'' and ``(local) Jacquet-Langlands correspondence.'' We assume the existence and basic properties of the LLC and the JLC (notably, their compatibility with twists by $1$-dimensional representations).

\mbr

The following notation will be used throughout this text. We fix a nondiscrete locally compact non-Archimedean field $F$, a uniformizer $\varpi\in F$, an integer $n\geq 2$ and an unramified extension $E\supset F$ of degree $n$. The rings of integers of $F$ and $E$ will be denoted by $\cO_F$ and $\cO_E$, respectively. For any $m\in\bZ$, we write $P_E^m=\varpi^m\cdot\cO_E$; if $m\geq 1$, we also put $U^m_E=1+P_E^m\subset\cO_E^\times$. We let $\sG=\Gal(E/F)$.

\mbr

We denote the residue fields of $F$ and $E$ by $\bF_q$ and $\fqn$, respectively; whenever convenient we will identify $\sG$ with $\Gal(\fqn/\bF_q)$. We write $p=\operatorname{char}(\bF_q)$.

\mbr

Let $\vp\in\sG$ be the canonical generator, inducing the map $a\mapsto a^q$ on the residue field $\bF_{q^n}$. Consider the twisted polynomial ring $E\langle\Pi\rangle$ defined by the commutation relation $\Pi\cdot a=\vp(a)\cdot\Pi$ for all $a\in E$ and define $D=E\langle\Pi\rangle/(\Pi^n-\varpi)$. Then $D$ is a central division algebra over $F$ with invariant $1/n$. We also introduce the algebra $A=\End_F(E)\cong Mat_n(F)$ of all $F$-vector space endomorphisms of $E$, and we write $G=A^\times=GL_F(E)\cong GL_n(F)$. We view $E^\times$ as a subgroup of both $G$ and $D^\times$.

\mbr

We let $\cW_F$ and $\cW_E$ be the Weil groups of $E$ and $F$, respectively, and identify $\cW_E$ with an open normal subgroup of $\cW_F$ in the usual way (then $\cW_F/\cW_E$ is identified with $\sG$). We let $\rec_F:\cW_F\rar{}F^\times$ denote the continuous surjective homomorphism that induces the local class field theory isomorphism $\cW_F^{ab}\rar{\simeq}F^\times$, normalized in such a way that $\rec_F(\Phi)$ is a uniformizer in $F$ for any \emph{geometric} Frobenius element $\Phi\in\cW_F$. We also have the corresponding homomorphism $\rec_E:\cW_E\rar{}E^\times$.

\mbr

We fix a prime $\ell\neq p$ and an algebraic closure $\ql$ of $\bQ_\ell$. We will use $\ql$ as the coefficient field for all representations and adic sheaves considered in this text. In particular, by a \emph{character} of a topological group $H$ we will always mean a smooth homomorphism (i.e., a homomorphism with open kernel) $H\rar{}\qls$.

\mbr

If $\te$ is a character of $E^\times$, the \emph{level} of $\te$ is the smallest integer $r\geq 0$ such that $\te\bigl\lvert_{U^r_E}\equiv 1$, where $U^0_E=\cO_E^\times$ and $U^r_E$ is defined above for $r\geq 1$. The group $\sG$ acts on the set of all characters of $E^\times$: given $\ga\in\sG$, we write $\te^\ga(x)=\te(\ga(x))$. From now on we fix an integer $r_0\geq 2$. We will say that a character $\te$ of $E^\times$ of level $r_0$ is \emph{primitive} if $\te/\te^\ga$ has level $r_0$ for each $1\neq\ga\in\sG$. This implies that $\te$ has trivial stabilizer in $\sG$, so that $(E^\times,\te)$ is an admissible pair in Howe's terminology \cite{Howe-tame-supercuspidals-GL_n}.

\mbr

Finally, we fix a character $\eps$ of $F^\times$ whose kernel is equal to the image of the norm $N_{E/F}:E^\times\rar{}F^\times$. In particular, $\eps$ has order $n$ in the group $\Hom(F^\times,\qls)$.

\vfill\newpage

\subsection{Some special cases of the LLC}\label{ss:special-cases-LLC} Let $\sX$ denote the set of all characters of $E^\times$ that have trivial stabilizer in $\sG$. If $\te\in \sX$, then
\[
\sg_\te:=\Ind_{\cW_E}^{\cW_F} (\te\circ\rec_E)
\]
is a smooth irreducible $n$-dimensional representation of $\cW_F$. It is easy to prove

\begin{lem}\label{l:specify-Galois-side}
The map $\te\mapsto\sg_\te$ induces a bijection between the set of orbits $\sG\backslash\sX$ for the $\sG$-action on $\sX$ and the set $\cG_F^\eps(n)$ of isomorphism classes of smooth irreducible $n$-dimensional representations $\sg$ of $\cW_F$ that satisfy $\sg\cong\sg\tens(\eps\circ\rec_F)$.
\end{lem}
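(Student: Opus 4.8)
The plan is to establish three things separately: that $\te\mapsto\sg_\te$ is well defined on $\sG$-orbits and injective there; that its image lies in $\cG_F^\eps(n)$; and that its image is all of $\cG_F^\eps(n)$. Throughout, the only tools I expect to need are the Mackey machine for the finite-index normal subgroup $\cW_E\subset\cW_F$ (recall $\cW_F/\cW_E\cong\sG$ is cyclic of order $n$), together with two standard functorialities of the reciprocity map: that $\rec_E\colon\cW_E^{\mathrm{ab}}\iso E^\times$ is $\sG$-equivariant, and that the square built from $\rec_E$, $\rec_F$, the inclusion $\cW_E\hookrightarrow\cW_F$ and the norm $N_{E/F}$ commutes.

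For the first two points I anticipate only formal arguments. Using $\rec_E$ to identify characters of $E^\times$ with characters of $\cW_E$ $\sG$-equivariantly, one sees that $\te\in\sX$ exactly when $\te\circ\rec_E$ has trivial $\sG$-stabilizer, which by the Mackey irreducibility criterion is exactly when $\sg_\te$ is irreducible; and Frobenius reciprocity together with the restriction formula $\Res_{\cW_E}\sg_{\te'}\cong\bigoplus_{\ga\in\sG}(\te'^\ga)\circ\rec_E$ shows $\Hom_{\cW_F}(\sg_\te,\sg_{\te'})\neq 0$ iff $\te'\in\sG\cdot\te$, hence (both sides being irreducible) $\sg_\te\cong\sg_{\te'}$ iff $\te'\in\sG\cdot\te$. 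For the image, set $\psi=\eps\circ\rec_F$: since $\ker\eps=N_{E/F}(E^\times)$, the commuting square forces $\psi|_{\cW_E}$ to be trivial, so $\psi$ is inflated from a character of $\sG$, and since $\eps$ has order $n$ and $\sG$ is cyclic of order $n$ this character generates the character group of $\sG$; the projection formula then gives $\sg_\te\otimes\psi\cong\Ind_{\cW_E}^{\cW_F}\big((\te\circ\rec_E)\otimes\psi|_{\cW_E}\big)=\sg_\te$, so $\sg_\te\in\cG_F^\eps(n)$.

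The real content is surjectivity onto $\cG_F^\eps(n)$, and here I would argue directly from the isomorphism $\sg\otimes\psi\cong\sg$. Choose a linear automorphism $T$ of $\sg$ with $T\sg(w)T^{-1}=\psi(w)\sg(w)$ for all $w\in\cW_F$; then $T^n$ commutes with $\cW_F$, hence is a scalar by Schur's lemma, and after rescaling $T^n=\id$. Decompose $\sg=\bigoplus_{\zeta\in\mu_n}\sg_\zeta$ into $T$-eigenspaces; the intertwining relation gives $\sg(w)\sg_\zeta=\sg_{\psi(w)\zeta}$, so $\cW_E=\ker\psi$ stabilizes each $\sg_\zeta$ while $\cW_F$ permutes the $\sg_\zeta$ transitively (as $\psi$ surjects onto $\mu_n$). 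Irreducibility of $\sg$ then forces every $\sg_\zeta$ to be nonzero, hence $\dim\sg_\zeta=1$; picking coset representatives $w_i$ of $\cW_E$ in $\cW_F$ with distinct values $\psi(w_i)$ one gets $\sg=\bigoplus_i\sg(w_i)\sg_1$ with $\sg_1$ a $\cW_E$-stable line, i.e. $\sg\cong\Ind_{\cW_E}^{\cW_F}\sg_1$. Finally $\sg_1$, a character of $\cW_E$, equals $\te\circ\rec_E$ for a character $\te$ of $E^\times$, so $\sg\cong\sg_\te$; and the Mackey criterion applied to the irreducible $\sg$, together with $\sG$-equivariance of $\rec_E$, shows $\te$ has trivial stabilizer in $\sG$, i.e. $\te\in\sX$.

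The only step I expect to require genuine care is the eigenspace decomposition in the surjectivity argument; everything else is bookkeeping with Mackey theory and the functorial properties of class field theory. One could instead deduce surjectivity by quoting the standard fact that an irreducible representation of $\cW_F$ invariant under twisting by a character cutting out a cyclic extension $E/F$ is induced from $\cW_E$, but the self-contained argument above seems short enough to spell out.
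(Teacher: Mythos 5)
Your argument is correct, and since the paper itself offers no proof of Lemma \ref{l:specify-Galois-side} (it is dismissed with ``It is easy to prove''), there is nothing to compare it against; what you wrote is precisely the standard argument the authors had in mind. The three ingredients you use --- Mackey theory for the normal subgroup $\cW_E\subset\cW_F$ together with the $\sG$-equivariance of $\rec_E$ (giving well-definedness, irreducibility, and injectivity on orbits), the compatibility of $\rec_F$ with $N_{E/F}\circ\rec_E$ plus the projection formula (placing the image in $\cG_F^\eps(n)$), and the eigenspace decomposition for an intertwiner $T$ realizing $\sg\cong\sg\tens(\eps\circ\rec_F)$ (giving surjectivity, via the fact that twist-invariance under a character with kernel $\cW_E$ forces $\sg$ to be induced from a character of $\cW_E$) --- are all carried out correctly, including the needed minor points (rescaling so $T^n=\id$, $\ker(\eps\circ\rec_F)=\cW_E$, and equality of the eigenspace dimensions).
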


Let $\cA_F^\eps(n)$ denote the set of isomorphism classes of irreducible supercuspidal representations $\pi$ of the group $G=A^\times\cong GL_n(F)$ such that $\pi\cong\pi\tens(\eps\circ\det_A)$, where $\det_A:A^\times\rar{}F^\times$ is the usual determinant. Because the LLC is compatible with twists by characters of $F^\times$, it must restrict to a bijection between $\cG_F^\eps(n)$ and $\cA_F^\eps(n)$. In fact, the existence of a canonical bijection $\cG_F^\eps(n)\rar{\simeq}\cA_F^\eps(n)$ was known before the LLC was proved in general, and is a special case of a result of Kazhdan \cite{Kazhdan-on-lifting}; the results of \cite{Henniart-LLC-cyclic-Ann-Math} imply that the bijection found by Kazhdan is a restriction of the general LLC. Kazhdan's proof does not yield an \emph{explicit} construction of the bijection $\cG_F^\eps(n)\rar{\simeq}\cA_F^\eps(n)$; such a construction was found by Henniart in \cite{Henniart-MathNachr1992}. We now state the main result of \emph{op.~cit.} in the form in which we will use it.

\mbr

For each $\te\in\sX$, there is a purely algebraic construction of an irreducible supercuspidal representation $\pi_\te$ of $G$. Many special cases of this construction were found by Howe \cite{Howe-tame-supercuspidals-GL_n}; the general case is due to G\'erardin \cite{gerardin}, see also \cite{Henniart-MathNachr1992}.

\begin{rem}
Our usage of the notation $\pi_\te$ agrees with \cite{Henniart-JLC-I} but not with \cite{Henniart-MathNachr1992}.
\end{rem}

If $\pi$ is any smooth irreducible representation of $G$, its character (viewed as a generalized function on $G$) is represented by an ordinary locally constant function on the set of regular semisimple elements of $G$. By a slight abuse of notation, we denote this function simply by $g\mapsto\tr\pi(g)$.

\begin{thm}[Henniart]\label{t:Henniart-LLC}
Fix $\te\in\sX$.

\begin{enumerate}[$($a$)$]
\item The representations $\sg_\te\in\cG_F^\eps(n)$ and $\pi_{\xi\te}\in\cA_F^\eps(n)$ correspond to each other under the LLC, where $\xi$ is the character of $E^\times$ determined by $\xi(\varpi)=(-1)^{n-1}$ and $\xi\bigl\lvert_{\cO_E^\times}=1$ $($in particular, $\xi$ has order $1$ if $n$ is odd and $2$ if $n$ is even$)$.
 \sbr
\item There exists a constant $c=\pm 1$ such that
\[
\tr\pi_\te(x) = c\cdot\sum_{\ga\in\sG} \te^\ga(x)
\]
for every very regular element $x\in\cO_E^\times\subset G$.
\end{enumerate}
\end{thm}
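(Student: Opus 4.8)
The plan is to establish (b) by a direct character computation with the explicit model of $\pi_\te$ and then to deduce (a) from (b) together with the known compatibility of the LLC with automorphic induction for cyclic extensions; the computational heart of the matter is (b). Recall the shape of $\pi_\te$: since $\te$ has trivial $\sG$-stabilizer, $E^\times$, embedded in $G=GL_F(E)$ by multiplication, is an elliptic maximal torus, and the construction of Howe and G\'erardin realizes $\pi_\te$ as a compact induction $\pi_\te\cong\ind_{J}^{G}\La$, where $J\subset G$ is open and compact modulo the centre, contains $E^\times$ together with a filtration subgroup $1+\fP^m$ of the maximal $\cO_F$-order $\fA=\End_{\cO_F}(\cO_E)\subset A$, and $\La$ is an irreducible representation of $J$ whose restriction to $E^\times$ is $\te$-isotypic --- when $r_0$ is odd one takes $m=\tfrac{r_0+1}{2}$ and $\La\restr{E^\times}=\te$, while when $r_0$ is even, $\La$ is assembled from $\te$ by a Heisenberg--Weil construction on the symplectic $\bF_q$-space $(1+\fP^{r_0/2})/(1+\fP^{r_0/2+1})$ and $\La\restr{E^\times}$ is a multiple of $\te$. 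One notes, directly from this model, that $\pi_\te$ has central character $\te\restr{F^\times}$ and conductor exponent $nr_0$, data depending on $\te$ alone.

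For (b) apply the Mackey/Frobenius character formula for a compact induction: for regular semisimple $x\in G$ one has $\tr\pi_\te(x)=\sum\tr\La(g^{-1}xg)$, the sum over cosets $gJ\in J\backslash G$ with $g^{-1}xg\in J$. The key point is that the \emph{very regular} hypothesis on $x\in\cO_E^\times$ forces this set of cosets to be precisely the image of $N_G(E^\times)$ in $J\backslash G$, of cardinality $n=\abs{\sG}$: if $g^{-1}xg\in J$, then $g^{-1}xg$ is regular with reduction mod the maximal ideal a generator of $\fqn$ over $\bF_q$, so a Skolem--Noether argument inside $\fA$ shows $g^{-1}xg$ is conjugate by an element of $N_G(E^\times)(1+\fP^m)$ to an element of $E^\times$, necessarily of the form $\ga(x)$ with $\ga\in\sG$; since the $G$-centralizer of the regular element $x$ is $E^\times\subset J$, this gives $g\in N_G(E^\times)\cdot J$, and conversely every element of $N_G(E^\times)$ conjugates $x$ into $J$ (it normalizes both $E^\times$ and $1+\fP^m$, hence $J$). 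Choosing representatives $g_\ga$ for $\ga\in\sG\cong N_G(E^\times)/E^\times$, one is reduced to showing $\tr\La(g_\ga^{-1}xg_\ga)=c\cdot\te^\ga(x)$ with $c$ independent of $\ga$ and of the very regular $x$: for $r_0$ odd this is immediate from $\La\restr{E^\times}=\te$, with $c=1$; for $r_0$ even it reduces to the value, at the image of $\Ad(x)$ acting on the symplectic space, of the character of the Weil representation entering $\La$, and since $\Ad(x)-1$ acts invertibly there this value is a root of unity that the Weil-index normalization pins to $\pm1$. This two-step bookkeeping --- cutting the sum down to the $n$ normalizer cosets and evaluating the Weil character --- is the step I expect to be the main obstacle.

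For (a), the principal input is the main theorem of \cite{Henniart-LLC-cyclic-Ann-Math}: the LLC is compatible with automorphic induction along the cyclic extension $E/F$, so the representation of $G$ attached by the LLC to $\sg_\te=\Ind_{\cW_E}^{\cW_F}(\te\circ\rec_E)$ is $\Pi:=\operatorname{AI}_{E/F}(\te)$, the automorphic induction of the character $\te$ of $E^\times=GL_1(E)$. Now $\Pi$ is characterized, among irreducible supercuspidal representations of $G$, by the Deligne--Kazhdan character identity $\tr\Pi(x)=(-1)^{n-1}\sum_{\ga\in\sG}\te^\ga(x)$ --- which for representations attached to $E^\times$ it suffices to test on very regular $x\in\cO_E^\times$ --- together with its central character $\omega_\Pi=(\te\restr{F^\times})\cdot\eta$, where $\eta$ is the character of $F^\times$ corresponding under $\rec_F$ to $\det\Ind_{\cW_E}^{\cW_F}(\vone)$: the sign character of the $\sG$-permutation representation, which is unramified with $\eta(\varpi)=(-1)^{n-1}$. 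Since $\xi$ is $\sG$-invariant, trivial on $\cO_E^\times$, and satisfies $\xi\restr{F^\times}=\eta$, the character $\xi\te$ again lies in $\sX$, part (b) applied to it gives $\tr\pi_{\xi\te}(x)=c'\sum_\ga\te^\ga(x)$ on very regular $x\in\cO_E^\times$ with $c'=\pm1$, and $\omega_{\pi_{\xi\te}}=(\xi\te)\restr{F^\times}=(\te\restr{F^\times})\cdot\eta=\omega_\Pi$. Comparing conductor exponents (both equal $nr_0$) shows that any discrepancy between $\pi_{\xi\te}$ and $\Pi$ is at worst an unramified twist, which the central-character match excludes; this simultaneously forces $c'=(-1)^{n-1}$, whence $\pi_{\xi\te}\cong\Pi=\mathrm{LLC}(\sg_\te)$, i.e.\ (a). The genuinely delicate point on this side is just this last normalization --- that the correcting twist is \emph{exactly} $\xi$ --- everything else being formal once \cite{Henniart-LLC-cyclic-Ann-Math} is granted.
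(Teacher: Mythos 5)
Your route is genuinely different from the paper's: there, part (a) is taken directly from the Theorem of \cite[\S1.5]{Henniart-MathNachr1992}, and part (b) is obtained by writing $\pi_\te\cong\Ind_{F^\times\cO_A^\times}^G(\sg)$, using the Frobenius character formula together with Lemma \ref{l:conjugate-very-regular} to reduce $\tr\pi_\te(x)$ to $\tr\sg(x)$ for very regular $x$, and then quoting \cite[\S3.14]{Henniart-MathNachr1992} for $\tr\sg(x)=c\cdot\sum_\ga\te^\ga(x)$. You propose to recompute all of this, and the two points your sketch leaves unproved are exactly the ones the paper declines to redo. In (b), first, the parities are reversed: for $GL_n$ with $E/F$ unramified, when $r_0$ is \emph{even} the character $\te_A$ already extends to a one-dimensional representation of $E^\times(1+\fP_A^{r_0/2})$ (commutators of $1+\fP_A^{r_0/2}$ land in $1+\fP_A^{r_0}$), and it is when $r_0$ is \emph{odd} that one must descend to $E^\times(1+\fP_A^{(r_0-1)/2})$ and the Heisenberg--Weil construction on $(1+\fP_A^{(r_0-1)/2})/(1+\fP_A^{(r_0+1)/2})$ appears (compare Theorem \ref{t:GL-n} and the remarks on when the standard construction needs the Weil representation). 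Second, and more seriously, the assertion that the Weil-type character value at $\Ad(x)$ is a sign \emph{independent of the very regular $x$ and of $\ga$} is not a normalization fact one can wave at: the value is essentially a quadratic symbol attached to $\det(\Ad(x)-1)$ on the symplectic space, and its constancy in $x$ is the substance of Henniart's \S3.14 theorem. This is precisely the computation the present paper is structured to avoid (by geometry, for its own construction) or to cite (for $\pi_\te$), so leaving it as an assertion is a genuine gap in the computational heart of your (b).

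For (a), granting compatibility of the LLC with automorphic induction along $E/F$, your identification of $\pi_{\xi\te}$ with $\Pi=\mathrm{AI}_{E/F}(\te)$ does not go through as written. The step ``equal conductor exponent and equal central character $\Rightarrow$ the discrepancy is at worst an unramified twist'' is false for supercuspidal representations of $GL_n$: many non-isomorphic supercuspidals share both invariants, so nothing this soft can force $\pi_{\xi\te}\cong\Pi$, pin the correcting twist to be exactly $\xi$, or force $c'=(-1)^{n-1}$. The honest version of the alternative you gesture at --- that such a representation is determined by the trace identity tested only on very regular units, with an unspecified nonzero constant --- is the linear-independence argument of Lemmas \ref{l:Henniart-trick} and \ref{l:Henniart-trick-complicated}, which requires either that $\te/\te^\ga$ have level $\geq 2$ for all $\ga\neq 1$ or the exclusion of the $n=2$, $q=3$ configuration together with control of the constants (the remark following Lemma \ref{l:Henniart-trick-complicated} gives an explicit counterexample otherwise). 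Since Theorem \ref{t:Henniart-LLC} is asserted for \emph{every} $\te\in\sX$, including characters whose restriction to $U^1_E$ is $\sG$-invariant, your argument does not cover the stated generality, whereas the paper's citation of Henniart does.
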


Here, an element $x\in\cO_E^\times$ is called \emph{very regular} if the image of $x$ in $\fqn^\times=\cO_E^\times/U^1_E$ has trivial $\sG$-stabilizer. The constant $c$ is determined explicitly in terms of $\te$ in \cite[\S3.14]{Henniart-MathNachr1992}; we will not need the formula for $c$ in what follows.

\mbr

The proof of Theorem \ref{t:Henniart-LLC} consists of several references to the article \cite{Henniart-MathNachr1992}. We first introduce some notation and recall a result that will be useful for us later.

\mbr

Let $\cO_A=\End_{\cO_F}(\cO_E)\cong Mat_n(\cO_F)$ be the ring of $\cO_F$-module endomorphisms of $\cO_E$. We identify $\cO_A$ with an $\cO_F$-subalgebra of $A$, so that $\cO_A^\times$ becomes a maximal compact open subgroup of $G=A^\times$. The center of $G$ equals $F^\times\subset A^\times$.

\begin{lem}\label{l:conjugate-very-regular}
If $x\in\cO_E^\times$ is very regular and $g\in G$ is such that $gxg^{-1}\in F^\times\cdot\cO_A^\times$, then $g\in F^\times\cdot\cO_A^\times$.
\end{lem}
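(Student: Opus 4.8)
The plan is to exploit the fact that a very regular element $x\in\cO_E^\times$ generates $E$ as an $F$-algebra, so that its centralizer in $A\cong \End_F(E)$ is exactly $E$ itself, and then to track how the $\cO_E$-lattice structure interacts with conjugation by $g$. First I would normalize: replacing $g$ by a scalar multiple and $x$ by the same, we may assume $gxg^{-1}\in\cO_A^\times$, i.e. $gxg^{-1}$ preserves the lattice $\cO_E\subset E$ (and is invertible on it), while we want to conclude $g\in F^\times\cdot\cO_A^\times$. Equivalently, writing $L=g^{-1}(\cO_E)$ — which is an $\cO_F$-lattice in $E$ — the hypothesis says precisely that $x\cdot L\subseteq L$ and $x^{-1}\cdot L\subseteq L$, i.e. $L$ is stable under the subring $\cO_F[x,x^{-1}]\subseteq E$. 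The goal becomes: any such lattice $L$ is of the form $a\cdot\cO_E$ for some $a\in E^\times$, for then $g^{-1}(\cO_E)=a\cO_E$ forces $g^{-1}\in E^\times\cdot\cO_A^\times$ (both send $\cO_E$ to $a\cO_E$), hence $g\in E^\times\cdot\cO_A^\times\subseteq F^\times\cdot\cO_A^\times\cdot(\cO_E^\times)\subseteq F^\times\cdot\cO_A^\times$ after absorbing the unit $a/\varpi^{\val(a)}$ — one must be slightly careful here, but since $\cO_E^\times\subset\cO_A^\times$ and the scalar $\varpi^{\val(a)}\in F^\times$, this is clean.

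So the crux is the lattice-chasing step: an $\cO_F$-lattice $L\subset E$ stable under $\cO_F[x,x^{-1}]$ is a fractional $\cO_E$-ideal, hence of the form $\varpi^m\cO_E$ (as $E/F$ is unramified and $\cO_E$ is a PID, indeed a DVR). The key point is that the $\cO_F$-subalgebra $R=\cO_F[x]\subseteq\cO_E$ generated by a very regular $x$ is in fact all of $\cO_E$. This is where very regularity is used: the image $\bar x$ of $x$ in $\fqn=\cO_E/\varpi\cO_E$ has trivial $\sG$-stabilizer, which means $\bar x$ is not contained in any proper subfield of $\fqn$ containing $\bF_q$, so $\bF_q[\bar x]=\bF_q(\bar x)=\fqn$. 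Thus $R+\varpi\cO_E=\cO_E$, and since $\cO_E$ is a finitely generated $\cO_F$-module and $\varpi$ lies in the Jacobson radical (in fact the maximal ideal) of $\cO_F$, Nakayama's lemma gives $R=\cO_E$. Consequently $L$ is an $\cO_E$-submodule of $E$, i.e. an $\cO_E$-fractional ideal, so $L=\varpi^m\cO_E$ for some $m\in\bZ$; absorbing $\varpi^m\in F^\times$ finishes the argument.

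I expect the main (though modest) obstacle to be the bookkeeping around scalars: making sure that after the initial reduction $gxg^{-1}\in\cO_A^\times$ and after identifying $L=g^{-1}(\cO_E)=\varpi^m\cO_E$, one correctly recovers $g\in F^\times\cdot\cO_A^\times$ rather than merely $g\in E^\times\cdot\cO_A^\times$ — but this is automatic since $\cO_E^\times\subseteq\cO_A^\times$ and the only genuinely non-$\cO_A^\times$ part of an element of $E^\times$ is its $\varpi$-power, which is central. A secondary point to state carefully is that the centralizer of $x$ in $A$ equals $E$; this follows because $x$, being very regular, is in particular regular semisimple with $F[x]=E$ a maximal (étale, indeed field) subalgebra of $A\cong\Mat_n(F)$, so its commutant is $E$ — though in the argument above I actually only need the weaker integral statement $\cO_F[x]=\cO_E$ rather than the commutant computation, so I would lead with Nakayama and keep the commutant remark as a sanity check.
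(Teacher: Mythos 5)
Your main line of argument is the one the paper uses: the lattice $L=g^{-1}(\cO_E)$ is stable under multiplication by $x$, hence under $\cO_F[x]=\cO_E$, hence $L=\varpi^m\cO_E$ for some $m$, which gives $g\in F^\times\cdot\cO_A^\times$. Your Nakayama justification of $\cO_F[x]=\cO_E$ (via $\bF_q[\bar{x}]=\fqn$, which is exactly what very regularity gives) is a correct expansion of a step the paper only asserts.

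The one step that fails as you justify it is the opening normalization. Replacing $g$ by a scalar multiple does not change $gxg^{-1}$ at all, and replacing $x$ by $\lambda^{-1}x$ for a non-unit $\lambda\in F^\times$ destroys exactly the hypotheses your argument then relies on: the rescaled element is no longer a very regular element of $\cO_E^\times$, so neither the stability statement $x^{\pm 1}L\subseteq L$ nor $\cO_F[x]=\cO_E$ survives. So you cannot reduce to $gxg^{-1}\in\cO_A^\times$ by rescaling; you must \emph{prove} that the $F^\times$-factor is automatically a unit. This is a one-line determinant argument, and it is how the paper proceeds: $\det_A(gxg^{-1})=\det_A(x)=N_{E/F}(x)\in\cO_F^\times$, whereas $\val\bigl(\det_A(\lambda u)\bigr)=n\cdot\val(\lambda)$ for $\lambda\in F^\times$, $u\in\cO_A^\times$; hence $\lambda\in\cO_F^\times$ and $gxg^{-1}\in\cO_A^\times$ with $g$ and $x$ untouched. (Equivalently, from $x(L)=\lambda\cdot L$ a covolume comparison forces $\lambda\in\cO_F^\times$, since multiplication by $x$ has unit determinant.) With this patch the remainder of your proof, including the final bookkeeping $E^\times=\varpi^{\bZ}\cdot\cO_E^\times\subset F^\times\cdot\cO_A^\times$ in the unramified case, is correct and coincides with the paper's.
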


This lemma is well known and goes back to at least \cite{Carayol-representations-cuspidales}.

\begin{proof}
As $x$ is very regular, $\cO_E=\cO_F[x]$. Since $\det_A(gxg^{-1})=\det_A(x)\in\cO_F^\times$, we have $gxg^{-1}\in\cO_A^\times$. So the $\cO_F$-lattice $g^{-1}(\cO_E)\subset E$ is stable under $x$, and hence also under $\cO_F[x]=\cO_E$. Thus $g^{-1}(\cO_E)=P_E^m$ for some $m\in\bZ$, i.e., $g\in F^\times\cdot\cO_A^\times$.
\end{proof}

\begin{proof}[Proof of Theorem \ref{t:Henniart-LLC}]
Part (a) is equivalent to the Theorem stated in \cite[\S1.5]{Henniart-MathNachr1992}. To prove (b), recall from \emph{op.~cit.} that the construction of the representation $\pi_\te$ is such that $\pi_\te\cong\Ind_H^G(\sg)$ for some smooth irreducible representation $\sg$ of $H:=F^\times\cdot\cO_A^\times\subset G$ (it does not matter whether we use $\Ind$ or $c-\Ind$ in the last formula). The Frobenius character formula \cite[Thm.~A2]{Henniart-MathNachr1992} (see also \S\ref{s:Frobenius-character-formula} below) together with Lemma \ref{l:conjugate-very-regular} imply that $\tr\pi_\te(x)=\tr\sg(x)$ for each very regular element $x\in\cO_E^\times$. By the Theorem stated in \cite[\S3.14]{Henniart-MathNachr1992}, there exists a constant $c=\pm 1$ such that $\tr\sg(x) = c\cdot\sum_{\ga\in\sG} \te^\ga(x)$ for each such $x$, which completes the proof.
\end{proof}

\subsection{The strategy of our approach} In \S\ref{s:main-results} below we present a novel construction that to every primitive character $\te$ of $E^\times$ of level $r_0\geq 2$ associates an irreducible supercuspidal representation $\pi$ of $G$ and a smooth irreducible representation $\rho$ of $D^\times$. This construction involves a geometric ingredient that allows us to bypass the technical difficulties that arise in the more algebraic approaches. (To be more precise, our construction of $\pi$ (resp.~$\rho$) is only ``new'' in the case where $r_0$ is odd (resp. $r_0$ and $n$ are both even),
which is exactly when the more standard constructions rely on the Weil representation over a finite field.) We must then prove that $\pi$ and $\rho$ correspond to each other under the JLC, and that $\pi$ corresponds to $\sg_{\xi\te}$ under the LLC (where $\xi$ and $\sg_\te$ are defined in Theorem \ref{t:Henniart-LLC} and \S\ref{ss:special-cases-LLC}). A direct comparison with the constructions used in \cite{Henniart-MathNachr1992} and \cite{Henniart-JLC-I} is certainly possible, although it would involve exactly
the technical difficulties we wish to avoid. Therefore we prefer a more conceptual approach based on ideas we learned from \emph{op.~cit.}

\begin{prop}\label{p:Henniart-trick}
Let $\te$ be a character of $E^\times$ such that $\te/\te^\ga$ has level $\geq 2$ for each $1\neq\ga\in\sG$ $($in particular, $\te\in\sX${}$)$.

\begin{enumerate}[$($a$)$]
\item If $\pi$ is an irreducible supercuspidal representation of $G$ with central character $\te\bigl\lvert_{F^\times}$ such that $\pi\cong\pi\tens(\eps\circ\det_A)$ and such that there exists a constant $c'\neq 0$ satisfying $\tr\pi(x)=c'\cdot\sum_{\ga\in\sG} \te^\ga(x)$ for each very regular element $x\in\cO_E^\times$, then $\pi$ corresponds to $\sg_{\xi\te}$ under the LLC.
 \sbr
\item Let $\rho$ be a smooth irreducible representation of $D^\times$ with central character $\te\bigl\lvert_{F^\times}$ such that $\rho\cong\rho\tens(\eps\circ\Nrd_{D/F})$, where $\Nrd_{D/F}:D^\times\rar{}F^\times$ is the reduced norm, and such that there exists a constant $c''\neq 0$ satisfying $\tr\rho(x)=c''\cdot\sum_{\ga\in\sG} \te^\ga(x)$ for each very regular element $x\in\cO_E^\times$. If $\pi$ is the representation of $G$ satisfying the hypotheses of part (a), then $\pi$ corresponds to $\rho$ under the JLC.
\end{enumerate}
\end{prop}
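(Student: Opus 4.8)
The plan is to deduce both parts from Theorem \ref{t:Henniart-LLC} together with a rigidity property: inside $\cA_F^\eps(n)$, the representation attached to $\sg_{\xi\te}$ is singled out by its central character and by the values of its character on very regular elements. For part (a), first note that $\te\in\sX$ (level $\geq 1$ of $\te/\te^\ga$ forces trivial stabilizer), that $\xi\te\in\sX$ as well ($\xi$ is unramified and $\sG$-invariant), and that $\xi^2=1$ because $\xi(\varpi)=(-1)^{n-1}$ and $\xi|_{\cO_E^\times}=1$. Applying Theorem \ref{t:Henniart-LLC}(a) with $\xi\te$ in place of $\te$ shows that $\pi_{\xi\cdot(\xi\te)}=\pi_\te$ corresponds to $\sg_{\xi\te}$ under the LLC, so in particular $\pi_\te\in\cA_F^\eps(n)$. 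Combining Lemma \ref{l:specify-Galois-side}, the Kazhdan bijection $\cG_F^\eps(n)\rar{\simeq}\cA_F^\eps(n)$ and Theorem \ref{t:Henniart-LLC}(a) (reindexed via $\psi\mapsto\xi\psi$), one obtains a bijection $\sG\backslash\sX\rar{\simeq}\cA_F^\eps(n)$, $[\psi]\mapsto\pi_\psi$, under which $\pi_\psi$ corresponds to $\sg_{\xi\psi}$. Hence the given $\pi$ is isomorphic to $\pi_\psi$ for some $\psi\in\sX$, and it suffices to prove $[\psi]=[\te]$.

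To pin down $[\psi]$, compare traces on very regular elements. By Theorem \ref{t:Henniart-LLC}(b), $\tr\pi_\psi(x)=c_\psi\sum_{\ga\in\sG}\psi^\ga(x)$ with $c_\psi=\pm1$, so the hypothesis on $\pi$ gives $c_\psi\sum_\ga\psi^\ga(x)=c'\sum_\ga\te^\ga(x)$ for every very regular $x\in\cO_E^\times$. Write $\cO_E^\times=\mu\times U^1_E$ with $\mu$ the group of prime-to-$p$ roots of unity; the very regular elements are exactly $S\times U^1_E$, where $S\subset\mu$ consists of the elements whose image in $\fqn^\times$ generates $\fqn$ over $\bF_q$. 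Grouping the characters $\{\psi^\ga\}\cup\{\te^\ga\}$ of $\cO_E^\times$ according to their restriction to $U^1_E$ and using linear independence of characters of $U^1_E$, the identity splits into one identity per $U^1_E$-component, each between characters of $\mu$ restricted to $S$. The hypothesis that $\te/\te^\ga$ has level $\geq 2$ makes the $n$ characters $\te^\ga|_{U^1_E}$ pairwise distinct; granting the corresponding statement for $\psi$ (the delicate point, discussed below), each $U^1_E$-component occurs for at most one $\psi^\ga$ and one $\te^\ga$, and since a character of $\mu$ is nowhere zero on $S$ a component occurring for a single character is impossible, so $\{\psi^\ga|_{U^1_E}\}=\{\te^\ga|_{U^1_E}\}$. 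After replacing $\te$ by a $\sG$-conjugate we may assume $\psi|_{U^1_E}=\te|_{U^1_E}$, hence $\psi^\ga|_{U^1_E}=\te^\ga|_{U^1_E}$ for all $\ga$, and each surviving identity becomes $c_\psi\,\psi^\ga|_\mu=c'\,\te^\ga|_\mu$ on $S$. Thus $(\psi/\te)|_\mu$ is a character of $\fqn^\times$ that is constant on the set of generators of $\fqn/\bF_q$; such a character is trivial, because a generator $g$ of the cyclic group $\fqn^\times$ and its square $g^2$ are both field generators, so the constant value $\la$ satisfies $\la=\la^2$, whence $\la=1$ and the character is trivial on $\langle g\rangle=\fqn^\times$. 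Therefore $\psi|_{\cO_E^\times}=\te|_{\cO_E^\times}$; comparing central characters (that of $\pi_\psi$ is $\psi|_{F^\times}$ by construction, that of $\pi$ is $\te|_{F^\times}$) gives $\psi(\varpi)=\te(\varpi)$, so $\psi=\te$ and $\pi\cong\pi_\te$ corresponds to $\sg_{\xi\te}$.

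For part (b), let $\pi'$ be the essentially square-integrable representation of $G$ corresponding to $\rho$ under the JLC. Since the JLC preserves central characters and is compatible with twists by characters of $F^\times$, we have $\omega_{\pi'}=\te|_{F^\times}$ and $\pi'\cong\pi'\tens(\eps\circ\det_A)$. Moreover $\pi'$ is supercuspidal: otherwise, by the Bernstein--Zelevinsky classification, $\pi'$ is a generalized Steinberg representation built from a supercuspidal $\tau$ of $GL_{n/r}(F)$ for some $r\mid n$ with $r\geq2$, and $\pi'\cong\pi'\tens(\eps\circ\det_A)$ forces $\tau\cong\tau\tens(\eps\circ\det)$; comparing central characters on the center of $GL_{n/r}(F)$ then gives $\eps^{n/r}=1$, contradicting the fact that $\eps$ has order exactly $n$. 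Finally, every very regular $x\in\cO_E^\times$ is a regular elliptic element of $G$ whose conjugacy class matches that of $x\in\cO_E^\times\subset D^\times$ under the common embedding of $E^\times=F(x)^\times$, so the Jacquet--Langlands character identity gives $\tr\pi'(x)=(-1)^{n-1}\tr\rho(x)=(-1)^{n-1}c''\sum_\ga\te^\ga(x)$. Since $(-1)^{n-1}c''\neq0$, part (a) applies to $\pi'$ and yields $\pi'\cong\pi_\te\cong\pi$; hence $\rho$ corresponds to $\pi$ under the JLC.

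The main obstacle is the rigidity argument in part (a), and specifically the point deferred there: one must rule out that $\psi$ is ``imprimitive'', i.e.\ that $\psi|_{U^1_E}$ factors through a norm $N_{E/E_0}$ for some proper subfield $F\subseteq E_0\subsetneq E$ (equivalently, that the $\psi^\ga|_{U^1_E}$ are not pairwise distinct). When this happens the grouping argument must be refined: one analyzes the character sums $\sum_\ga\psi^\ga(x)$ along the individual cosets $\zeta U^1_E$ ($\zeta\in S$) and uses the primitivity of $\te$ (the hypothesis on the levels of $\te/\te^\ga$) to reach a contradiction. This is in essence the uniqueness content of the results of \cite{Henniart-MathNachr1992} and \cite{Henniart-JLC-I}, from which it may alternatively be quoted. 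Everything else — the parametrization of $\cA_F^\eps(n)$, the finite-field character computation, and in part (b) the standard facts about the JLC (preservation of central characters, twist-compatibility, the regular-elliptic character identity, and the classification of discrete series of $GL_n$) — is routine.
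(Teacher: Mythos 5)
Your route for (a) is essentially the paper's: use Theorem \ref{t:Henniart-LLC} to identify $\pi$ with $\pi_\psi$ for some $\psi\in\sX$, then show from the equality of the two very-regular character sums that $\psi$ is $\sG$-conjugate to $\te$; this last step is exactly the paper's Lemma \ref{l:Henniart-trick}. The point you defer — ruling out that the restrictions $\psi^\ga|_{U^1_E}$ coincide for distinct $\ga$ — is indeed the only hole in your self-contained argument, but it is far easier than your closing paragraph suggests: no refined coset-by-coset analysis (and no appeal to Henniart's uniqueness results) is needed. Fix $\zeta\in S$ and regard $c_\psi\sum_\ga\psi^\ga(\zeta)\,\psi^\ga(u)=c'\sum_\ga\te^\ga(\zeta)\,\te^\ga(u)$, $u\in U^1_E$, as a linear relation among the distinct characters of $U^1_E$ occurring on both sides. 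The $n$ characters $\te^\ga|_{U^1_E}$ are pairwise distinct (level of $\te/\te^\ga\geq 2$) and each carries the nonzero coefficient $c'\te^\ga(\zeta)$, so by linear independence each of them must equal some $\psi^{\ga'}|_{U^1_E}$; since there are only $n$ characters $\psi^{\ga'}|_{U^1_E}$ available, they are forced to be pairwise distinct and the two sets of restrictions coincide. In other words the distinctness hypothesis on $\te$ transfers to $\psi$ automatically by pigeonhole — note the hypothesis is asymmetric, imposed only on $\te$ — and from there your matching argument and the $g,g^2$ trick on $\mu$ finish the job. One smaller caveat: the claim that the central character of $\pi_\psi$ is $\psi|_{F^\times}$ ``by construction'' is true but not recorded in this paper; it is cleaner to argue as the paper does, via $\det\sg_{\xi\psi}=\bigl(\psi|_{F^\times}\bigr)\circ\rec_F$ and compatibility of the LLC with central characters, which yields $\psi|_{F^\times}=\te|_{F^\times}$ directly.

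Where you genuinely diverge from the paper the differences are sound and worth noting. Your endgame on $\mu$ (constancy of $(\psi/\te)|_\mu$ on the set of field generators, then $\la=\la^2$ because $g$ and $g^2$ are both generators of $\fqn$ over $\bF_q$) is a clean replacement for the final step in the paper's proof of Lemma \ref{l:Henniart-trick}, and it simultaneously gives $c_\psi=c'$. In (b), you establish supercuspidality of the Jacquet--Langlands transfer via the Bernstein--Zelevinsky classification (generalized Steinberg built from $\tau$ on $GL_{n/r}(F)$, then a central-character count giving $\eps^{n/r}=1$, contradicting that $\eps$ has order $n$), whereas the paper's Lemma \ref{l:supercuspidal-trick} works on the Galois side (socle of the indecomposable Weil--Deligne representation, then determinants). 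Both are correct; yours invokes the classification of discrete series of $GL_n$, the paper's uses the LLC for non-supercuspidal representations instead. The remaining steps of (b) — preservation of central characters, twist compatibility, the sign $(-1)^{n-1}$ in the character relation at matching regular elliptic elements, and feeding the transfer into part (a) — agree with the paper.
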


The proof of Proposition \ref{p:Henniart-trick} is based on two lemmas.

\begin{lem}\label{l:supercuspidal-trick}
Let $\rho$ be a smooth irreducible representation of $D^\times$ and let $\pi$ be the representation of $G$ corresponding to $\rho$ under the JLC. If there exists a character $\eta$ of $F^\times$ of order $n$ such that $\rho\cong\rho\tens(\eta\circ\Nrd_{D/F})$, then $\pi$ is supercuspidal.
\end{lem}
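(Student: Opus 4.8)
The plan is to argue by contradiction using the compatibility of the JLC with twisting by characters of $F^\times$. Suppose $\pi$ is \emph{not} supercuspidal. Since $\pi$ is an irreducible representation of $GL_n(F)$ that lies in the image of the JLC, it is essentially square-integrable, hence (not being supercuspidal) it is of the form $\pi \cong \operatorname{St}_s(\tau)$, a generalized Steinberg representation attached to a supercuspidal $\tau$ of $GL_m(F)$ with $n = ms$ and $s \geq 2$. The key structural fact is that twisting such a representation by a character $\chi \circ \det_A$ has the effect $\operatorname{St}_s(\tau) \otimes (\chi\circ\det) \cong \operatorname{St}_s(\tau \otimes (\chi \circ \det_{GL_m}))$, and two such generalized Steinberg representations are isomorphic if and only if the underlying supercuspidal data agree. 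So the hypothesis $\rho \cong \rho \otimes (\eta \circ \Nrd_{D/F})$, transported through the JLC (which by assumption is compatible with character twists: $\operatorname{JLC}(\rho \otimes (\eta\circ\Nrd)) = \operatorname{JLC}(\rho) \otimes (\eta\circ\det)$), gives $\tau \cong \tau \otimes (\eta^j \circ \det_{GL_m})$ for every $j$, in particular for all powers of $\eta$.

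Next I would extract the numerical contradiction. The group of unramified-twist-free self-twists of a supercuspidal $\tau$ of $GL_m(F)$ — that is, the set of characters $\chi$ of $F^\times$ with $\tau \cong \tau\otimes(\chi\circ\det)$ — is a finite cyclic group whose order divides $m$ (this is standard: via the LLC it corresponds to $\sigma \cong \sigma \otimes (\chi\circ\rec_F)$ for the $m$-dimensional $\sigma = \operatorname{LLC}(\tau)$, and the stabilizer of $\sigma$ under twisting has order dividing $\dim\sigma = m$; alternatively it can be seen directly from the theory of types). Now $\eta$ has order exactly $n$, so the cyclic group it generates has order $n$; since this group is contained in the group of self-twists of $\tau$, we get $n \mid m$. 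But $m = n/s$ with $s \geq 2$, so $m < n$, a contradiction. Hence $\pi$ must be supercuspidal.

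The main obstacle, and the step requiring the most care, is the bound ``the order of the group of self-twists of a supercuspidal representation of $GL_m(F)$ divides $m$.'' I would justify this through the LLC: if $\sigma$ is an irreducible $m$-dimensional representation of $\cW_F$ and $S = \{\chi : \sigma \cong \sigma\otimes(\chi\circ\rec_F)\}$, then each $\chi \in S$ acts on the $1$-dimensional space $\operatorname{Hom}_{\cW_F}(\sigma, \sigma\otimes(\chi\circ\rec_F))$, and one checks these spaces compose to give an action making $\sigma$ into a projective representation of the finite abelian group $S$; counting multiplicities (or using that $\sigma$ decomposes into $|S|$ isotypic pieces after restriction to the kernel of all $\chi \in S$ via Clifford theory) forces $|S| \mid m$. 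An entirely parallel and perhaps cleaner route avoiding the LLC entirely: realize $\tau$ via a type from an open compact-mod-center subgroup $J \subset GL_m(F)$, note that self-twists correspond to characters trivial on $J$ and on $U^1_E$-type data, and the relevant index computation again yields a divisor of $m$. Either way, once that lemma is in hand the rest of the argument is immediate from the displayed twisting compatibility of the JLC.
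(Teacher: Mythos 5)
Your overall strategy (pass through the Bernstein--Zelevinsky classification of the discrete series, reduce via twist-compatibility of the JLC to a self-twist $\tau\cong\tau\tens(\eta\circ\det)$ of the supercuspidal support $\tau$ on $GL_m(F)$ with $m=n/s<n$, and derive a numerical contradiction) is sound, and the structural facts you invoke about generalized Steinberg representations and twisting are correct. But the key lemma you rest the numerical step on --- ``the group $S$ of characters $\chi$ of $F^\times$ with $\tau\cong\tau\tens(\chi\circ\det)$ is cyclic of order dividing $m$'' --- is false, and neither of your proposed justifications (projective action of $S$, Clifford-theoretic multiplicity count, or a type-theoretic index computation) can establish it. Already for $m=2$ and $p$ odd there are supercuspidals $\tau$ of $GL_2(F)$ whose Langlands parameter has projective image the Klein four group; such a parameter is induced from three distinct quadratic extensions, so $\tau$ admits self-twists by all three corresponding quadratic characters, and $S\cong\bZ/2\times\bZ/2$ has order $4$, which is neither cyclic nor a divisor of $m=2$. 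Consequently the Lagrange-style deduction $n=\lvert\langle\eta\rangle\rvert\mid\lvert S\rvert\mid m$ breaks down as written.

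The gap is easily repaired, and the repair is exactly the paper's trick: what is true (and all you need) is that every element of $S$ has order dividing $m$. Comparing central characters in $\tau\cong\tau\tens(\eta\circ\det)$ gives $\omega_\tau=\omega_\tau\cdot\eta^m$ on $F^\times$, hence $\eta^m=1$; since $\eta$ has order exactly $n$, this forces $n\mid m\leq n/2$, a contradiction. The paper runs the same determinant computation on the Galois side and thereby avoids the Bernstein--Zelevinsky classification altogether: the Weil--Deligne parameter $\sg$ of $\pi$ is indecomposable, its socle $\sg_0$ satisfies $\sg_0\cong\sg_0\tens(\eta\circ\rec_F)$ with $1\leq\dim\sg_0\leq n-1$, and taking determinants yields the contradiction. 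If you replace your false lemma by this central-character (or determinant) argument, your proof becomes correct and is essentially the paper's argument transported to the automorphic side.
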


\begin{proof}
Let $\sg$ be the $n$-dimensional Weil-Deligne representation of $\cW_F$ corresponding to $\pi$ under the LLC. Then $\sg$ is indecomposable. Assume, to obtain a contradiction, that $\sg$ is not irreducible. Let $\sg_0$ be the socle of $\sg$, i.e., the sum of all irreducible subrepresentations of $\sg$. Then $1\leq\dim(\sg_0)\leq n-1$. Since the JLC and the LLC are both compatible with twists, we obtain $\sg\cong\sg\tens(\eta\circ\rec_F)$, and therefore $\sg_0\cong\sg_0\tens(\eta\circ\rec_F)$. Taking determinants of both sides yields a contradiction.
\end{proof}

\begin{lem}[Henniart]\label{l:Henniart-trick}
Let $\te$ and $\te'$ be characters of $E^\times$. Assume that $\te\bigl\lvert_{F^\times}=\te'\bigl\lvert_{F^\times}$ and for each $1\neq\ga\in\sG$, the level of $\te/\te^\ga$ is $\geq 2$. If $c',c''\neq 0$ are constants with
\begin{equation}\label{e:equality-sums-theta-theta-prime}
c'\cdot\sum_{\ga\in\sG} \te'(\ga(x)) = c''\cdot\sum_{\ga\in\sG} \te(\ga(x))
\end{equation}
for every very regular element $x\in \cO_E^\times$, then $\te'=\te\circ\ga$ for some $\ga\in\sG$.
\end{lem}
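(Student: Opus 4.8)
The plan is to exploit linear independence of characters of the finite abelian group $\cO_E^\times/U_E^{r}$ for suitable $r$, restricted to the subset of very regular elements. First I would reduce to the case $c' = c''$: dividing \eqref{e:equality-sums-theta-theta-prime} by $c''$ and absorbing the nonzero scalar $c'/c''$ into one side, we may assume
\[
\la\cdot\sum_{\ga\in\sG}\te'(\ga(x)) = \sum_{\ga\in\sG}\te(\ga(x))
\]
for all very regular $x$, with $\la = c''/c' \neq 0$. Let $r = \max(\mathrm{level}(\te), \mathrm{level}(\te'))$; then both sides are pulled back from functions on the finite group $Q := \cO_E^\times/U_E^r$, and I would work entirely inside the group algebra $\ql[Q]$, or rather with $\ql$-valued functions on $Q$. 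Write $S = \{\bar x \in Q : x \text{ very regular}\}$, i.e. the image of the very regular units; equivalently $S$ is the preimage in $Q$ of the $\sG$-regular elements of $\fqn^\times$. Note $S$ is stable under the $\sG$-action and under multiplication by the image of $U_E^1$.

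The key step is the following linear-independence statement: \emph{the restrictions to $S$ of the functions $x \mapsto \sum_{\ga\in\sG}\chi(\ga(x))$, as $\chi$ ranges over a set of representatives for $\sG$-orbits of characters of $Q$ with trivial $\sG$-stabilizer, are linearly independent over $\ql$.} Granting this, apply it as follows. The hypothesis that $\te/\te^\ga$ has level $\ge 2$ for all $\ga \neq 1$ forces, in particular, $\te$ to have trivial $\sG$-stabilizer as a character of $Q$ (if $\te^\ga = \te$ then $\te/\te^\ga$ has level $0$), and likewise for $\te'$ — here one uses $\te'|_{F^\times} = \te|_{F^\times}$ together with the level hypothesis to transfer the conclusion, since $\te'/(\te')^\ga = (\te/\te^\ga)\cdot\big((\te'/\te)/(\te'/\te)^\ga\big)$ and $\te'/\te$ is a character of $E^\times/F^\times$ whose $\sG$-twist-quotient has level... — this is exactly where the hypothesis must be used carefully (see below). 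Once both $\te, \te'$ have trivial $\sG$-stabilizer on $Q$, the equation $\sum_\ga \te^\ga - \la\sum_\ga (\te')^\ga = 0$ on $S$, combined with linear independence, forces $\te$ and $\te'$ to lie in the same $\sG$-orbit (and $\la = 1$), which is the claim.

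It remains to prove the linear-independence statement, and this is the main obstacle. The difficulty is that we are restricting class-function-type sums to the \emph{proper} subset $S \subsetneq Q$, so ordinary orthogonality of characters on $Q$ does not immediately apply. The approach I would take: suppose $\sum_j a_j \big(\sum_{\ga} \chi_j^\ga\big) = 0$ on $S$ with the $\chi_j$ in distinct $\sG$-orbits, all with trivial stabilizer. Expanding, $\sum_j \sum_{\ga\in\sG} a_j \chi_j^\ga$ is a $\ql$-linear combination of \emph{pairwise distinct} characters of $Q$ (distinctness uses trivial stabilizers and distinct orbits) that vanishes on $S$. So it suffices to show: a nonzero linear combination of distinct characters of $Q$ that is $\sG$-invariant as a function cannot vanish identically on $S$. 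Equivalently, writing $f$ for such a function on $Q$, if $f$ is $\sG$-invariant, supported (in the Fourier sense) away from $0$... no: the cleaner route is to count. The complement $Q \setminus S$ is the preimage of the non-$\sG$-regular elements of $\fqn^\times$, a set of size $|U_E^1/U_E^r| \cdot |\{\bar a \in \fqn^\times : \bar a \text{ has nontrivial }\sG\text{-stabilizer}\}|$, which is small compared to $|Q|$ (bounded by $|U_E^1/U_E^r| \cdot n \cdot q^{n/p}$ roughly, whereas $|Q| = q^{n-1}(q-1)\cdot|U_E^1/U_E^r|$); one then shows that a nonzero $\sG$-invariant function which is a combination of $d := |\sG\text{-orbits}| \le$ (number of characters involved) distinct characters cannot be supported on so small a set — e.g. by the uncertainty-principle bound that a function on an abelian group with at most $k$ nonzero Fourier coefficients has support of size $\ge |Q|/k$, or more simply by a direct argument using that $S$ is a union of cosets of $U_E^1/U_E^r$ and reducing to the residue-field level $\fqn^\times$, where the statement becomes the classical fact (used constantly in Deligne–Lusztig theory) that the functions $a \mapsto \sum_\ga \chi(\ga(a))$ on $\sG$-regular $a \in \fqn^\times$ are linearly independent for $\chi$ in distinct regular orbits. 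I expect the reference \cite{Henniart-MathNachr1992} handles precisely this and I would cite it; the self-contained argument is the residue-field computation just sketched, for which I would isolate a sublemma and prove it by the Fourier-support counting above.
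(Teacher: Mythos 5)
Your plan hinges on the sublemma that the restrictions to the very regular locus of the orbit sums $x\mapsto\sum_{\ga\in\sG}\chi(\ga(x))$, for $\chi$ running over distinct $\sG$-orbits of characters with trivial stabilizer, are linearly independent. That sublemma is false in general, and the paper itself records the counterexample (Remark (2) after Lemma \ref{l:Henniart-trick-complicated}): for $n=2$, $q=3$, take level-one characters with $\te(y)=\ze$ a primitive $8$-th root of unity and $\te'(y)=-\ze$; then $\sum_{\ga}\te^\ga(x)+\sum_{\ga}\te'^\ga(x)=0$ for every very regular $x$, although $\te,\te'$ lie in different regular orbits. Pulling these characters back to your $Q=\cO_E^\times/U_E^r$ shows the dependence persists for any $r$, so no argument that only uses ``trivial $\sG$-stabilizer on $Q$'' can prove your sublemma. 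Your proposed Fourier-support counting fails exactly at this margin: a function with at most $2n$ nonzero Fourier coefficients is only forced to have support of size $\geq |Q|/2n$, which does not exceed the size of the non-regular locus when $q$ is small (in the counterexample the dependence function is supported precisely on the two non-regular elements). A correct proof along your lines would have to build the actual hypothesis of the lemma --- that $\te/\te^\ga$ has level $\geq 2$ for all $\ga\neq 1$ --- into the independence statement, and you never do; you only flag the weaker issue of transferring trivial stabilizer to $\te'$ (which your route needs but cannot supply, since the hypotheses say nothing directly about $\te'$).

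The paper's proof uses the level hypothesis as the engine, and in a different way: the hypothesis says exactly that the $n$ characters $\te\circ\ga\bigl\lvert_{U^1_E}$ are pairwise distinct. Fixing a very regular $x$ and replacing it by $xy$ with $y\in U^1_E$ (still very regular), identity \eqref{e:equality-sums-theta-theta-prime} becomes, as $y$ varies, a linear dependence among the $2n$ characters $\te\circ\ga\bigl\lvert_{U^1_E}$, $\te'\circ\ga\bigl\lvert_{U^1_E}$ of the group $U^1_E$; ordinary linear independence of characters of $U^1_E$ (no restriction to a subset, so no uncertainty-principle issue) forces $c'=c''$ and $\te'\bigl\lvert_{U^1_E}=\te\circ\ga\bigl\lvert_{U^1_E}$ for some $\ga$, without ever needing to know a priori that the $\te'\circ\ga$ are distinct. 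A second application of the same comparison of coefficients gives $\te'(x)=\te(x)$ on very regular elements, and since a very regular element together with $U^1_E$ generates $\cO_E^\times$, and $E^\times=F^\times\cdot\cO_E^\times$ with $\te\bigl\lvert_{F^\times}=\te'\bigl\lvert_{F^\times}$, one concludes $\te'=\te\circ\ga$. The residue-field independence you hoped to quote as ``classical'' is essentially the content of Henniart's harder argument behind Lemma \ref{l:Henniart-trick-complicated}, which requires excluding (or separately handling) small cases; it is not available as a cheap off-the-shelf fact.
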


\begin{proof}
We follow \cite[\S5.3]{Henniart-JLC-I}. The characters $\bigl\{\te\circ\ga\bigl\lvert_{U^1_E}\bigr\}_{\ga\in\sG}$ of $U^1_E$ are all pairwise distinct by assumption. If $x\in\cO_E^\times$ is very regular, so is $xy$ for any $y\in U^1_E$. Let us replace $x$ with $xy$ in formula \eqref{e:equality-sums-theta-theta-prime}, keep $x$ fixed and vary $y$ over $U^1_E$. We obtain an equation of linear dependence between the $2n$ characters of $U^1_E$ given by $\te\circ\ga$ and $\te'\circ\ga$ for all $\ga\in\sG$. Even though the characters $\te'\circ\ga$ of $U^1_E$ may not be \emph{a priori} pairwise distinct, we see that $c'=c''$ and there exists $\ga\in\sG$ such that $\te'\bigl\lvert_{U^1_E}=\te\circ\ga\bigl\lvert_{U^1_E}$. Without loss of generality we may assume that $\ga=1$. The same linear dependence argument now shows that $\te'(x)=\te(x)$ for any very regular element $x\in\cO_E^\times$. But if $x$ is very regular, then $x$ together with $U^1_E$ generate $\cO_E^\times$ as a group. Thus $\te$ and $\te'$ agree on $\cO_E^\times$ and hence also on $E^\times=F^\times\cdot\cO_E^\times$.
\end{proof}

\begin{proof}[Proof of Proposition \ref{p:Henniart-trick}]
(a) Since $\pi\in\cA_F^\eps(n)$, there exists $\te'\in\sX$ such that $\pi$ corresponds to $\sg_{\xi\te'}$ under the LLC. We must prove that $\te$ and $\te'$ are $\sG$-conjugate. We have $\pi\cong\pi_{\te'}$ by Theorem \ref{t:Henniart-LLC}(a), so by Theorem \ref{t:Henniart-LLC}(b), there exists a constant $c=\pm 1$ such that $c'\cdot\sum_{\ga\in\sG} \te'(\ga(x)) = c\cdot\sum_{\ga\in\sG} \te(\ga(x))$ for every very regular element $x\in \cO_E^\times$. The determinant of $\sg_{\xi\te'}$ equals $\bigl(\te'\bigl\lvert_{F^\times}\bigr)\circ\rec_F$, whence $\te\bigl\lvert_{F^\times}=\te'\bigl\lvert_{F^\times}$. Now Lemma \ref{l:Henniart-trick} implies that $\te$ and $\te'$ are $\sG$-conjugate.

\mbr

\noindent (b) Let $\pi''$ be the representation of $G$ corresponding to $\rho$ via the JLC. Then $\pi''$ is supercuspidal by Lemma \ref{l:supercuspidal-trick}, $\pi''\in\cA_F^\eps(n)$ because the JLC is compatible with twists, and $\tr\pi''(x)=(-1)^{n-1}\cdot\tr\rho(x)=(-1)^{n-1}c''\cdot\sum_{\ga\in\sG} \te^\ga(x)$ for each very regular element $x\in\cO_E^\times$. Furthermore, $\pi''$ has central character $\te\bigl\lvert_{F^\times}$ because the JLC preserves central characters. Part (a) of the proposition implies that $\pi''\cong\pi$.
\end{proof}

\subsection{An aside}
The assumption on $\te$ made in Proposition \ref{p:Henniart-trick} (which is equivalent to the requirement that $\te\bigl\lvert_{U^1_E}$ has trivial $\sG$-stabilizer) is only needed to apply Lemma \ref{l:Henniart-trick}. In fact, this assumption can be weakened substantially:

\begin{lem}\label{l:Henniart-trick-complicated}
Let $\te,\te'\in\sX$ be such that $\te\bigl\lvert_{F^\times}=\te'\bigl\lvert_{F^\times}$. In the case where $n=2$ and $q=3$, assume also that $\te\bigl\lvert_{U^1_E}$ is not $\sG$-invariant. If $c',c''\neq 0$ are constants such that \eqref{e:equality-sums-theta-theta-prime} holds for every very regular element $x\in \cO_E^\times$, then $\te'=\te\circ\ga$ for some $\ga\in\sG$.
\end{lem}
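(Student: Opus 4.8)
The plan is to argue by cases on the $\sG$-stabilizer $H$ of the restriction $\te\restr{U^1_E}$, a subgroup of the cyclic group $\sG$. If $H=\{1\}$ --- equivalently, $\te/\te^\ga$ has level $\geq 2$ for every $1\neq\ga\in\sG$ --- then Lemma \ref{l:Henniart-trick} applies directly; note that when $n=2$ and $q=3$ the extra hypothesis on $\te$ is exactly what guarantees $H=\{1\}$, so this first case subsumes that situation. Assume henceforth that $H\neq\{1\}$ and put $h=|H|\geq 2$. Let $\bar\te,\bar{\te'}$ denote the characters of $\fqn^\times$ obtained by restricting $\te,\te'$ to the group of Teichm\"uller representatives $\cong\fqn^\times$ inside $\cO_E^\times$. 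Since $\cO_E^\times$ is the internal direct product of that group with $U^1_E$ and the $\sG$-action respects the product, the condition $\te\in\sX$ is equivalent to $\operatorname{Stab}_\sG(\bar\te)\cap H=\{1\}$; in particular the characters $\bar\te^\ga$, $\ga\in H$, are pairwise distinct, and likewise for $\bar{\te'}$ once we know that $\operatorname{Stab}_\sG(\te'\restr{U^1_E})=H$.

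I would then repeat the linear-independence argument from the proof of Lemma \ref{l:Henniart-trick}, but carried one filtration step deeper. Fix a very regular $z\in\fqn^\times$, let $[z]\in\cO_E^\times$ be its Teichm\"uller lift (again a very regular element), and substitute $x=[z]y$ with $y$ running over $U^1_E$ into \eqref{e:equality-sums-theta-theta-prime}. Writing $\te(\ga([z]y))=\bar\te(z^\ga)\,\te^\ga(y)$ and grouping the summands by the character $\te^\ga\restr{U^1_E}$ of $U^1_E$ (and similarly for $\te'$), linear independence of distinct characters of $U^1_E$ forces the isotypic coefficients to agree. A character of $U^1_E$ occurring among the $\te^\ga\restr{U^1_E}$ but not among the $\te'^\ga\restr{U^1_E}$ would then force a sum $\sum_{\ga\in H}\bar\te(w^\ga)$ (for a fixed coset representative) to vanish at every very regular $w\in\fqn^\times$, which is impossible by the Fourier argument below (applied with the second orbit empty); hence $\{\te^\ga\restr{U^1_E}\}_{\ga\in\sG}=\{\te'^\ga\restr{U^1_E}\}_{\ga\in\sG}$. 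After replacing $\te'$ by a suitable $\sG$-translate --- harmless, since $\te'\in\sX$, the identity $\te'\restr{F^\times}=\te\restr{F^\times}$, the relation \eqref{e:equality-sums-theta-theta-prime} and the desired conclusion are all stable under $\sG$-translation of $\te'$ --- we may assume $\te'\restr{U^1_E}=\te\restr{U^1_E}$; then $\operatorname{Stab}_\sG(\te'\restr{U^1_E})=H$ and $\operatorname{Stab}_\sG(\bar{\te'})\cap H=\{1\}$, and extracting the coefficient of the character $\te\restr{U^1_E}$ yields
\begin{equation*}
c'\sum_{\ga\in H}\bar{\te'}(z^\ga)\;=\;c''\sum_{\ga\in H}\bar\te(z^\ga)\qquad\text{for every very regular }z\in\fqn^\times.
\tag{$\star$}
\end{equation*}

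The heart of the matter is to deduce from $(\star)$ that $\bar{\te'}$ lies in the $H$-orbit of $\bar\te$. Granting this, write $\bar{\te'}=\bar\te^\ga$ with $\ga\in H$; then also $\te'\restr{U^1_E}=\te\restr{U^1_E}=\te^\ga\restr{U^1_E}$ (because $\ga\in H$) and $\te'(\varpi)=\te(\varpi)$ (because $\te\restr{F^\times}=\te'\restr{F^\times}$), so $\te'=\te^\ga$ on $E^\times=\varpi^{\bZ}\cdot\cO_E^\times$, which --- after undoing the $\sG$-translation --- is the assertion. Now $(\star)$ says that the function $f:=c'\sum_{\ga\in H}\bar{\te'}^\ga-c''\sum_{\ga\in H}\bar\te^\ga$ on the cyclic group $\fqn^\times$ is supported on the non--very-regular elements, i.e.\ on $\bigcup_{d\mid n,\,d<n}\bF_{q^d}^\times$, the union of the unit groups of the proper subfields of $\fqn$. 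If $\bar{\te'}$ were not $H$-conjugate to $\bar\te$, the $h$-element $H$-orbits $\{\bar\te^\ga\}$ and $\{\bar{\te'}^\ga\}$ would be disjoint and $f$ would be a nonzero combination of $2h$ pairwise distinct characters of $\fqn^\times$, all with nonzero coefficients. The plan is to exclude this by Fourier analysis on $\fqn^\times$: each $\bF_{q^d}^\times$ is a subgroup, so a function supported on their union has Fourier transform confined to the span of the corresponding dual subgroups, and one checks that this span contains no nonzero function whose support is a union of two Frobenius orbits of characters --- with the single genuine exception $(n,q)=(2,3)$, already excluded above, together with a short list of further small pairs (for instance $(2,2)$, $(4,2)$ and $(6,2)$), which are dealt with by direct inspection of the cyclic group $\fqn^\times$ and the Frobenius action. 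The main obstacle is precisely this last analysis: isolating the finitely many pairs $(n,q)$ for which the displayed incompatibility does not follow at once from the uncertainty principle, and verifying that among them only $(2,3)$ actually yields a counterexample to the statement --- which is exactly why the extra hypothesis there is both needed and sufficient.
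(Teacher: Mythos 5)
Your reduction is sound as far as it goes, and it retraces what is in effect the routine half of the argument: the case $H=\{1\}$ is indeed covered by Lemma \ref{l:Henniart-trick} (and correctly absorbs the $n=2,q=3$ hypothesis), the decomposition $\cO_E^\times\cong\fqn^\times\times U^1_E$ is $\sG$-equivariant, linear independence of characters of $U^1_E$ does let you match up the two families of restrictions and extract the identity $(\star)$ between $H$-orbit sums of characters of $\fqn^\times$ at very regular elements, and the final descent from $(\star)$ back to $\te'=\te^\ga$ is correct. For comparison: the paper does not prove the lemma directly at all; it splits into the case where $\te\bigl\lvert_{U^1_E}$ is not $\sG$-invariant, citing \cite[\S2.8]{Henniart-MathNachr1992}, and the case $(n,q)\neq(2,3)$, citing \S\S2.6--2.11 of \emph{op.~cit.} Your $(\star)$ is essentially the statement treated in those sections.

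The gap is that the actual content of the lemma is never proved. Two assertions are deferred to a ``Fourier argument'' that is only sketched: (i) a single orbit sum $\sum_{\ga\in H}\bar\te^{\ga_0\ga}$ cannot vanish at every very regular element of $\fqn^\times$ (needed already to match the two sets of $U^1_E$-characters), and (ii) $(\star)$ is impossible when the $H$-orbits of $\bar\te$ and $\bar{\te'}$ are disjoint. The uncertainty-principle plan does not close either one as written: the Donoho--Stark bound $\abs{\supp f}\cdot\abs{\supp\hat f}\geq q^n-1$, with $\supp f\subset\bigcup_{d\mid n,\,d<n}\bF_{q^d}^\times$ and $\abs{\supp\hat f}\leq 2h$, genuinely fails to give a contradiction for a short list of small pairs (with exact support counts, $(2,2),(2,3),(4,2),(6,2)$ for the two-orbit claim), and those surviving cases --- e.g.\ $(4,2)$ with $h=4$ on $\bF_{16}^\times$, $(6,2)$ with $h=6$ on $\bF_{64}^\times$ --- are exactly where a counterexample could hide (as $(2,3)$ shows, such identities \emph{can} hold for small fields). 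You neither pin down this list rigorously nor carry out the finite verifications, and you state explicitly that this is ``the main obstacle.'' So the proof is incomplete precisely at the step the paper outsources to Henniart: either carry out the finite-field analysis (uncertainty or second-moment bound plus explicit inspection of the surviving small $(n,q)$, and separately the single-orbit case (i), which does follow from the uncertainty bound with no exceptions once you do the count), or cite \cite[\S\S2.6--2.11]{Henniart-MathNachr1992} as the paper does.
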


\begin{proof}
The argument of \cite[\S2.8]{Henniart-MathNachr1992} shows that if $\te\bigl\lvert_{U^1_E}$ is not $\sG$-invariant, then the conclusion of the lemma holds without any restrictions on $n$ and $q$. In addition, if $n>2$ or $n=2$ and $q>3$, the conclusion of the lemma follows from the argument given in \S\S2.6--2.11 of \emph{op.~cit.}
\end{proof}

\begin{rems}
\begin{enumerate}[(1)]
\item The irreducible representations of $G$ corresponding to those $\sg_\te$ that arise from characters $\te\in\sX$ such that $\te\bigl\lvert_{U^1_E}$ is $\sG$-invariant are exactly the twists of \emph{depth zero} supercuspidal representations. Indeed, if $\te\in\sX$, then $\te\bigl\lvert_{U^1_E}$ is $\sG$-invariant if and only if there exist a character $\te_1\in\sX$ of level $1$ and a character $\eta$ of $F^\times$ such that $\te=\te_1\cdot(\eta\circ N_{E/F})$.
 \sbr
\item Lemma \ref{l:Henniart-trick-complicated} is sharp, in the sense that the assumption that $\te\bigl\lvert_{U^1_E}$ is not $\sG$-invariant when $n=2$ and $q=3$ cannot be removed. To see this, we consider the case when $\te,\te'$ have level $1$, so that they can be viewed as characters of $\cO_E^\times/U^1_E\cong\bF_9^\times$. Let $y\in\bF_9^\times$ be a generator. The elements of $\bF_9^\times$ that have trivial $\sG$-stabilizer break up into the following three $\sG$-orbits: $\{y,y^3\}$, $\{y^{-1},y^{-3}\}$ and $\{y^2,y^{-2}\}$. To specify the characters $\te$ and $\te'$ we must specify $\ze=\te(y)$ and $\ze'=\te'(y)$; here $\ze,\ze'\in\qls$ must be $8$-th roots of unity that are not $\pm 1$. Note that $\te$ and $\te'$ are $\sG$-conjugate if and only if $\ze'\in\{\ze,\ze^3\}$. On the other hand, the hypothesis of the lemma with $c'=1$ and $c''=-1$ amounts to the following identities:
    \[
    \ze+\ze^3=-(\ze'+\ze'^3), \quad \ze^{-1}+\ze^{-3}=-(\ze'^{-1}+\ze'^{-3}), \quad \ze^2+\ze^{-2}=-(\ze'^2+\ze'^{-2}).
    \]
    If we choose $\ze$ to be a primitive $8$-th root of $1$ and take $\ze'=-\ze$, the identities above will be satisfied because $\ze^2+\ze^{-2}=0$; however, $\ze'\not\in\{\ze,\ze^3\}$. One can check by inspection that this is essentially the only counterexample; in particular, if we assume that $c'=c''$, then Lemma \ref{l:Henniart-trick-complicated} becomes valid without any additional hypotheses in the case where $n=2$ and $q=3$.
\end{enumerate}
\end{rems}

\section{Statements of the main results}\label{s:main-results}

From now on we fix a \emph{primitive} character $\te$ of $E^\times$ of level $r_0\geq 2$. Our goal in this section is to give a construction of an irreducible supercuspidal representation $\pi$ of $G$ that satisfies the hypotheses of Proposition \ref{p:Henniart-trick}(a) and of a smooth irreducible representation $\rho$ of $D^\times$ that satisfies the hypotheses of Proposition \ref{p:Henniart-trick}(b).

\subsection{Generalities} The representation $\pi$ will be constructed via induction from an open subgroup $K\subset G$ such that $F^\times\subset K$ and $K/F^\times$ is compact. Let us recall some standard facts that will allow us to prove the required properties of $\pi$.

\begin{defin}\label{d:intertwiners}
Let $G$ be an abstract group, let $K\subset G$ be a subgroup and let $\sg$ be a representation of $K$. For each $g\in G$, we write $\sg^g$ for the representation of $g^{-1}Kg$ given by $\sg^g(k)=\sg(gkg^{-1})$. One says that $g$ \emph{intertwines} the pair $(K,\sg)$ if $\Hom_{K\cap g^{-1}Kg}(\sg,\sg^g)\neq 0$, where $\sg,\sg^g$ are viewed as representations of $K\cap g^{-1}Kg$.
\end{defin}

The following result is well known.
\begin{thm}\label{t:supercuspidals-GL-n}
Let $K\subset G=A^\times\cong GL_n(F)$ be an open subgroup such that $F^\times\subset K$ and $K/F^\times$ is compact and let $\sg$ be a smooth irreducible representation of $K$. Assume that if $g\in G$ intertwines $(K,\sg)$, then $g\in K$. Then

\begin{enumerate}[$($a$)$]
\item the natural map $c-\Ind_K^G(\sg)\rar{}\Ind_K^G(\sg)$ is an isomorphism;
 \sbr
\item $\pi=\Ind_K^G(\sg)$ is an irreducible supercuspidal representation of $G$;
 \sbr
\item for each regular elliptic $x\in G$, we have
\begin{equation}\label{e:character-formula-GL-n}
\tr\pi(x) = \sum_{g\in K\backslash G,\ gxg^{-1}\in K} \tr\sg(gxg^{-1}).
\end{equation}
\end{enumerate}
\end{thm}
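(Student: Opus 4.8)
The plan is to prove the three assertions of Theorem~\ref{t:supercuspidals-GL-n} in order, exploiting the hypothesis that every element of $G$ intertwining $(K,\sg)$ already lies in $K$. First I would recall the Mackey-style description of the restriction of $\Ind_K^G(\sg)$ to $K$: as a $K$-module, $\Ind_K^G(\sg)$ decomposes according to the double cosets $K\backslash G/K$, the contribution of the coset $KgK$ being $\Ind_{K\cap {}^gK}^K\bigl(\sg^{g^{-1}}\bigr)$ (an honest induction, not compact induction, a priori). The intertwining hypothesis says precisely that for $g\notin K$ the space $\Hom_{K\cap g^{-1}Kg}(\sg,\sg^g)$ vanishes. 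The first key step is then to upgrade this to the vanishing of $\Hom_K\bigl(\sg,\Ind_{K\cap {}^gK}^K(\sg^{g^{-1}})\bigr)$ for $g\notin K$, which is Frobenius reciprocity once one knows that the two notions of $\Hom$ here really agree; the delicate point is that $K\backslash KgK$ need not be finite, so one must check that the relevant $\Hom$ spaces behave well under the (possibly infinite) induction. This is where the hypothesis $F^\times\subset K$ with $K/F^\times$ compact enters: it guarantees that the $K$-action on $K\backslash KgK$ has compact-modulo-center orbits, so smooth vectors behave as expected and the Mackey decomposition is valid in the smooth category.

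For part~(a), the natural map $c\text{-}\Ind_K^G(\sg)\to\Ind_K^G(\sg)$ is injective with image the smooth vectors supported on finitely many cosets; I would argue it is surjective by showing that a smooth vector $f\in\Ind_K^G(\sg)$ is automatically compactly supported modulo $K$. Concretely, $f$ is fixed by some open compact-mod-center subgroup $K'$; the support of $f$ is a union of double cosets $K'gK$, and one shows using the intertwining/admissibility input that only finitely many such cosets can carry a nonzero value — this is the standard argument that an irreducible smooth representation of $G$ built by induction from a compact-mod-center subgroup is automatically a subspace of $c\text{-}\Ind$. Part~(b): irreducibility follows from $\dim\Hom_G\bigl(c\text{-}\Ind_K^G\sg,\Ind_K^G\sg\bigr)=\dim\Hom_G(\pi,\pi)$, and by Frobenius reciprocity (the version pairing $c\text{-}\Ind$ with $\Ind$) this equals $\dim\Hom_K\bigl(\sg,\Res_K\Ind_K^G\sg\bigr)$, which by the Mackey decomposition and the intertwining hypothesis collapses to the single term $g\in K$, giving $\Hom_K(\sg,\sg)$, one-dimensional since $\sg$ is irreducible. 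Supercuspidality then follows because $\pi=c\text{-}\Ind_K^G\sg$ with $K$ compact modulo center: its matrix coefficients are compactly supported modulo center, hence $\pi$ has no nonzero Jacquet modules.

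For part~(c), the character formula, I would invoke the general trace formula for compactly induced representations: since $\pi=c\text{-}\Ind_K^G(\sg)$ and $K/F^\times$ is compact, for a test function supported near a regular elliptic $x$ one computes the distributional trace by integrating the kernel of $\pi(f)$ along the diagonal, which produces a sum over $K\backslash G$ of $\tr\sg(gxg^{-1})$ for those $g$ with $gxg^{-1}\in K$; the ellipticity and regularity of $x$ ensure this sum is finite (the centralizer of $x$ is compact modulo center, so there are only finitely many relevant cosets) and that the naive formula is literally valid on the regular set without convergence issues. I expect the main obstacle to be the careful justification of the Mackey decomposition and Frobenius reciprocity in the smooth, non-unitary, infinite-index setting — in particular verifying that $\Hom_K(\sg,\Ind_{K\cap {}^gK}^K \sg^{g^{-1}})$ genuinely reduces to the intertwining space $\Hom_{K\cap {}^gK}(\sg,\sg^{g^{-1}})$ even when $K\cap {}^gK$ has infinite index in $K$ — together with the convergence bookkeeping in the character formula. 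Since the theorem is stated as ``well known,'' I would ultimately cite the standard references (Bushnell--Kutzko, or Bushnell--Henniart) for these points rather than reproving them in detail.
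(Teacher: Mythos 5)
Your overall route is the standard one, and it matches what the paper actually does: the paper quotes (a) and (b) as well known, and the only part it proves is the character formula (c), in Section~\ref{s:Frobenius-character-formula}, following the appendix of Henniart's paper. Two of your steps, however, are genuine gaps rather than compressions. In (b) you deduce irreducibility from $\dim\Hom_G\bigl(\ind_K^G\sg,\Ind_K^G\sg\bigr)=1$; one-dimensionality of this space (equivalently, of $\End_G(\pi)$) does not imply irreducibility --- a non-split extension of two non-isomorphic irreducibles already has a one-dimensional endomorphism algebra. The standard argument uses the Mackey computation differently: since $F^\times$ is central, every open subgroup of $K$ containing $F^\times$ has \emph{finite} index in $K$ (this also dissolves the ``infinite index'' worry you raise --- each double coset $KgK$ contains only finitely many left cosets $Kg'$), so the restriction of $\ind_K^G(\sg)$ to $K$ is semisimple, its $\sg$-isotypic part is exactly the space of functions supported on $K$ by the intertwining hypothesis, and one checks that every nonzero $G$-submodule meets this copy of $\sg$ and hence is everything; part (a) is then usually obtained from the duality $\Ind_K^G(\sg)\cong\bigl(\ind_K^G(\check\sg)\bigr)^\vee$ together with admissibility, rather than from the unproved assertion that every smooth vector of $\Ind_K^G(\sg)$ is automatically compactly supported.

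In (c), the finiteness of the set of cosets $Kg$ with $gxg^{-1}\in K$ does \emph{not} follow from compactness of $C_G(x)/F^\times$ alone: what is needed is that the set of $g$ with $gxg^{-1}$ in a fixed compact-mod-center set has compact image modulo $C_G(x)$, and this properness statement is precisely the Harish-Chandra--van Dijk lemma that the paper imports in Lemma~\ref{l:auxiliary-HC}. Moreover, to identify the distribution character near the regular elliptic point $x$ with the naive sum, one must know that the coset count $n_K(y)=\#\{Kg: gyg^{-1}\in K\}$ is constant for $y$ in a neighborhood of $x$, so that the normalized integrals over small cosets $xN$ stabilize; the paper gets this local constancy from the submersion principle, whereas your proposal simply asserts that the formula is ``literally valid without convergence issues.'' Since these two points, plus the formal Mackey computation of $\tr\rho(f)$, constitute the entire content of the paper's proof of (c) (Theorem~\ref{t:Frobenius-char-formula}), they cannot be waved away --- although your stated fallback of citing Bushnell--Kutzko or Henniart's appendix would indeed supply them.
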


\begin{rems}
Recall that an element $x\in G$ is \emph{regular elliptic} if its characteristic polynomial is separable and irreducible over $F$. For example, very regular elements of $\cO_E^\times\subset G$ are regular elliptic. It is tacitly understood that the sum on the right hand side of \eqref{e:character-formula-GL-n} is finite when $x$ is regular elliptic; in \S\ref{s:Frobenius-character-formula} we give a proof of this formula following the appendix of \cite{Henniart-MathNachr1992}.
\end{rems}

Theorem \ref{t:supercuspidals-GL-n} has an analogue for the group $D^\times$ in place of $G$, which is essentially trivial because the quotient $D^\times/F^\times$ is compact (in particular, all smooth irreducible representations of $D^\times$ are finite dimensional).

\begin{prop}\label{p:division-algebra-auxiliary}
Let $K\subset D^\times$ be an open subgroup and let $\sg$ be a smooth irreducible representation of $K$. Assume that if $g\in D^\times$ intertwines $(K,\sg)$, then $g\in K$. Then $\rho=\Ind_K^{D^\times}(\sg)$ is a smooth irreducible representation of $G$ and\footnote{Note that we necessarily have $F^\times\subset K$, and hence $K$ has finite index in $D^\times$.}
\begin{equation}\label{e:character-formula-division-algebra}
\tr\rho(x) = \sum_{g\in K\backslash D^\times,\ gxg^{-1}\in K} \tr\sg(gxg^{-1}) \qquad\forall\,x\in D^\times.
\end{equation}
\end{prop}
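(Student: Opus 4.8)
The plan is to mirror the classical Mackey-theory argument that underlies Theorem \ref{t:supercuspidals-GL-n}, but in the much simpler setting where the ambient group is compact modulo its center. First I would observe that since $K$ is open in $D^\times$ and $D^\times/F^\times$ is compact, while $F^\times$ is contained in any open subgroup containing enough of a neighborhood of $1$ — more precisely, since $\rho = \Ind_K^{D^\times}(\sg)$ is required to be a representation with a central character (as $\sg$ is irreducible, $F^\times$ acts by a scalar on $\sg$, forcing $F^\times \subset K$ as noted in the footnote) — the index $[D^\times : K]$ is finite. Hence $\Ind = c\text{-}\Ind$ automatically and $\rho$ is finite-dimensional, so there are no convergence or topological subtleties: everything reduces to finite-group-style representation theory applied to the finite quotient $D^\times/F^\times$ acting through $K/F^\times$.

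The irreducibility of $\rho$ is then exactly Mackey's irreducibility criterion: for $\rho = \Ind_K^{D^\times}(\sg)$ to be irreducible it is necessary and sufficient that for every $g \in D^\times \setminus K$ one has $\Hom_{K \cap gKg^{-1}}(\sg, {}^g\sg) = 0$, where ${}^g\sg$ denotes the representation $k \mapsto \sg(g^{-1}kg)$ of $gKg^{-1}$. Unwinding Definition \ref{d:intertwiners}, the hypothesis ``every $g$ that intertwines $(K,\sg)$ lies in $K$'' says precisely that this $\Hom$-space vanishes for all $g \notin K$ (replacing $g$ by $g^{-1}$, which is harmless). So I would simply cite Mackey's criterion in the finite-index setting and conclude that $\rho$ is smooth (clear, since $K$ is open) and irreducible.

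For the character formula \eqref{e:character-formula-division-algebra} I would use the standard formula for the character of an induced representation of finite index: for any $x \in D^\times$,
\[
\tr\rho(x) = \sum_{g \in K\backslash D^\times,\ g x g^{-1}\in K} \tr\sg(g x g^{-1}),
\]
which is valid for \emph{every} element, not just regular elliptic ones, because $[D^\times:K] < \infty$ makes the sum finite with no restriction needed. One verifies this by choosing a set of coset representatives for $K\backslash D^\times$, writing $\rho$ on the model $\{f : D^\times \to \sg \mid f(kg) = \sg(k)f(g)\}$, and computing the trace of the right-translation action of $x$ as a sum of block-diagonal contributions indexed by those cosets $Kg$ fixed by right multiplication by $x$, i.e.\ those with $gxg^{-1} \in K$; the diagonal block on such a coset has trace $\tr\sg(gxg^{-1})$, and a short check shows this is independent of the choice of representative $g$ in its coset.

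Honestly there is no serious obstacle here: the whole content is that $D^\times/F^\times$ compact collapses the analytic difficulties present in the $GL_n$ case. The one point worth stating carefully is why $F^\times \subset K$ (hence finite index), which follows from $\sg$ being irreducible and $F^\times$ central in $D^\times$, so that $F^\times$ acts on $\sg$ by a character and therefore normalizes and is intertwined with $(K,\sg)$ — invoking the hypothesis then gives $F^\times \subset K$. After that, the proof is a one-paragraph appeal to Mackey's criterion and the induced-character formula for subgroups of finite index.
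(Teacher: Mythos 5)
Your argument is correct and is exactly what the paper has in mind: the paper gives no written proof, dismissing the proposition as essentially trivial because $D^\times/F^\times$ is compact, and your reduction to finite-index Mackey theory (the irreducibility criterion plus the induced-character formula for a finite-index subgroup) is precisely that intended argument. One small remark: the justification of $F^\times\subset K$ in your opening paragraph is circular as phrased ($F^\times$ cannot act on $\sg$ before you know it lies in $K$); the correct reason is the one you give at the end, namely that central elements automatically intertwine $(K,\sg)$, so the hypothesis forces them into $K$, and then $[D^\times:K]<\infty$ follows from the compactness of $D^\times/F^\times$.
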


\subsection{The $GL_n$ case} Recall that $E\supset F$ is an unramified degree $n$ extension, $A=\End_F(E)$ is the algebra of endomorphisms of $E$ as an $F$-vector space and $G=A^\times\cong GL_n(F)$. We have natural embeddings $E\into A$ (as an $F$-subalgebra) and $E^\times\into G$ and $\sG=\Gal(E/F)\into G$ as subgroups. They allow us to identify $A$ with the twisted group algebra of $\sG$ over $E$, that is, every element of $A$ can be written uniquely as $\sum_{\ga\in\sG}a_\ga\cdot\ga$ for some $a_\ga\in E$, and the following commutation relation holds: $\ga\cdot a=\ga(a)\cdot\ga$ for all $a\in E$ and all $\ga\in\sG$. Let $C\subset A$ denote the orthogonal complement of $E$ with respect to the trace pairing on $A$; then $C=\bigoplus_{1\neq\ga\in\sG} E\cdot\ga$.

\mbr

Recall that $\cO_A=\End_{\cO_F}(\cO_E)$, which is an open compact $\cO_F$-subalgebra of $A$. For each $m\in\bZ$ we put $P^m_A=\varpi^m\cdot\cO_A$ (where $\varpi$ is a uniformizer of $F$), and if $m\geq 1$, we define $U^m_G=1+P^m_A$. As $E$ is unramified over $F$, each of the subgroups $U^m_G$ is normalized by $E^\times$ and $P_A^m=P_E^m\oplus(C\cap P^m_A)$ for all $m\in\bZ$.

\begin{thm}\label{t:GL-n}
Let $\te$ be a primitive character of $E^\times$ of level $r_0\geq 2$.

\begin{enumerate}[$($a$)$]
\item Suppose that $r_0$ is even. Then there exists a unique character $\widetilde{\te}$ of $E^\times\cdot U^{r_0/2}_G$ that restricts to $\te$ on $E^\times$ and is trivial on $1+(C\cap P^{r_0/2}_A)$. The representation $\pi=\Ind_{E^\times\cdot U^{r_0/2}_G}^G(\widetilde{\te})$ of $G$ is irreducible and supercuspidal.
 \sbr
\item Suppose that $r_0$ is odd. There is an irreducible representation $\sg$ of $E^\times\cdot U^{(r_0-1)/2}_G$ such that $\tr\sg(x)=(-1)^{(n-1)}\cdot\te(x)$ for each very regular element $x\in\cO_E^\times$ and the restriction of $\sg$ to $F^\times\cdot U^1_E\cdot U^{(r_0+1)/2}_G$ is a direct sum of copies of a character that equals $\te$ on $F^\times\cdot U^1_E$ and is trivial on $1+(C\cap P^{(r_0+1)/2}_A)$. The representation $\pi=\Ind_{E^\times\cdot U^{(r_0-1)/2}_G}^G(\sg)$ of $G$ is irreducible and supercuspidal.
 \sbr
\item In both cases, $\pi$ has central character $\te\bigl\lvert_{F^\times}$ and satisfies $\pi\cong\pi\tens(\eps\circ\det_A)$, and $\tr\pi(x)=(-1)^{r_0(n-1)}\cdot\sum_{\ga\in\sG} \te^\ga(x)$ for each very regular element $x\in\cO_E^\times$.
\end{enumerate}
\end{thm}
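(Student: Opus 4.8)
The plan is to treat the two cases $r_0$ even and $r_0$ odd in parallel, construct the subgroup $K$ and the representation $\sg$ explicitly, verify the intertwining hypothesis of Theorem \ref{t:supercuspidals-GL-n}, and then read off the character formula on very regular elements. First consider $r_0$ even, say $r_0 = 2s$. Since $P_A^s = P_E^s \oplus (C \cap P_A^s)$, the commutator subgroup calculations in $U^s_G$ show that $U^{r_0}_G = 1 + P_A^{2s}$ contains $[U^s_G, U^s_G]$ and that $1 + (C\cap P_A^s)$ is a normal subgroup of $E^\times \cdot U^s_G$ with abelian quotient isomorphic (via the embedding $E^\times \into G$ and the fact that $E/F$ is unramified) to $E^\times / U^s_E$. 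This yields existence and uniqueness of the character $\widetilde{\te}$ extending $\te$. To apply Theorem \ref{t:supercuspidals-GL-n} one must check: if $g \in G$ intertwines $(E^\times \cdot U^s_G, \widetilde{\te})$ then $g \in E^\times \cdot U^s_G$. Here I would use the standard reduction that an intertwiner must normalize the relevant filtration pieces and then restrict to $E^\times$; the hypothesis that $\te/\te^\ga$ has level $r_0 = 2s$ for every $1 \neq \ga \in \sG$ forces the "off-diagonal" part of any intertwiner to vanish modulo $P_A^s$, so $g$ lies in $E^\times \cdot U^s_G$ up to an element of $F^\times$ — this is exactly where primitivity of $\te$ is used, and it is the crux of the argument.

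For $r_0 = 2s+1$ odd, the group $J := F^\times \cdot U^1_E \cdot U^{s+1}_G$ carries the character $\psi$ equal to $\te$ on $F^\times \cdot U^1_E$ and trivial on $1 + (C \cap P_A^{s+1})$, and one checks that $U^s_G$ normalizes $J$ and that the quotient $(U^s_G \cdot J)/(\ker\psi)$ is a Heisenberg-type group: the commutator pairing on $U^s_G/U^{s+1}_G$ induced by $\psi$ is (via $P_A^s/P_A^{s+1} \cong \cO_E/P_E \oplus (C\cap P_A^s)/(C\cap P_A^{s+1})$ and the trace form) a nondegenerate alternating form on the "$C$-part", again by primitivity of $\te$. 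Stone–von Neumann then produces, up to isomorphism, a unique irreducible representation $\sg$ of $U^s_G \cdot J$ whose restriction to $J$ is $\psi$-isotypic; extending by $\te$ on $E^\times$ (which normalizes everything and acts trivially on the relevant symplectic space because $E/F$ is unramified, so no obstruction arises) gives $\sg$ on $K := E^\times \cdot U^s_G$. The dimension of $\sg$ is the square root of the order of the symplectic space, which is $q^{(n-1)/2 \cdot (\text{something})}$; the trace of $\sg$ on a very regular $x \in \cO_E^\times$ is computed by the standard Weil-representation character formula — only the "fixed-point" contribution survives, and since $x$ acts on the symplectic space $(C\cap P_A^s)/(C\cap P_A^{s+1})$ with no nonzero fixed vectors (very regularity), one gets $\tr\sg(x) = \pm\,\te(x)$ with the sign $(-1)^{n-1}$ extracted from the Weil index / the number of $\sG$-orbits on the relevant space. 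The intertwining hypothesis is verified as before.

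Once $\pi = \Ind_K^G(\sg)$ is shown irreducible and supercuspidal by Theorem \ref{t:supercuspidals-GL-n}, part (c) is a direct computation. The central character is $\te\bigl\lvert_{F^\times}$ by construction since $F^\times \subset E^\times \subset K$ and $\sg\bigl\lvert_{F^\times}$ is $\te\bigl\lvert_{F^\times}$-isotypic. The self-twisting property $\pi \cong \pi \tens (\eps\circ\det_A)$ follows because $\eps \circ \det_A$ is trivial on $U^1_G \supset U^s_G$ (as $\eps$ has level... one checks $\eps\circ\det_A$ restricted to $K$ agrees with $\te^\ga/\te$-type twisting, i.e. $\eps\circ\det_A\bigl\lvert_{E^\times} = \prod_{\ga}\te^\ga / \te^n$-independent — more directly, $\eps \circ N_{E/F} = 1$ and $\det_A\bigl\lvert_{E^\times} = N_{E/F}$, so $\eps\circ\det_A$ is trivial on $E^\times$, and trivial on $U^1_G$ by level reasons), hence $\sg \otimes (\eps\circ\det_A)\bigl\lvert_K \cong \sg$ and inducing gives the claim. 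Finally the character formula: by \eqref{e:character-formula-GL-n}, $\tr\pi(x) = \sum_{g \in K\backslash G,\ gxg^{-1}\in K}\tr\sg(gxg^{-1})$ for very regular $x$; Lemma \ref{l:conjugate-very-regular} (applied with $K \subset F^\times \cdot \cO_A^\times$) shows the only contributions come from $g$ with $gxg^{-1} \in E^\times$, i.e. $g$ in the normalizer $N_G(E^\times)$, and $N_G(E^\times)/E^\times \cong \sG$; summing $\tr\sg(\ga(x)) = (-1)^{r_0(n-1)}\te^\ga(x)$ over $\ga \in \sG$ (using the case computations, where in the even case $\tr\widetilde\te(\ga x) = \te^\ga(x)$ directly and in the odd case the Weil-index sign is raised to the $r_0$-th power since it appears once per orbit) yields $\tr\pi(x) = (-1)^{r_0(n-1)}\sum_{\ga\in\sG}\te^\ga(x)$. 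The main obstacle is the intertwining computation — verifying that no $g \in G \setminus K$ intertwines $(K,\sg)$ — which requires carefully tracking how primitivity of $\te$ rules out off-diagonal intertwiners through all filtration steps, and in the odd case must be combined with the Stone–von Neumann uniqueness to handle the Heisenberg part.
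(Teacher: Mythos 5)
Your proposal follows the right global skeleton (build $K$ and $\sg$, verify the intertwining hypothesis, apply Theorem \ref{t:supercuspidals-GL-n}, then compute the trace on very regular elements), but the two steps you yourself identify as the crux are not actually carried out, and one of them is justified by a false claim. For the intertwining, saying that an intertwiner ``must normalize the relevant filtration pieces and then restrict to $E^\times$'' is not an argument: an element intertwining $(U^r_G,\te_A)$ normalizes nothing a priori. The paper's mechanism (Lemma \ref{l:intertwiners}) is a duality argument: by primitivity one writes $\te\bigl\lvert_{U_E^{r_0-1}}$ via a very regular $y\in\cO_E^\times$ and a level-$r_0$ character $\psi_0$ (Lemma \ref{l:3}); under the trace pairing the character $\te_A$ of $U^r_G$ then corresponds to a coset $z+P_A^{r_0-r}$ with $z\in y+P_E$ very regular, intertwining forces $g^{-1}(z+z_1)g=z+z_2$ with $z_i\in P^{r_0-r}_A$, and Corollary \ref{c:conjugation-restrictions} (itself resting on Lemma \ref{l:2}, which conjugates an element of $x\cdot U^m_G$ into $x\cdot U^m_E$ by an element of $U^m_G$) yields $g\in E^\times\cdot U^{r_0-r}_G$. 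Nothing replacing this appears in your sketch, so neither irreducibility nor supercuspidality is established. The same omission hits your part (c): Lemma \ref{l:conjugate-very-regular} only gives $g\in F^\times\cdot\cO_A^\times$, and the assertion that only cosets with $gxg^{-1}\in E^\times$ contribute is false as stated (the condition in \eqref{e:character-formula-GL-n} is $gxg^{-1}\in E^\times\cdot U^m_G$); reducing the Frobenius sum to a sum over $\sG$ requires exactly the paper's Lemmas \ref{l:1} and \ref{l:2}.

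In the odd case you take the classical Heisenberg/Stone--von Neumann/Weil-representation route (Howe, G\'erardin) rather than the paper's geometric construction; that route can be made to work, but your sketch breaks precisely at the step the paper is designed to bypass. You extend the Heisenberg representation to $E^\times\cdot U^{(r_0-1)/2}_G$ on the grounds that $E^\times$ ``acts trivially on the relevant symplectic space \ldots so no obstruction arises.'' This is wrong: $\cO_E^\times$ acts on $(C\cap P_A^{(r_0-1)/2})/(C\cap P_A^{(r_0+1)/2})\cong\bigoplus_{1\neq\ga\in\sG}\fqn\cdot\ga$ by multiplying the $\ga$-component by $x/\ga(x)$, and indeed your own trace computation relies on a very regular $x$ acting there without nonzero fixed vectors. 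The extension over $\fqn^\times$ is exactly where the Weil representation over the finite field enters, and computing its trace on very regular elements (your ``Weil-index sign raised to the $r_0$-th power, once per orbit'' is not a computation, and when $n$ is even the action of $\fqn^\times$ is genuinely subtle, cf.\ Lemma \ref{l:exercise2}) is the hard point. The paper instead builds $\sg$ by pulling back $H^{n-1}_c(X,\ql)[\psi]$ through the isomorphism $J/J_+\cong U(\fqn)$ and tensoring with $\te$, so that irreducibility, the isotypic restriction to $F^\times\cdot U^1_E\cdot U^{(r_0+1)/2}_G$, and the trace value $(-1)^{n-1}\te(x)$ all come from Theorem \ref{t:main-geometric-theorem}, whose part (c) is a Deligne--Lusztig fixed-point count; after that, irreducibility of $\pi$ again goes through Lemma \ref{l:intertwiners}(a) applied to $(U^{(r_0+1)/2}_G,\te_A)$. (Minor additional sloppiness: $1+(C\cap P_A^{r_0/2})$ is not a subgroup, so your even-case existence argument for $\widetilde\te$ should be rephrased, though that step is indeed easy.)
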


This result, which by itself is not new, is proved in \S\ref{ss:proof-t:GL-n} below. The only new ingredient in our approach is the construction of the representation $\sg$ mentioned in part (b) of the theorem, which is obtained from an action of the group $E^\times\cdot U^{(r_0-1)/2}_G$ on a certain $(n-1)$-dimensional smooth affine hypersurface over $\fqn$ described in \S\ref{ss:geometric-ingredient} below. Similar remarks apply to Theorem \ref{t:division-algebra}, which we state next.

\subsection{The division algebra case} The central division algebra $D=E\langle\Pi\rangle/(\Pi^n-\varpi)$ over $F$ was constructed in \S\ref{ss:notation}. We have $D=\bigoplus_{j=0}^{n-1}E\cdot\Pi^j$ (a direct sum of left and right $E$-submodules). We write $\cO_D=\sum_{j=0}^{n-1}\cO_E\cdot\Pi^j$. For each $m\in\bZ$ we put $P_D^m=\Pi^m\cdot\cO_D$, and if $m\geq 1$, we let $U^m_D=1+P^m_D$. Each $U^m_D$ is a normal compact open subgroup of $D^\times$. If $C'=\sum_{j=1}^{n-1} E\cdot\Pi^j$, then $C'$ is equal to the orthogonal complement of $E$ with respect to the reduced trace pairing on $D$, and $P^m_D=(P^m_D\cap E)\oplus(P^m_D\cap C')$ for each $m\in\bZ$. Moreover, $P^m_D\cap E=P^{\ceil{m/n}}_E$ for all $m$, where $\ceil{a}$ denotes the smallest integer $\geq a$. The next result is proved in \S\ref{ss:proof-t:division-algebra}.

\begin{thm}\label{t:division-algebra}
Let $\te$ be a primitive character of $E^\times$ of level $r_0\geq 2$.

\begin{enumerate}[$($a$)$]
\item Suppose that $r_0$ is odd and let $m=\frac{n(r_0-1)}{2}+1$. There is a unique character $\widetilde{\te}$ of $E^\times\cdot U^m_D$ that restricts to $\te$ on $E^\times$ and is trivial on $1+(C'\cap P^m_D)$. The representation $\rho=\Ind_{E^\times\cdot U^m_D}^{D^\times}(\widetilde{\te})$ of $D^\times$ is irreducible.
 \sbr
\item Suppose $r_0$ is even and let $m=\frac{n(r_0-2)}{2}+1$. There is an irreducible representation $\sg$ of $E^\times\cdot U^m_D$ such that $\tr\sg(x)=(-1)^{(n-1)}\cdot\te(x)$ for each very regular element $x\in\cO_E^\times$ and the restriction of $\sg$ to $F^\times\cdot U^1_E\cdot U^{nr_0/2}_D$ is a direct sum of copies of a character that equals $\te$ on $F^\times\cdot U^1_E$ and is trivial on $1+(C'\cap P^{nr_0/2}_D)$. The representation $\rho=\Ind_{E^\times\cdot U^m_D}^{D^\times}(\sg)$ of $D^\times$ is irreducible.
 \sbr
\item In both cases, $\rho$ has central character $\te\bigl\lvert_{F^\times}$ and satisfies $\pi\cong\pi\tens(\eps\circ\Nrd_{D/F})$, and $\tr\pi(x)=(-1)^{(r_0-1)(n-1)}\cdot\sum_{\ga\in\sG} \te^\ga(x)$ for each very regular element $x\in\cO_E^\times$.
\end{enumerate}
\end{thm}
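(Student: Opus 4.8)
The plan is to mirror the proof of Theorem~\ref{t:GL-n} line by line, with $D^\times$, $\cO_D$, $\Pi$, the filtration $\{U^m_D\}$ and the splitting $P^m_D=(P^m_D\cap E)\oplus(P^m_D\cap C')$ taking over the roles of $G$, $\cO_A$, $\{U^m_G\}$ and $P^m_A=P^m_E\oplus(C\cap P^m_A)$, and with Proposition~\ref{p:division-algebra-auxiliary} in place of Theorem~\ref{t:supercuspidals-GL-n}. (The $D^\times$ case is lighter in one respect: since $D^\times/F^\times$ is compact there is no supercuspidality or finiteness to check, only the intertwining hypothesis of Proposition~\ref{p:division-algebra-auxiliary}.) The only genuinely new input, exactly as in Theorem~\ref{t:GL-n}(b), is the representation $\sg$ of part~(b): it will be produced in \S\ref{ss:geometric-ingredient} from the action of $E^\times\cdot U^m_D$ on an $(n-1)$-dimensional smooth affine hypersurface over $\fqn$, and its trace on very regular elements together with its restriction to $F^\times\cdot U^1_E\cdot U^{nr_0/2}_D$ will be read off from the Grothendieck--Lefschetz trace formula and an explicit fixed-point count. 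The arithmetic mechanism driving both the intertwining arguments and the final character computation is that, for a very regular $x\in\cO_E^\times$, the operator $1-\Ad(x)$ is invertible on each graded piece $P^k_D/P^{k+1}_D$ with $n\nmid k$: on such a piece $\Ad(x)$ acts by multiplication by $\bar x^{1-q^k}$, which is $\neq 1$ since $\bar x$ generates $\fqn$ over $\bF_q$.

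For part~(a) ($r_0$ odd, $m=\tfrac{n(r_0-1)}{2}+1$) I would first dispose of $\widetilde\te$. Uniqueness is clear because $E^\times\cdot U^m_D$ is generated by $E^\times$ and $1+(C'\cap P^m_D)$, the subgroup $1+(P^m_D\cap E)=U^{\ceil{m/n}}_E=U^{(r_0+1)/2}_E$ already lying in $E^\times$. For existence one checks that the prescription ``$\te$ on $E^\times$, trivial on $1+(C'\cap P^m_D)$'' is consistent: $E^\times$ normalizes $C'\cap P^m_D$, so $[E^\times,U^m_D]$ and $[U^m_D,U^m_D]$ both lie in $\pare{1+(C'\cap P^m_D)}\cdot U^{2m}_D$, and since $2m=n(r_0-1)+2$ gives $\ceil{2m/n}=r_0$ we have $U^{2m}_D\cap E^\times=U^{r_0}_E\subseteq\ker\te$; hence $\widetilde\te$ descends to the abelian group $E^\times\cdot U^m_D\big/\pare{(1+(C'\cap P^m_D))\cdot U^{2m}_D}$ and is thereby well-defined. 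I would then verify the hypothesis of Proposition~\ref{p:division-algebra-auxiliary}: writing $D^\times=\bigsqcup_{j=0}^{n-1}\Pi^j\cO_D^\times$ and using that $\Pi$ normalizes $K:=E^\times\cdot U^m_D$, with $\Pi^j$-conjugation sending $\widetilde\te$ to the character attached to $\te^{\vp^j}$, the representatives $\Pi^j$ with $n\nmid j$ fail to intertwine $(K,\widetilde\te)$ because $\te^{\vp^j}\neq\te$ ($\te$ has trivial $\sG$-stabilizer); the remaining case $g\in\cO_D^\times$ is the verbatim $D$-analogue of the corresponding step for Theorem~\ref{t:GL-n}(a), the invertibility of $1-\Ad(x)$ above being exactly what excludes intertwiners outside $K$. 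Proposition~\ref{p:division-algebra-auxiliary} then gives that $\rho$ is irreducible and that~\eqref{e:character-formula-division-algebra} holds.

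For part~(b) ($r_0$ even, $m=\tfrac{n(r_0-2)}{2}+1$) the substance of the argument is the construction of $\sg$ with the stated properties out of the geometry of \S\ref{ss:geometric-ingredient}; granting that, the intertwining hypothesis of Proposition~\ref{p:division-algebra-auxiliary} is checked as in part~(a), with the ``nondegenerate'' character of $F^\times\cdot U^1_E\cdot U^{nr_0/2}_D$ of which $\sg$ restricted there is a multiple playing the part of $\widetilde\te$, so that again $\rho$ is irreducible and~\eqref{e:character-formula-division-algebra} holds. Part~(c) is then obtained by evaluating~\eqref{e:character-formula-division-algebra} at a very regular $x\in\cO_E^\times$. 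By Skolem--Noether $\Pi^jx\Pi^{-j}=\vp^j(x)\in\cO_E^\times\subseteq K$, so each of the $n$ pairwise distinct cosets $K\Pi^j$ ($0\le j\le n-1$) contributes $\tr\sg\pare{\vp^j(x)}$, which is $(-1)^{n-1}\te^{\vp^j}(x)$ in case~(b) and $\te^{\vp^j}(x)$ in case~(a); and the $D$-analogue of Lemma~\ref{l:conjugate-very-regular}---if $g\in D^\times$ conjugates a very regular $x\in\cO_E^\times$ into $K$ then $g\in\bigsqcup_j K\Pi^j$, which one proves by a Hensel-type induction that successively corrects a representative $g\in\cO_D^\times$ into $\cO_E^\times\cdot U^m_D$ using invertibility of $1-\Ad(x)$ on the graded pieces with $n\nmid k$---shows there are no further contributing cosets. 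Summing over $j$, and noting $(-1)^{(r_0-1)(n-1)}$ equals $(-1)^{n-1}$ for $r_0$ even and $1$ for $r_0$ odd, yields $\tr\rho(x)=(-1)^{(r_0-1)(n-1)}\sum_{\ga\in\sG}\te^\ga(x)$. Finally $\rho$ has central character $\te\restr{F^\times}$ because $F^\times=Z(D^\times)\subseteq K$ and $\widetilde\te$ (resp.\ $\sg$) restricts to $\te\restr{F^\times}$ on $F^\times$, and $\rho\cong\rho\tens(\eps\circ\Nrd_{D/F})$ because $\eps\circ\Nrd_{D/F}$ is trivial on $K$: on $E^\times$ it equals $\eps\circ N_{E/F}=1$, and on $U^m_D\subseteq U^1_D$ the reduced norm lands in $U^1_F\subseteq\cO_F^\times\subseteq\ker\eps$.

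The hard part will be part~(b): turning the geometric construction of \S\ref{ss:geometric-ingredient} into a clean, self-contained statement about $\sg$ (irreducibility, the exact form of its restriction to $F^\times\cdot U^1_E\cdot U^{nr_0/2}_D$, and its trace on very regular elements) and then excluding intertwiners of $\sg$ outside $K$---for which, $\sg$ no longer being one-dimensional, one must argue with the Heisenberg-type structure of $U^m_D$ modulo the kernel of the central character of $\sg$ rather than with a single additive character. A more routine but pervasive nuisance throughout is the translation between the $D$-filtration index $m$ and $E$-levels via $P^k_D\cap E=P^{\ceil{k/n}}_E$, which is precisely what dictates the stated values of $m$ in (a) and (b).
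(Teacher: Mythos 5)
Your skeleton is the paper's own: uniqueness and existence of $\widetilde{\te}$, Proposition \ref{p:division-algebra-auxiliary} for irreducibility and the character formula, the graded Hensel-type correction (the paper's Lemma \ref{l:2}(b)) to show that only the cosets coming from $N_{D^\times}(E^\times)$ contribute at a very regular $x$, and the deductions of part (c) (central character, twist-invariance via $\eps\restr{\cO_F^\times}=1$, the sign bookkeeping) exactly as in \S\ref{ss:preliminaries-main}. But the two steps carrying the real content are left as assertions. For the intertwining exclusion, your decomposition $D^\times=\bigsqcup_j\Pi^j\cO_D^\times$ with $\Pi^j$ and $\cO_D^\times$ handled separately does not cover a general $g=\Pi^ju$: intertwining by $g$ is the condition $\Hom_{K\cap u^{-1}Ku}\bigl(\widetilde{\te},(\widetilde{\te}^{\Pi^j})^u\bigr)\neq 0$, a cross-intertwining which is not the conjunction of ``$\Pi^j$ intertwines'' and ``$u$ intertwines''. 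More importantly, ``invertibility of $1-\Ad(x)$'' has nothing to act on until the character is converted into an element: the paper's mechanism (Lemma \ref{l:3}, \S\ref{ss:trace-pairings}, Lemma \ref{l:intertwiners}(b)) chooses $\psi_0$ of level $r_0$ and a very regular $y$ with $\te(1+a)=\psi_0\circ\Tr_{E/F}(ya)$ on $P_E^{r_0-1}$, identifies $\widetilde{\te}\restr{U^m_D}$ with an element $z\in y+P_E$ via $\psi_0\circ\Trd_{D/F}$, so that intertwining by \emph{any} $g\in D^\times$ forces $g^{-1}(z+z_1)g=z+z_2$ with $z_i\in P_D^{n(r_0-1)+1-m}$, and only then does Corollary \ref{c:conjugation-restrictions}(b) give $g\in E^\times\cdot U^{m-1}_D=E^\times\cdot U^m_D$ (using $n\mid m-1$). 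Without this duality step the irreducibility in (a) and (b) is not actually proved.

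Second, part (b) asserts the existence of $\sg$, and you defer its construction entirely to \S\ref{ss:geometric-ingredient}, which only defines $X$ and the finite group $\cR^\times(\fqn)$; the bridge from there to $E^\times\cdot U^m_D$ is precisely the content of the proof and is missing from your proposal. One must set $J=1+P_E^{r_0-1}+(C'\cap P^m_D)$ and $J_+=1+P_E^{r_0}+(C'\cap P^{m+n}_D)$, identify $J/J_+\cong U(\fqn)$ via $1+\varpi^{r_0-1}b+\varpi^{(r_0-2)/2}\sum_{j=1}^{n-1}a_j\Pi^j\mapsto 1+\sum_{j=1}^{n-1}\bar{a}_je_j+\bar{b}e_n$, produce the surjection $E^\times\ltimes J\rar{}\cR^\times(\fqn)$, pull back $H^{n-1}_c(X\tens_{\fqn}\bfq,\ql)[\psi]$ (with $\psi$ induced by $\te$ on $U_E^{r_0-1}/U_E^{r_0}$), tensor with $\te$ and check that the result is trivial on the antidiagonal copy of $U_E^{r_0-1}$ so that it descends to $E^\times\cdot U^m_D$; the trace and restriction statements then come from Theorem \ref{t:main-geometric-theorem}(b),(c). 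Finally, your closing worry that one must argue with a ``Heisenberg-type structure'' to exclude intertwiners of the non-one-dimensional $\sg$ is unnecessary (and is at odds with your own earlier, correct remark): since $\sg\restr{U^{nr_0/2}_D}$ is a multiple of the character $\te_D$, any intertwiner of $(E^\times\cdot U^m_D,\sg)$ intertwines $(U^{nr_0/2}_D,\te_D)$, and the same duality lemma gives $g\in E^\times\cdot U^{n(r_0-1)+1-nr_0/2}_D=E^\times\cdot U^m_D$, with no further representation-theoretic input.
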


\subsection{A geometric ingredient}\label{ss:geometric-ingredient} Recall that $n,r_0\geq 2$ are integers and $q$ is a power of a prime number $p$. We introduce a (noncommutative) ring object $\cR_0$ in the category of affine $\bF_p$-schemes defined as follows. If $B$ is a commutative $\bF_p$-algebra, then $\cR_0(B)$ is the ring consisting of all formal expressions $a_0+a_1\cdot e_1+\dotsc+a_n\cdot e_n$, which are added in the obvious way and multiplied according to the following rules:

\begin{itemize}
\item $e_i\cdot a=a^{q^i}\cdot e_i$ for all $1\leq i\leq n$ and all $a\in B$
 \sbr
\item if $r_0=2$, then for all $i,j\geq 1$,
\[
e_i\cdot e_j =
\begin{cases}
e_{i+j} & \text{if } i+j\leq n, \\
0 & \text{otherwise}.
\end{cases}
\]
 \sbr
\item if $r_0>2$, then for all $i,j\geq 1$,
\[
e_i\cdot e_j =
\begin{cases}
e_n & \text{if } i+j=n, \\
0 & \text{otherwise}.
\end{cases}
\]
\end{itemize}

\begin{rems}
By construction, the additive group of $\cR_0$ is identified with $\bG_a^{n+1}$. If $r_0=2$, we see that we can identify $\cR_0(B)$ with the quotient $B\langle\tau\rangle/(\tau^{n+1})$, where the twisted polynomial ring $B\langle\tau\rangle$ is defined by the commutation relation $\tau\cdot a=a^q\cdot\tau$ for all $a\in B$. If $r_0>2$, then $\cR_0$ is independent of $r_0$.
\end{rems}

The multiplicative group $\cR_0^\times\subset\cR_0$ is given by $a_0\neq 0$. Let $U_0\subset\cR_0^\times$ be the subgroup defined by $a_0=1$. Then $U_0$ is a noncommutative $n$-dimensional connected unipotent algebraic group over $\bF_p$. We have a natural embedding $\bG_m\into\cR_0^\times$ that yields a semidirect product decomposition $\cR_0^\times=\bG_m\ltimes U_0$.

\mbr

We write $\cR^\times$ and $U$ for the algebraic groups over $\fqn$ obtained from $\cR^\times_0$ and $U_0$ by base change. Let $\Fr_{q^n}$ denote the $q^n$-power Frobenius morphism on $\cR_0$; for each commutative $\bF_p$-algebra $B$, the corresponding map $\Fr_{q^n}:\cR_0(B)\rar{}\cR_0(B)$ is given by $\sum_{i=0}^n a_i e_i \mapsto \sum_{i=0}^n a_i^{q^n}e_i$ (where we write $e_0=1$). We also denote by $\Fr_{q^n}$ the corresponding endomorphisms of the groups $\cR_0^\times,\cR^\times,U_0,U$. Finally, we write $L_{q^n}:U\rar{}U$ for the Lang isogeny, defined by $g\mapsto \Fr_{q^n}(g)g^{-1}$.

\begin{defin}
We set $X=L_{q^n}^{-1}(Y)$, where $Y\subset U$ is the hyperplane defined by $a_n=0$. The finite group $U(\fqn)$ acts on $X$ by right translation. In addition, the conjugation action of the subgroup $\fqn^\times\subset\cR^\times(\fqn)$ preserves $X$, and we thus obtain a right action of the group $\fqn^\times\ltimes U(\fqn)=\cR^\times(\fqn)$ on $X$. From now on we view $X$ as a variety over $\fqn$ equipped with this right action\footnote{Explicitly, the action is given by $x\bullet(\ga,u)=\ga^{-1}x\ga u$ for $x\in X$, $\ga\in\fqn^\times$, $u\in U(\fqn)$.} of the group $\cR^\times(\fqn)$.
\end{defin}

For any integer $j\geq 0$, the compactly supported cohomology $H^j_c(X\tens_{\fqn}\bfq,\ql)$ inherits a left action of $\cR^\times(\fqn)$ and becomes a finite dimensional representation of this finite group. Let us also observe that the center of the group $U(\fqn)$ is equal to $\{1+a_ne_n\st a_n\in\fqn\}$, so it can be naturally identified with $\fqn$, and it commutes with $\cR^\times(\fqn)$. In particular, if $\psi$ is any additive character of $\fqn$, the $\psi$-isotypic subspace $H^j_c(X\tens_{\fqn}\bfq,\ql)[\psi]$ (i.e., the subspace of the cohomology on which the center of $U(\fqn)$ acts via $\psi$) is an $\cR^\times(\fqn)$-subrepresentation of $H^j_c(X\tens_{\fqn}\bfq,\ql)$.

\begin{thm}\label{t:main-geometric-theorem}
Let $\psi:\fqn\rar{}\qls$ be a character with trivial $\Gal(\fqn/\bF_q)$-stabilizer.

\begin{enumerate}[$($a$)$]
\item $H^{n-1}_c(X\tens_{\fqn}\bfq,\ql)[\psi]$ is irreducible as a representation of $U(\fqn)$.
 \sbr
\item $H^j_c(X\tens_{\fqn}\bfq,\ql)[\psi]=0$ for all $j\neq n-1$.
 \sbr
\item If $\ze\in\fqn^\times\subset\cR^\times(\fqn)$ is any element that has trivial $\Gal(\fqn/\bF_q)$-stabilizer, then the trace of $\ze$ on $H^{n-1}_c(X\tens_{\fqn}\bfq,\ql)[\psi]$ equals $(-1)^{n-1}$.
\end{enumerate}
\end{thm}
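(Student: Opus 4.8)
The plan is to compute the cohomology of $X$ by relating it to a classical Deligne--Lusztig-type variety. Observe first that $X = L_{q^n}^{-1}(Y)$ sits inside $U$, and $L_{q^n}\colon U \to U$ is a Lang isogeny with kernel $U(\fqn)$; thus $X$ is a $U(\fqn)$-torsor over $Y \subset U$. The key structural remark is that, since the center $Z \cong \fqn$ of $U(\fqn)$ (spanned by $1+a_n e_n$) commutes with all of $\cR^\times(\fqn)$, fixing the central character $\psi$ singles out a well-behaved piece. The first step is therefore to analyze the quotient geometry: push forward along the central $Z \cong \mathbb{G}_a$-action. Writing $\bar U = U/\{1+a_n e_n\}$ (a commutative unipotent group when $r_0 > 2$, essentially $\mathbb{G}_a^{n-1}$) and letting $\bar X$ be the image of $X$, one gets $X \to \bar X$ an $\fqn$-torsor; the $\psi$-isotypic cohomology $H^*_c(X)[\psi]$ is computed from a rank-one local system $\cL_\psi$ on $\bar X$ obtained by pushing the Artin--Schreier sheaf through the relevant additive map. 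So $H^j_c(X\tens\bfq,\ql)[\psi] = H^j_c(\bar X \tens \bfq, \cL_\psi)$.

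Next I would make the variety $\bar X$ explicit. Unwinding $L_{q^n}(g)g^{-1} \in Y$ for $g = 1 + x_1 e_1 + \dots + x_n e_n$, the condition $a_n(L_{q^n}(g)g^{-1}) = 0$ becomes, modulo the center, a single equation in $x_1,\dots,x_{n-1}$ of the shape $x_i^{q^n} - x_i = (\text{lower degree in the } x_j)$ together with one extra polynomial relation coming from the $e_n$-coefficient and the multiplication rules (this is where the case split $r_0 = 2$ versus $r_0 > 2$ enters, but the two cases should give affine hypersurfaces that are abstractly equivalent for cohomological purposes). After a change of coordinates killing the $x_i^{q^n} - x_i$ terms, $\bar X$ is seen to be (Frobenius-twisted-)isomorphic to a smooth affine hypersurface $\{ \sum \text{monomial terms} = z \}$ in $\mathbb{A}^{n-1}$, and the local system $\cL_\psi$ is the Artin--Schreier pullback $\psi^*$ of that equation. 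At this point parts (a) and (b) become a statement that this exponential sum sheaf is concentrated in the middle degree $n-1$ and that the middle cohomology is irreducible under $U(\fqn)$: irreducibility follows from a Mackey/Clifford-theory argument — a character of $U(\fqn)$ with central character $\psi$ having trivial $\sG$-stabilizer forces the induced representation from $Z$ (or from a maximal abelian subgroup) to be irreducible, and a dimension count via the Lefschetz trace formula for $1 \in U(\fqn)$ pins down $\dim H^{n-1}_c[\psi] = q^{(n-1)(n-?)/\dots}$ matching $|U(\fqn)|^{1/2}$-type bounds; concentration in one degree is the Katz-style ``the sheaf is a middle-extension and $\bar X$ is affine smooth of dimension $n-1$, so only $H^{n-1}_c$ and $H^{2(n-1)}_c$ can survive, and the latter vanishes because $\cL_\psi$ is nonconstant on every fiber'' argument.

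For part (c), the trace of a very regular $\ze \in \fqn^\times$ on $H^{n-1}_c[\psi]$ is computed by the Grothendieck--Lefschetz trace formula applied to the automorphism $x \mapsto \ze^{-1} x \ze$ of $X$ (composed with the action of $\ze$ on coefficients), summed against $\psi$ on the center: $\sum_j (-1)^j \tr(\ze \mid H^j_c[\psi]) = \sum_{x \in X^{\ze}(\bfq)} \psi(\dots)$, and by (b) the left side is $(-1)^{n-1}\tr(\ze \mid H^{n-1}_c[\psi])$. One then shows the $\ze$-fixed-point set $X^\ze$ is a single point (using that $\ze$ acting by conjugation on $U$ has no nonzero fixed vectors among $e_1,\dots,e_{n-1}$ precisely because $\ze$ has trivial $\sG$-stabilizer, so $\ze \neq \ze^{q^i}$ for $0 < i < n$), and that $\psi$ takes value $1$ at that fixed point (it lies over the origin of $\bar X$). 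Hence the exponential sum is $1$, giving $\tr(\ze \mid H^{n-1}_c[\psi]) = (-1)^{n-1}$. The main obstacle I anticipate is the bookkeeping in the second step: getting $\bar X$ and $\cL_\psi$ into a clean enough normal form that the vanishing and irreducibility statements reduce to known results about Artin--Schreier sheaves on affine hypersurfaces, while simultaneously handling the $r_0 = 2$ versus $r_0 > 2$ multiplication rules uniformly; a secondary subtlety is verifying the fixed-point count $|X^\ze| = 1$ with the correct multiplicity $1$ (transversality of the fixed point), which requires checking the differential of the conjugation-by-$\ze$ map has no eigenvalue $1$ on the tangent space to $X$.
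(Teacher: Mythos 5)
Your overall architecture (pass to the $\psi$-isotypic part, realize it as the cohomology of a rank-one Artin--Schreier-type local system on an affine space, then get (c) from a fixed-point formula) is parallel to the paper's, but the two steps that carry the actual content have genuine gaps. First, your argument for concentration in degree $n-1$ is wrong as stated: for a smooth affine variety of dimension $n-1$ with a rank-one local system, Artin vanishing plus duality only kills $H^j_c$ for $j<n-1$; every degree $n-1\leq j\leq 2(n-1)$ can a priori survive, not just the two extremes, so ``the top degree vanishes because $\cL_\psi$ is nonconstant'' does not finish part (b). The concentration is the heart of the theorem, and the paper proves it by an explicit induction: after the reduction (via the section $s:U/H\to U$ and \cite[Prop.~2.3]{DLtheory}) to $H^j_c(\bA^{n-1},\al^*\cL_\psi)$, the polynomial $\al$ contains the pair of monomials $a_{n-j}a_j^{q^{n-j}}-a_j^{q^n}a_{n-j}^{q^j}$, which lets one apply \cite[Prop.~2.10]{DLtheory} to eliminate the two variables $a_j,a_{n-j}$ at the cost of a degree shift by $2$ (Lemma \ref{l:induction-step}), terminating in a one- or two-variable base case computed via the Swan conductor at infinity and Grothendieck--Ogg--Shafarevich (Lemma \ref{l:induction-base}). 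Nothing in your sketch substitutes for this. Second, for irreducibility you need the precise dimension: Clifford theory (Lemma \ref{l:exercise2}) only tells you that $H^{n-1}_c[\psi]$ is a multiple of the unique irreducible with central character $\psi$, so you must prove the multiplicity is one, i.e.\ $\dim H^{n-1}_c[\psi]=q^{n(n-1)/2}$; your ``dimension count via the Lefschetz trace formula for $1$'' with the exponent left as ``$?$'' is exactly the missing computation (a Lefschetz formula at the identity does not give dimensions without purity or an Euler-characteristic argument, which again is what the induction supplies).

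In part (c) there is a concrete error: $X^\ze$ is not a single point. Conjugation by $\ze$ sends $1+\sum a_je_j$ to $1+\sum a_j\ze^{q^j-1}e_j$, so triviality of the $\Gal(\fqn/\bF_q)$-stabilizer kills $a_1,\dotsc,a_{n-1}$ but leaves $a_n$ free, and intersecting with $X$ gives the $q^n$ points $1+be_n$, $b\in\fqn$. The correct argument (the paper's) applies the Deligne--Lusztig fixed point formula to the commuting pair consisting of $\ze$ (order prime to $p$) and the central element $1+ae_n$ (order $p$), obtains $q^n$ fixed points if $a=0$ and none otherwise, and then projects to the $\psi$-part by averaging against $\psi(a)$, using part (b) to isolate degree $n-1$; your worry about transversality of the fixed point is beside the point, since for an automorphism of order prime to $p$ the fixed-point scheme is automatically smooth and the formula needs no such hypothesis. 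Your final value $(-1)^{n-1}$ is right, but the fixed-point analysis supporting it is not, and in any case (c) cannot be closed before (b) is.
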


This result is proved in \S\ref{s:proof-t:main-geometric-theorem} below. It is contained in one of the main results of \cite{maximal-varieties-LLC}, but the argument we give in \S\ref{s:proof-t:main-geometric-theorem} is easier to follow because the more general result proved in \emph{op.~cit.} involves a higher number of substantial ingredients.

\begin{rems}
\begin{enumerate}[(1)]
\item It follows from Lemma \ref{l:exercise2} that $H^{n-1}_c(X\tens_{\fqn}\bfq,\ql)[\psi]$ is the \emph{unique} irreducible representation of $U(\fqn)$ with central character $\psi$. It is also not hard to obtain an explicit realization thereof as the representation of $U(\fqn)$ induced from a $1$-dimensional representation of a suitable subgroup. On the other hand, the action of $\fqn^\times\subset\cR^\times(\fqn)$ is more subtle when $n$ is even.
 \sbr
\item The harder statement, proved in \cite{maximal-varieties-LLC}, is that if $\psi:\fqn\rar{}\qls$ is an \emph{arbitrary} character, then there exists an integer $n-1\leq k\leq 2n-2$ (depending on the stabilizer of $\psi$ in $\Gal(\fqn/\bF_q)$) such that $H^{j}_c(X\tens_{\fqn}\bfq,\ql)[\psi]$ vanishes for $j\neq k$, and is irreducible as a representation of $U(\fqn)$ for $j=k$.
 \sbr
\item Since $X$ is defined over $\fqn$, the Frobenius $\vp_{q^n}\in\Gal(\bfq/\fqn)$ acts on each space $H^{j}_c(X\tens_{\fqn}\bfq,\ql)$. It is proved in \emph{op.~cit.} that it acts by the scalar $(-1)^{j} q^{-nj/2}$.
 \sbr
\item As we will see, part (c) of the theorem is an easy consequence of part (b) and the fixed point formula of Deligne and Lusztig \cite{deligne-lusztig}.
\end{enumerate}
\end{rems}

\section{Frobenius character formula}\label{s:Frobenius-character-formula}

This section is independent of the rest of the text. Following the appendix of \cite{Henniart-MathNachr1992}, we sketch a proof of the fact that the Frobenius character formula is valid for certain induced representations of totally disconnected groups. Many different versions of this result are available in the literature, going back at least to \cite{Sally-some-remarks,Kutzko-supercuspidal-character-formulas} (which rely on earlier integral formulas of Harish-Chandra). We prefer the approach of \cite{Henniart-MathNachr1992} since it allows one to isolate the purely formal part of the argument from the part that relies on the structure theory of $p$-adic reductive groups.

\subsection{Setup} Let $G$ be a Hausdorff locally compact totally disconnected topological group. We write $Z$ for the center of $G$ and $C_G(g)$ for the centralizer of a given element $g\in G$. Consider an open subgroup $J\subset G$ such that $Z\subset J$ and $J/Z$ is compact. Let $\sg$ be a smooth finite dimensional representation of $J$ such that the compactly induced representation $\rho:=c-\Ind_J^G(\sg)$ is admissible. For every $g\in G$, we will write $n_J(g)$ for the number of right cosets $Jx$ of $J$ in $G$ such that $xgx^{-1}\in J$; thus $n_J(g)$ is either a nonnegative integer or $+\infty$.

\begin{thm}\label{t:Frobenius-char-formula}
Let $g\in G$ and fix a left Haar measure $\mu$ on $G$.

\begin{enumerate}[$($a$)$]
\item Suppose that $C_G(g)/Z$ is compact, $n_J(g)<\infty$ and there is an open neighborhood $U$ of $g$ in $G$ such that $n_J(y)=n_J(g)$ for all $y\in U$. Then for every open subgroup $N'\subset G$, there exists a compact open subgroup $N\subset N'$ such that
    \begin{equation}\label{e:Frobenius-character-formula}
    \frac{1}{\mu(N)}\cdot \tr\left(\int_{z\in N} \rho(gz)\,d\mu(z)\right) = \sum_{x\in J\setminus G,\ xgx^{-1}\in J} \tr(\sg(xgx^{-1})).
    \end{equation}
 \sbr
\item Let $F$ be a nondiscrete locally compact non-Archimedean field and $G=GL_n(F)$. Then the assumptions of (a) are satisfied for every regular elliptic $g\in G$.
\end{enumerate}
\end{thm}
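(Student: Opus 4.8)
The plan is to separate the argument into the ``soft'' formal part (a) and the ``hard'' structural part (b), exactly as the surrounding text advertises. For part (a), I would begin by choosing explicit coset representatives: write $G=\bigsqcup_{i\in I} Jx_i$ and realize $\rho=c\text{-}\Ind_J^G(\sg)$ on the space $V$ of functions $f:G\to\sg$ with $f(hg)=\sg(h)f(g)$ for $h\in J$, compactly supported modulo $J$. The operator $\rho(gz)$ for $z$ in a small open subgroup $N$ permutes the ``blocks'' indexed by $I$, and the key observation is that because $C_G(g)/Z$ is compact and $n_J(g)<\infty$, the averaging operator $\frac{1}{\mu(N)}\int_N\rho(gz)\,d\mu(z)$ has trace that only picks up contributions from those blocks $x_i$ with $x_i g x_i^{-1}\in J$: for all other blocks the block either maps to a different block (contributing $0$ to the trace) or the integral over $N$ of the corresponding finite-dimensional operator has trace $0$ once $N$ is chosen small enough. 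The hypothesis $n_J(y)=n_J(g)$ on a neighbourhood $U$ of $g$ guarantees that one can shrink $N$ so that $gz$ has exactly the same set of ``diagonal'' blocks as $g$ for every $z\in N$; on each such block the operator $\frac{1}{\mu(N)}\int_N\sg(x_i g z x_i^{-1})\,d\mu(z)$ is a projection-like average whose trace, for $N$ small, equals $\tr\sg(x_i g x_i^{-1})$. Summing over the finitely many diagonal blocks gives the right-hand side of \eqref{e:Frobenius-character-formula}; admissibility of $\rho$ is what makes all the traces involved finite and the interchange of sum and integral legitimate. This is bookkeeping with convolution operators and I expect no real obstacle, only care in quantifying ``$N$ small enough'' uniformly in the finitely many relevant $x_i$.

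For part (b), the task is to verify the three hypotheses of (a) when $G=GL_n(F)$ and $g$ is regular elliptic. The centralizer $C_G(g)$ is then the multiplicative group of the field $F[g]\cong F(\alpha)$, where $\alpha$ is a root of the (separable, irreducible, degree $n$) characteristic polynomial of $g$; since $F(\alpha)/F$ has degree $n$, the quotient $C_G(g)/Z=F(\alpha)^\times/F^\times$ is compact because $F(\alpha)^\times/\cO_{F(\alpha)}^\times\cong\bZ$ maps to $F^\times/\cO_F^\times\cong\bZ$ with finite cokernel. The finiteness $n_J(g)<\infty$ and the local constancy $n_J(y)=n_J(g)$ near $g$ are the substantive points: I would deduce them from the fact that, for $J$ open with $Z\subset J$ and $J/Z$ compact, the condition $xgx^{-1}\in J$ forces $x$ to lie in a bounded (modulo $Z$) subset of $G$, which combined with the compactness of $C_G(g)/Z$ bounds the number of cosets $Jx$; the standard reference for this is the appendix of \cite{Henniart-MathNachr1992} and it is essentially the statement, implicit in \S\ref{s:Frobenius-character-formula}, that the sum on the right of \eqref{e:character-formula-GL-n} is finite for regular elliptic $x$. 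Local constancy of $n_J$ then follows because both ``$xgx^{-1}\in J$ is open in $g$'' and ``the relevant set of cosets is finite and stable'' are open conditions: shrink a neighbourhood of $g$ so that none of the finitely many cosets enters or leaves the diagonal set.

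\textbf{Main obstacle.} The delicate step is the uniform choice of the compact open subgroup $N$ in part (a): one must simultaneously arrange, for each of the finitely many diagonal representatives $x_i$, that $\frac{1}{\mu(N)}\int_N\sg(x_i g z x_i^{-1})\,d\mu(z)$ equals $\tr\sg(x_i g x_i^{-1})$ after taking trace, and that the non-diagonal blocks genuinely contribute nothing. The cleanest route is to pick $N\subset N'$ small enough that $x_i N x_i^{-1}$ lies in the kernel of $\sg$ (possible since $\sg$ is smooth) for every $i$ in the finite diagonal set, which reduces the block averages to honest identities; for the non-diagonal blocks one uses that $\rho$ is admissible so the relevant infinite sum truncates and each surviving summand is a genuine off-diagonal entry with zero trace. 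Once $N$ is chosen this way the identity \eqref{e:Frobenius-character-formula} is immediate. Everything else is either soft functional analysis on $p$-adic groups or the well-known structure theory of tori in $GL_n$, both of which may be cited from \cite{Henniart-MathNachr1992}.
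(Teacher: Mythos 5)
Your plan for part (a) is essentially sound, and it is in fact a more direct route than the one the paper takes: after the preliminary shrinking that makes every coset ``all in or all out'' (which is precisely how the hypothesis $n_J(y)=n_J(g)$ on a neighborhood of $g$ gets used), choosing $N$ with $gN\subset U$ and $x_iNx_i^{-1}\subset\Ker(\sg)$ reduces each diagonal block of $\frac{1}{\mu(N)}\int_N\rho(gz)\,d\mu(z)$ to $\sg(x_ig x_i^{-1})$, and since $\sg$ is finite dimensional and $\rho(f)$ has finite rank by admissibility, the trace really is the finite sum of diagonal block traces. Note two points of comparison: the paper instead chooses $N$ normalized by $C_G(g)$ (this is where the compactness of $C_G(g)/Z$ enters) and computes via the Mackey decomposition of the restriction of $\rho$ to $H=C_G(g)\cdot N$, converting a sum over $J\backslash G/H$ back into a sum over $J\backslash G$; your version never uses the compactness of $C_G(g)/Z$ in (a), which is legitimate, but the blockwise trace bookkeeping (the image of $\rho(f)$ lies in finitely many blocks because it is finite dimensional and the induced space is an algebraic direct sum; non-diagonal blocks contribute nothing by the all-or-nothing dichotomy, not by admissibility) needs to be written out rather than waved at.

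The genuine gap is the local constancy of $n_J$ near $g$ in part (b). Your argument that ``both are open conditions: shrink a neighbourhood of $g$ so that none of the finitely many cosets enters or leaves the diagonal set'' only proves $n_J(y)\geq n_J(g)$ for $y$ near $g$, because it controls only the finitely many cosets $Jx_i$ that are already diagonal. It does not rule out that, for $y$ arbitrarily close to $g$, some of the infinitely many remaining cosets become diagonal: for each individual coset this is prevented on some neighborhood of $g$, but an intersection of infinitely many neighborhoods need not be a neighborhood, so ``openness'' is not the right reasoning here. What is needed is a uniform statement over all of $J\backslash G$, and this is exactly Lemma \ref{l:auxiliary-HC}: Harish-Chandra's compactness lemma (Lemma 19 of \cite{harish-chandra-van-dijk-notes}), applied to a whole neighborhood $\Om$ of $g$ in $K^\times$, shows that only finitely many cosets $Jx$ satisfy $x\Om x^{-1}\cap J\neq\varnothing$, and after shrinking $\Om$ one obtains the dichotomy $x\Om x^{-1}\subset J$ or $x\Om x^{-1}\cap J=\varnothing$. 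Even then one has constancy of $n_J$ only on a neighborhood of $g$ inside $K^\times$, not inside $G$; the paper transfers it to a $G$-neighborhood using the openness of the conjugation map $G\times K^\times\rar{}G$, $(\ga,y)\mapsto \ga y\ga^{-1}$, near $(1,g)$ (Harish-Chandra's submersion principle \cite{Harish-Chandra-submersion}) together with the conjugation invariance of $n_J$. Both ingredients --- the neighborhood form of the compactness lemma and the submersion step --- are absent from your sketch, so the hypothesis of (a) that $n_J$ is constant near $g$ is not established. (A minor further slip: the compactness lemma bounds $x$ modulo the centralizer $K^\times$, not modulo $Z$; one then uses the compactness of $K^\times/F^\times$ to conclude finiteness of the set of cosets.)
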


\begin{rems}
\begin{enumerate}[(1)]
\item The right hand side of formula \eqref{e:Frobenius-character-formula} makes sense if $n_J(g)<\infty$. The integral on the left hand side is the same as $\rho(f)$, where $f\in C^\infty_c(G)$ is the indicator function of the coset $gN$. Since $\rho$ is assumed to be admissible, the left hand side of \eqref{e:Frobenius-character-formula} also makes sense.
 \sbr
\item In the situation of part (b), part (a) is equivalent to the formula proved in the appendix of \cite{Henniart-MathNachr1992}. Indeed, the character of $\rho$ (viewed as a generalized function on $G$) is given by a locally constant function on the set of regular semisimple elements of $G$, and if $N$ is any sufficiently small compact open subgroup of $G$, then the value of that function at $g$ is equal to the left hand side of \eqref{e:Frobenius-character-formula}.
\end{enumerate}
\end{rems}

\subsection{Proof of part (a)} Write $n=n_J(g)$ and let $Jx_1,Jx_2,\dotsc,Jx_n$ be all the right cosets of $J$ in $G$ such that $x_i gx_i^{-1}\in J$. Since $J$ is open, by shrinking $U$ if necessary we may assume that $x_i Ux_i^{-1}\subset J$ for every $i$. It follows that if $y\in U$, then $x_i yx_i^{-1}\in J$ for every $i$, so since $n_J(y)=n$ by assumption, we have $xyx^{-1}\not\in J$ whenever $Jx$ is not equal to one of the cosets $Jx_i$. This means that after shrinking $U$, we may assume that for every $x\in G$, we have either $xUx^{-1}\subset J$ or $xUx^{-1}\cap J=\varnothing$.

\mbr

Let $N'\subset G$ be an open subgroup. Shrinking $N'$ if necessary, we may assume that $N'$ is compact, $gN'\subset U$ and $x_i N'x_i^{-1}\subset\Ker(\sg)\subset J$ for each $1\leq i\leq n$, where the $x_i$ are as in the previous paragraph. Since $C_G(g)/Z$ is compact and $N'$ is open in $G$, the normalizer of $N'$ in $C_G(g)$ has finite index in $C_G(g)$, so there exists a compact open subgroup $N\subset N'$ normalized by $C_G(g)$. Let us prove that formula \eqref{e:Frobenius-character-formula} holds for this subgroup $N$. Form $H=C_G(g)\cdot N$; this is an open subgroup of $G$ and $H/Z$ is compact. By a standard argument, the restriction of $\rho$ to $H$ decomposes as
\[
\rho\bigl\lvert_H = \bigoplus_{x\in J\setminus G/H} \Ind_{H\cap x^{-1}Jx}^H \bigl(\sg^x\bigl\lvert_{H\cap x^{-1}Jx}\bigr)
\]
where for every $x\in G$, we write $\sg^x$ for the representation of $x^{-1}Jx$ defined by the formula $\sg^x(\ga)=\sg(x\ga x^{-1})$; there is no need to use compact induction here because $H\cap x^{-1}Jx$ has finite index in $H$. Write $\rho_x:=\Ind_{H\cap x^{-1}Jx}^H \bigl(\sg^x\bigl\lvert_{H\cap x^{-1}Jx}\bigr)$. The character of $\rho_x$ can be calculated via the usual Frobenius formula:
\[
\tr(\rho_x(h)) = \sum_{\ga\in (H\cap x^{-1}Jx)\setminus H} \tr\bigl(\sg(x\ga h\ga^{-1}x^{-1})\bigr).
\]

\mbr

Let us calculate $\tr(\rho_x(gz))$ for each $z\in N$. We note that since $N$ is normal in $H$ and $H=C_G(g)N$, the coset $gN$ is stable under $H$-conjugation. There are two possibilities: either $gN\cap x^{-1}Jx=\varnothing$, in which case we find that $\tr(\rho_x(gz))=0$ for all $z\in N$, or $Jx=Jx_i$ for some $i$, in which case, by the previous part of the proof, we find that $gN\subset x^{-1}Jx$ and $N\subset\Ker(\sg^x)\subset x^{-1}Jx$; this $g\in x^{-1}Jx$ as well. In the latter case, we obtain $\tr(\rho_x(gz))=[H:H\cap x^{-1}Jx]\cdot\tr(\sg^x(g))$ for all $z\in N$.

\mbr

Let $\e_{gN}$ be the indicator function of $gN$ and $f:=\frac{1}{\mu(N)}\cdot\e_{gN}\in C^\infty_c(H)\subset C^\infty_c(G)$. The left hand side of \eqref{e:Frobenius-character-formula} is equal to $\tr(\rho(f))$, which is the same as $\sum\limits_{x\in J\setminus G/H}\tr(\rho_x(f))$. By the previous paragraph, given $x\in G$, we have $\tr(\rho_x(f))=0$ if $g\not\in x^{-1}Jx$ and $\tr(\rho_x(f))=[H:H\cap x^{-1}Jx]\cdot\tr(\sg(xgx^{-1}))$ if $g\in x^{-1}Jx$. Therefore
\begin{eqnarray*}
\sum\limits_{x\in J\setminus G/H}\tr(\rho_x(f)) &=& \sum_{x\in J\setminus G/H,\ xgx^{-1}\in J}[H:H\cap x^{-1}Jx]\cdot\tr(\sg(xgx^{-1})) \\ &=& \sum_{x\in J\setminus G,\ xgx^{-1}\in J} \tr(\sg(xgx^{-1})),
\end{eqnarray*}
which proves (a).

\subsection{Proof of part (b)} Let $g\in G=GL_n(F)$ be a regular elliptic element. Define $K=F[g]$ as the $F$-subalgebra of $Mat_n(F)$ generated by $g$. Then $K$ is a separable field extension of $F$ of degree $n$ and $C_G(g)\cong K^\times$, so $C_G(g)/Z\cong K^\times/F^\times$ is compact.

\begin{lem}\label{l:auxiliary-HC}
$n_J(g)<\infty$ and there exists an open neighborhood $\Om$ of $g$ in $K^\times$ such that for each $x\in G$, we have either $x\Om x^{-1}\subset J$ or $x\Om x^{-1}\cap J=\varnothing$.
\end{lem}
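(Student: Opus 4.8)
The plan is to reduce everything to two classical compactness facts about the element $g$: that its centralizer $K^\times = C_G(g)$ is compact modulo the centre $F^\times$, and that the conjugacy class of $g$ (or rather of a small neighborhood of $g$ in $K^\times$) is well-behaved. First I would fix a small compact open neighborhood $\Om_0$ of $g$ inside $K^\times = C_G(g)$, say $\Om_0 = g\cdot V$ where $V$ is a compact open subgroup of $K^\times$, and shrink it later as needed. The key geometric input is the following: the map $G/K^\times \times \Om_0 \to G$, $(x K^\times, y)\mapsto x y x^{-1}$, has the property that its fibers are controlled because the characteristic polynomial separates regular elliptic conjugacy classes; concretely, if $x y x^{-1}\in \cO_A^\times\cdot F^\times$ for $y$ near $g$, then $x$ lies in a \emph{fixed} compact set modulo $K^\times$ (this is the kind of statement made precise by Lemma~\ref{l:conjugate-very-regular} and the general lattice-stabilizer argument behind it). So I would argue that the set $\{x\in K^\times\backslash G : x\Om_0 x^{-1}\cap J\neq\varnothing\}$ is finite, provided $\Om_0$ is small enough, using that $J$ is contained in a set of the form $F^\times\cdot(\text{compact open subgroup})$, hence $J/F^\times$ is compact.

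The proof of finiteness of $n_J(g)$ itself: since $J$ is open and $Z=F^\times\subset J$ with $J/Z$ compact, and since $g$ is regular elliptic so that $C_G(g)=K^\times$ with $K^\times/F^\times$ compact, the double coset space $J\backslash G/C_G(g)$ has the property that only finitely many double cosets $J x C_G(g)$ satisfy $x g x^{-1}\in J$. This is because $x g x^{-1}\in J$ forces the $\cO_F$-lattice $x^{-1}(\text{standard lattice})$ to be one of finitely many $C_G(g)$-orbits of lattices stable under $g$ (again via the $\cO_F[g]=\cO_K$-module structure and the theory of fractional ideals of $\cO_K$, which form finitely many classes modulo the free action by powers of a uniformizer of $K$, the latter lying in $C_G(g)$). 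Each such double coset $J x C_G(g)$ contributes finitely many single cosets $Jx'$ because $(x^{-1}Jx)\cap C_G(g)$ is open in $C_G(g)$ and $C_G(g)/Z$ is compact while $J/Z$ is compact. Hence $n_J(g)<\infty$.

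Next, for the ``local constancy''/separation statement, I would let $Jx_1,\dots,Jx_n$ be the finitely many cosets with $x_i g x_i^{-1}\in J$. Since $J$ is open, after shrinking $\Om_0$ we may assume $x_i\Om_0 x_i^{-1}\subset J$ for each $i$, so that $x_i\Om_0 x_i^{-1}\cap J\neq\varnothing$ trivially. The content is the converse: for $x$ in \emph{any} other coset, $x\Om_0 x^{-1}\cap J=\varnothing$ once $\Om_0$ is small. Suppose not; then there is a sequence $x^{(k)}$, pairwise in distinct cosets mod $J$, and $y^{(k)}\to g$ with $x^{(k)} y^{(k)} (x^{(k)})^{-1}\in J$. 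Using that $J\subset F^\times\cdot\cO_A^\times$ (or an analogous compact-open containment) and that $\det$ is locally constant, we may replace the $x^{(k)}$ by representatives in $\cO_A^\times$ times $F^\times$; by compactness of $\cO_A^\times$, pass to a subsequence converging to some $x^{(\infty)}$, and since $y^{(k)}\to g$ we get $x^{(\infty)} g (x^{(\infty)})^{-1}\in\overline{J}=J$ ($J$ being open, hence closed). So $Jx^{(\infty)}=Jx_i$ for some $i$, and then $J x^{(k)}=Jx_i$ for $k\gg 0$ because cosets of the open subgroup $J$ are open — contradicting that the $x^{(k)}$ lie in distinct cosets. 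This shows that, after shrinking $\Om_0=:\Om$, for every $x\in G$ one has either $x\Om x^{-1}\subset J$ (when $Jx\in\{Jx_1,\dots,Jx_n\}$) or $x\Om x^{-1}\cap J=\varnothing$; in particular $n_J(y)=n_J(g)=n$ for all $y\in\Om$.

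I expect the main obstacle to be the compactness argument showing that only finitely many cosets $Jx$ have $xgx^{-1}\in J$ and that this is stable under perturbing $g$ — i.e.\ making rigorous the passage ``$x g x^{-1}\in J \Rightarrow x$ lies in finitely many $J$-cosets.'' The clean way is to phrase it lattice-theoretically: $x g x^{-1}\in J\subset F^\times\cO_A^\times$ means $x^{-1}\La_0$ is a $\cO_K$-lattice in $E$ (viewing $K=F[g]\hookrightarrow A$), these fall into finitely many $\Pic(\cO_K)$-classes, and the stabilizer of such a lattice in $G$ is $K^\times\cdot(\text{compact})$; combined with the two compactness-mod-centre hypotheses this bounds the number of cosets. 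One has to be a little careful that $J$ need not literally be $F^\times\cO_A^\times$ but only an open subgroup with $J/Z$ compact — however any such $J$ is contained in some conjugate of $F^\times\cO_A^\times$ (a maximal compact-mod-centre subgroup up to conjugacy and the centre), which is all that is needed. The local-constancy part is then a soft compactness/openness argument as sketched above, and should present no real difficulty once the finiteness is in hand.
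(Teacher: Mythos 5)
There is a genuine gap, in fact two. First, your finiteness argument for $n_J(g)$ rests on the claim that any open subgroup $J\subset GL_n(F)$ with $F^\times\subset J$ and $J/F^\times$ compact is contained in a conjugate of $F^\times\cdot\cO_A^\times$, and this is false: the normalizer of an Iwahori subgroup of $GL_2(F)$ is open, contains $F^\times$ and is compact modulo the center, yet it contains $\left(\begin{smallmatrix}0&1\\ \varpi&0\end{smallmatrix}\right)$, whose determinant has odd valuation, while every element of every conjugate of $F^\times\cdot GL_2(\cO_F)$ has determinant of even valuation. In general such a $J$ only stabilizes a facet of the building, i.e.\ a lattice \emph{chain} up to homothety, and redoing your argument with chains instead of a single lattice is genuinely more work (also, $\cO_F[g]$ is an order in $K$ but need not be $\cO_K$, and the finite list of $g$-stable lattice classes you produce depends on $g$, whereas Lemma~\ref{l:auxiliary-HC} needs a statement uniform over a whole neighborhood $\Om$ of $g$ in $K^\times$). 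So finiteness of $n_J(g)$ is not established for the general $J$ of Theorem~\ref{t:Frobenius-char-formula}.

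Second, in your separation argument the step ``we may replace the $x^{(k)}$ by representatives in $\cO_A^\times$ times $F^\times$'' is unjustified: the hypothesis $x^{(k)}y^{(k)}(x^{(k)})^{-1}\in J$ constrains the conjugate, not $x^{(k)}$ itself, and a coset $Jx^{(k)}$ need not meet $F^\times\cO_A^\times$ at all; indeed, if every such coset did, you would be done immediately (since $J\backslash J\,F^\times\cO_A^\times$ is finite), so this step presupposes exactly the hard content of the lemma, namely that $x\Om x^{-1}$ meeting a fixed compact-mod-center set forces $x$ into a compact subset of $G/K^\times$, uniformly in $y\in\Om$. The paper does not prove this from scratch: it encloses $J$ in $C'\cdot F^\times$ with $C'$ compact and quotes precisely this compactness statement (Lemma 19, p.~52 of \cite{harish-chandra-van-dijk-notes}, a form of Harish-Chandra's compactness lemma), after which compactness of $K^\times/F^\times$, discreteness of $G/J$, and two shrinkings of $\Om$ finish the proof. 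To make your route work you would either have to prove that lemma yourself (your lattice idea can be upgraded to do this when $J\subset F^\times\cO_A^\times$, with uniformity in $y$ near $g$ obtained by bounding the conductor of $\cO_F[y]$, plus a separate reduction from general $J$ via lattice chains), or simply cite it as the paper does.
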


If $\Om$ is as in the lemma, $n_J(y)=n_J(g)$ for all $y\in\Om$. The map $G\times K^\times\rar{}G$ given by $(\ga,y)\mapsto\ga y\ga^{-1}$ is open in a neighborhood of the point $(1,g)$, as one can easily check by calculating its differential (this idea is a simplified version of Harish-Chandra's submersion principle \cite{Harish-Chandra-submersion}). In particular, there exists an open neighborhood $U$ of $g$ in $G$ such that every point of $U$ is $G$-conjugate to a point of $\Om$. As the function $n_J$ is invariant under $G$-conjugation, we see that this function is constant on $U$, which completes the proof of Theorem \ref{t:Frobenius-char-formula}.

\subsection{Proof of Lemma \ref{l:auxiliary-HC}} We repeat the first paragraph of the proof of \cite[Thm.~A2]{Henniart-MathNachr1992}. As $J/F^\times$ is compact, there exists a compact subset $C'\subset G$ such that $J\subset C'\cdot F^\times$. By Lemma 19 (page 52) of \cite{harish-chandra-van-dijk-notes}, there exist an open neighborhood $\Om$ of $g$ in $K^\times$ and a compact subset $C^*\subset G/K^\times$ such that if $x\in G$ satisfies $x\Om x^{-1}\cap(C'\cdot F^\times)\neq\varnothing$, then the image of $x$ in $G/K^\times$ lies in $C^*$. Since $K^\times/F^\times$ is compact, there exists a compact subset $C\subset G$ such that the inverse image of $C^*$ in $G$ is contained in $C\cdot F^\times$. \emph{A fortiori}, if $x\in G$ satisfies $x\Om x^{-1}\cap J\neq\varnothing$, then $x\in C\cdot F^\times$. Since $G/J$ is discrete, the image of $C$ in $G/J$ is finite. Since $F^\times\subset J$, we deduce that there exist only finitely many right cosets $Jx$ of $J$ in $G$ such that $x\Om x^{-1}\cap J\neq\varnothing$. In particular, $n_J(g)<\infty$, and after shrinking $\Om$, we can ensure that $x\Om x^{-1}\cap J\neq\varnothing$ if and only if $xgx^{-1}\in J$. Finally, shrinking $\Om$ again, we can also ensure that if $xgx^{-1}\in J$, then $x\Om x^{-1}\subset J$, which completes the proof.

\section{Proof of the geometric theorem}\label{s:proof-t:main-geometric-theorem}

In this section we prove Theorem \ref{t:main-geometric-theorem}. Recall that $q$ denotes a power of a prime number $p$ and $n,r_0\geq 2$ are integers. In \S\ref{ss:geometric-ingredient} we defined an algebraic group $\cR^\times$ over $\fqn$ with unipotent radical $U\subset\cR^\times$ and a smooth hypersurface $X=L_{q^n}^{-1}(Y)\subset U$.

\subsection{Preliminary reductions} From now on, to simplify the notation, we adopt the following convention. If $S$ is any variety over $\fqn$ and $\cL$ is a local system on $S$, we will simply write $H^j_c(S,\cL)$ for the $j$-th compactly supported cohomology of $S\tens_{\fqn}\bfq$ with coefficients in the local system obtained from $\cL$ by pullback. Since the $\Gal(\bfq/\fqn)$-action is not considered here, we view $H^j_c(S,\cL)$ merely as a finite dimensional $\ql$-vector space. (It is \emph{not} the same as the compactly supported cohomology of $S$ with coefficients in $\cL$ in the usual sense.) This convention applies in particular to the constant rank $1$ local system $\cL=\ql$.

\mbr

Define $d=\ceil{\frac{n-1}{2}}$, so $d=(n-1)/2$ if $n$ is odd and $d=n/2$ if $n$ is even. Let $H\subset U$ be the subgroup given by $H(B)=\{1+a_{d+1}e_{d+1}+\dotsc+a_ne_n\st a_j\in B\}$ for any commutative $\fqn$-algebra $B$ (with the notation of \S\ref{ss:geometric-ingredient}). It is normal in $U$.

\mbr

The map $\pr_n:H\rar{}\bG_a$ given by $1+\sum_{j=d+1}^n a_j e_j\mapsto a_n$ is an algebraic group homomorphism. In particular, if $\psi:\fqn\rar{}\qls$ is an additive character, then $\psi\circ \pr_n$ is a character of $H(\fqn)$. Recall also that we can identify $\fqn$ with the center of $U(\fqn)$. The proofs of the next facts are rather straightforward, so we skip them.

\begin{lem}\label{l:exercise}
Let $\psi:\fqn\to\qls$ be a character with trivial $\Gal(\fqn/\bF_q)$-stabilizer.

\begin{enumerate}[$($a$)$]
\item Every character of $H(\fqn)$ whose restriction to $\fqn\cong\{1+a_ne_n\}\subset H(\fqn)$ agrees with $\psi$ is $U(\fqn)$-conjugate to $\psi\circ \pr_n$.
 \sbr
\item If $n$ is odd, the normalizer of $\psi\circ \pr_n$ in $U(\fqn)$ equals $H(\fqn)$.
 \sbr
\item If $n$ is even, the normalizer of $\psi\circ \pr_n$ in $U(\fqn)$ equals the subgroup $H^+(\fqn)$, where $H^+\subset U$ is given by $H^+(B)=\{1+a_de_d+\dotsc+a_ne_n\st a_j\in B\}$ for any commutative $\fqn$-algebra $B$.
\end{enumerate}
\end{lem}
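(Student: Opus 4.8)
The plan is to reduce everything to the \emph{normalizer computation} in parts (b) and (c), and then to obtain part (a) by a dimension count. The first observation is that $H(\fqn)$ is abelian: for $j,j'>d=\lceil (n-1)/2\rceil$ one has $j+j'\geq 2(d+1)>n$, so $e_je_{j'}=0$ by the multiplication rules of $\cR_0$ (for both $r_0=2$ and $r_0>2$); hence $(1+\eta)(1+\eta')=1+\eta+\eta'$ whenever $\eta,\eta'\in\bigoplus_{j>d}\fqn e_j$. In particular $\pr_n\colon H(\fqn)\to(\fqn,+)$ is a homomorphism, so $\psi\circ\pr_n$ is a genuine character, and the set $\cC_\psi$ of characters of $H(\fqn)$ that restrict to $\psi$ on the central subgroup $\{1+a_ne_n\}\cong\fqn$ is a coset of $\widehat{H(\fqn)/\fqn}$, of cardinality $q^{n(n-d-1)}$. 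Conjugation by $U(\fqn)$ fixes that central subgroup pointwise, so the $U(\fqn)$-orbit of $\psi\circ\pr_n$ is contained in $\cC_\psi$; hence part (a) will follow once the normalizer $N$ of $\psi\circ\pr_n$ in $U(\fqn)$ is shown to have index $q^{n(n-d-1)}$ in $U(\fqn)$. Since $|U(\fqn)|=q^{n^2}$, $|H(\fqn)|=q^{n(n-d)}$ and $|H^+(\fqn)|=q^{n(n-d+1)}$, and since $n-d=d+1$ when $n$ is odd while $n-d=d$ when $n$ is even, this index equals $q^{n(n-d-1)}$ precisely when $N=H(\fqn)$ (for $n$ odd) and $N=H^+(\fqn)$ (for $n$ even) --- which is exactly the content of (b) and (c).

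The inclusion $N\supseteq H(\fqn)$ (resp.\ $H^+(\fqn)$) is the easy half. For $n$ odd it is immediate, since an abelian group centralizes each of its characters. For $n$ even, if $\gamma\in\bigoplus_{i\geq d}\fqn e_i$ and $\eta\in\bigoplus_{j>d}\fqn e_j$, then every product $e_ie_j$ arising in $\gamma\eta$ or $\eta\gamma$ has $i+j\geq d+(d+1)>n$ and so vanishes; hence $1+\gamma$ commutes with $1+\eta$, i.e.\ $H^+(\fqn)$ centralizes $H(\fqn)$ entirely.

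For the reverse inclusion I would argue as follows. As $H(\fqn)$ is abelian, it lies in $N$ and conjugating $\psi\circ\pr_n$ by $uh_0$ with $h_0\in H(\fqn)$ has the same effect as conjugating by $u$; so it is enough to consider $u=1+\sum_{i=1}^{d}c_ie_i$ and to show that $u\in N$ forces $c_i=0$ for $1\leq i\leq n-1-d$ (which says $u=1$ when $n$ is odd and $u\in 1+\fqn e_d$ when $n$ is even, giving $N=H(\fqn)$ resp.\ $N=(1+\fqn e_d)H(\fqn)=H^+(\fqn)$). This I would do by induction on $i$: assuming $c_1=\dots=c_{i-1}=0$, set $i_0:=i$ and suppose, for contradiction, that $c_{i_0}\neq 0$. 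Test the identity $\psi\bigl(\pr_n(uhu^{-1})\bigr)=\psi\bigl(\pr_n(h)\bigr)$ on $h=1+a\,e_{n-i_0}$ for $a\in\fqn$ arbitrary; here $n-i_0\geq n-(n-1-d)=d+1>d$, so $h\in H(\fqn)$. Using $uhu^{-1}=1+u\eta u^{-1}$ (with $\eta=a\,e_{n-i_0}$) together with the relations $e_ae_b=0$ whenever $a+b>n$ --- which in particular kills $e_n e_k$ for every $k$, and forces $\gamma^2$ ($\gamma:=u-1$) to be supported in degrees $\geq 2i_0$, hence unable to reach the $e_n$-component once multiplied by $e_{n-i_0}$ --- a direct computation gives
\[
\pr_n\bigl(uhu^{-1}\bigr)-\pr_n(h)\;=\;c_{i_0}\,a^{q^{i_0}}\;-\;a\,c_{i_0}^{\,q^{\,n-i_0}},
\]
uniformly in $r_0$. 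Writing $\psi(x)=\psi_1\bigl(\Tr_{\fqn/\bF_p}(\beta x)\bigr)$ with $\beta\in\fqn^\times$ and using that $\Tr_{\fqn/\bF_p}$ is Frobenius-invariant, the character $a\mapsto\psi\bigl(c_{i_0}a^{q^{i_0}}-a\,c_{i_0}^{q^{n-i_0}}\bigr)$ of $(\fqn,+)$ is represented by $(\beta c_{i_0})^{q^{-i_0}}-\beta\, c_{i_0}^{\,q^{-i_0}}$, so $u\in N$ forces $c_{i_0}\bigl(\beta-\beta^{q^{i_0}}\bigr)=0$. But $\psi$ has trivial $\sG$-stabilizer, so $\bF_q(\beta)=\fqn$ and thus $\beta^{q^{i_0}}\neq\beta$ for $1\leq i_0\leq n-1$; hence $c_{i_0}=0$, the desired contradiction. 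This completes (b), (c), and hence (a).

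The step I expect to require genuine care is the displayed identity for $\pr_n(uhu^{-1})$. Once $c_1,\dots,c_{i_0-1}$ are known to vanish, one must check that among all products of the basis elements $e_i$ ($i\geq i_0$) coming from $u$ and $u^{-1}$, possibly together with a single $e_{n-i_0}$, the only ones with nonzero $e_n$-component are the two appearing on the right-hand side --- all others having total degree $<n$ or $>n$. This is the only genuinely combinatorial point, and it is where the precise value $d=\lceil(n-1)/2\rceil$ enters; the remaining ingredients (abelianness of $H(\fqn)$, the two cardinality counts, and the translation of ``$\psi\circ\pr_n$ is fixed by $u$'' into ``$\beta^{q^{i_0}}\neq\beta$'') are routine.
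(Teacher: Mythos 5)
Your proof is correct. Note that the paper itself offers no argument to compare against: the lemma is declared ``rather straightforward'' and its proof is skipped, so your write-up is simply a valid way of supplying the missing details. Your route --- first pinning down the stabilizer of $\psi\circ\pr_n$ in parts (b) and (c) by testing conjugation on the elements $1+a\,e_{n-i_0}$, then deducing part (a) from orbit--stabilizer together with the count of characters of the abelian group $H(\fqn)$ extending $\psi$ --- is sound, and the numerology matches exactly ($q^{nd}=q^{n(n-d-1)}$ for $n$ odd, $q^{n(d-1)}=q^{n(n-d-1)}$ for $n$ even). I verified the one step you flag as delicate: with $c_1=\dotsb=c_{i_0-1}=0$ the only products reaching the $e_n$-coordinate in $u(1+ae_{n-i_0})u^{-1}$ are $e_{i_0}e_{n-i_0}$ and $e_{n-i_0}e_{i_0}$, giving $\pr_n(uhu^{-1})=c_{i_0}a^{q^{i_0}}-a\,c_{i_0}^{q^{n-i_0}}$ uniformly in $r_0$ (for instance, for $n=3$, $r_0=2$ this is $c\,a^{q}-a\,c^{q^2}$, consistent with $(1+ce_1)^{-1}=1-ce_1+c^{1+q}e_2-c^{1+q+q^2}e_3$), and the resulting additive character of $\fqn$ is represented by $c_{i_0}^{q^{n-i_0}}\bigl(\beta^{q^{n-i_0}}-\beta\bigr)$, which vanishes only if $c_{i_0}=0$ because the trivial $\Gal(\fqn/\bF_q)$-stabilizer of $\psi$ forces $\bF_q(\beta)=\fqn$. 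The easy inclusions ($H(\fqn)$ abelian; $H^+(\fqn)$ centralizes $H(\fqn)$ when $n=2d$) are also handled correctly.
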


\begin{lem}\label{l:exercise2}
Let $\psi$ be as in Lemma \ref{l:exercise}.

\begin{enumerate}[$($a$)$]
\item If $n$ is odd, $\pi:=\Ind_{H(\fqn)}^{U(\fqn)}(\psi\circ \pr_n)$ is an irreducible representation of $U(\fqn)$.
 \sbr
\item If $n$ is even, $H^+(\fqn)$ has a unique irreducible representation $\sg$ whose restriction to $H(\fqn)$ is a direct sum of copies of $\psi\circ \pr_n$. Moreover, $\sg$ is $q^{n/2}$-dimensional, $\pi:=\Ind_{H^+(\fqn)}^{U(\fqn)}(\sg)$ is irreducible and $\Ind_{H(\fqn)}^{U(\fqn)}(\psi\circ \pr_n)$ is a direct sum of $q^{n/2}$ copies of $\pi$.
 \sbr
\item In both cases, $\pi$ is the unique irreducible representation of $U(\fqn)$ with central character $\psi$.
\end{enumerate}
\end{lem}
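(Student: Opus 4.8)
The plan is to reduce the lemma to Lemma~\ref{l:exercise} and two standard facts from the representation theory of finite groups: Mackey's formula, and the representation theory of finite Heisenberg-type groups (the finite analogue of the Stone--von Neumann theorem). Write $\chi:=\psi\circ\pr_n$, a character of $H(\fqn)$; the centre $Z:=\{1+a_ne_n\}$ of $U(\fqn)$ is contained in $H(\fqn)$, and $\chi$ restricts to $\psi$ there under the identification $Z\cong\fqn$. Since $H\triangleleft U$ by Lemma~\ref{l:exercise}, one has $H(\fqn)\cap gH(\fqn)g^{-1}=H(\fqn)$ for every $g\in U(\fqn)$, so Mackey's formula gives
\[
\dim_{\ql}\End_{U(\fqn)}\!\bigl(\Ind_{H(\fqn)}^{U(\fqn)}\chi\bigr)=\#\{gH(\fqn)\in U(\fqn)/H(\fqn):\chi^{g}=\chi\}=\bigl[\operatorname{Stab}_{U(\fqn)}(\chi):H(\fqn)\bigr].
\]
This already settles part~(a): for $n$ odd, Lemma~\ref{l:exercise}(b) gives $\operatorname{Stab}_{U(\fqn)}(\chi)=H(\fqn)$, so the right-hand side equals $1$ and $\pi=\Ind_{H(\fqn)}^{U(\fqn)}\chi$ is irreducible. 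For part~(c) in this case one computes $\dim\pi=[U(\fqn):H(\fqn)]=q^{n(n-1)/2}$, notes that $Z$ acts on $\pi$ through $\psi$, and invokes the identity $\sum_{V}(\dim V)^{2}=[U(\fqn):Z]$ (the sum over the irreducible representations $V$ of $U(\fqn)$ on which $Z$ acts through $\psi$): since $(\dim\pi)^{2}=q^{n(n-1)}=[U(\fqn):Z]$, $\pi$ is the unique such $V$.

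The content of the lemma is part~(b), $n$ even; put $d=n/2$ and $P:=H^{+}(\fqn)$. The first step is to understand $P$. Since $2d=n$, the only nonzero product $e_{i}e_{j}$ with $i,j\geq d$ is $e_{d}^{2}=e_{n}$, and a short computation with the multiplication rules of \S\ref{ss:geometric-ingredient} then shows that $Z(P)=H(\fqn)$, that $[P,P]\subseteq Z(P)$, and that two elements of $P$ with images $a,b\in\fqn\cong P/Z(P)$ (the coordinate $a_{d}$) have commutator $1+(ab^{q^{d}}-a^{q^{d}}b)\,e_{n}$. Hence $P$ is two-step nilpotent, and $\chi$ induces an alternating $\bF_{p}$-bilinear form $\omega_{\chi}(a,b)=\psi\bigl(ab^{q^{d}}-a^{q^{d}}b\bigr)$ on $P/Z(P)\cong\fqn$.

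The crux of the argument --- and the only place where the hypothesis on $\psi$ is used --- is that $\omega_{\chi}$ is non-degenerate. For $a\neq0$ the map $b\mapsto ab^{q^{d}}$ is a bijection of $\fqn$, and $ab^{q^{d}}-a^{q^{d}}b=(1-\vp^{d})(ab^{q^{d}})$, so the values $\omega_{\chi}(a,\cdot)$ run exactly over $\psi\bigl((1-\vp^{d})\fqn\bigr)$; this is not the trivial group, for otherwise $\psi$ would be fixed by $\vp^{d}\in\Gal(\fqn/\bF_{q})$, which is a nontrivial element because $0<d<n$, contradicting the assumption that $\psi$ has trivial $\Gal(\fqn/\bF_{q})$-stabiliser. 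Granting non-degeneracy, the finite Stone--von Neumann theorem applies to $P$: there is a unique irreducible representation $\sg$ of $P$ on which $Z(P)=H(\fqn)$ acts through $\chi$, and $\dim\sg=[P:Z(P)]^{1/2}=q^{n/2}$. This is the representation of part~(b). Moreover every irreducible constituent of $\Ind_{H(\fqn)}^{P}\chi$ has $Z(P)$ acting through $\chi$, hence is isomorphic to $\sg$, so a dimension count (the left side has dimension $[P:H(\fqn)]=q^{n}$) gives $\Ind_{H(\fqn)}^{P}\chi\cong q^{n/2}\cdot\sg$.

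It remains to treat $\pi=\Ind_{P}^{U(\fqn)}\sg$. By transitivity of induction, $\Ind_{H(\fqn)}^{U(\fqn)}\chi\cong\Ind_{P}^{U(\fqn)}\!\bigl(\Ind_{H(\fqn)}^{P}\chi\bigr)\cong q^{n/2}\cdot\pi$. On the other hand, Lemma~\ref{l:exercise}(c) gives $\operatorname{Stab}_{U(\fqn)}(\chi)=H^{+}(\fqn)$, so the displayed Mackey formula yields $\dim\End_{U(\fqn)}\bigl(\Ind_{H(\fqn)}^{U(\fqn)}\chi\bigr)=[H^{+}(\fqn):H(\fqn)]=q^{n}$. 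Writing $\pi=\bigoplus_{i}m_{i}V_{i}$ with the $V_{i}$ distinct irreducibles, this forces $q^{n}=q^{n}\sum_{i}m_{i}^{2}$, so $\pi$ is irreducible; and the same display shows $\Ind_{H(\fqn)}^{U(\fqn)}\chi\cong q^{n/2}\cdot\pi$, the remaining assertions of part~(b). Part~(c) for $n$ even is then identical to the odd case: again $\dim\pi=[U(\fqn):H^{+}(\fqn)]\cdot q^{n/2}=q^{n(n-1)/2}$, $Z$ acts through $\psi$, and $(\dim\pi)^{2}=[U(\fqn):Z]$. I expect the structural analysis of $P=H^{+}(\fqn)$, and in particular the non-degeneracy of $\omega_{\chi}$ (the one point where the genericity of $\psi$ is needed), to be the main obstacle; the rest is routine bookkeeping with induced representations.
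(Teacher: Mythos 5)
Your proposal is correct, and since the paper explicitly skips the proof of this lemma (``the proofs of the next facts are rather straightforward, so we skip them''), your argument simply supplies the intended standard one: Mackey's criterion combined with Lemma \ref{l:exercise} for part (a) and for the endomorphism count, the finite Stone--von Neumann theorem for $H^+(\fqn)$ in part (b) --- where your verification that the pairing $\omega_\chi(a,b)=\psi(ab^{q^{d}}-a^{q^{d}}b)=\psi\bigl((1-\vp^{d})(ab^{q^{d}})\bigr)$ is nondegenerate, using that $\vp^{d}\neq 1$ does not stabilize $\psi$, is exactly the one place the genericity hypothesis enters --- and the identity $\sum_{V:\,\omega_V=\psi}(\dim V)^2=[U(\fqn):Z]$ (valid because $\ql[U(\fqn)]/(z-\psi(z):z\in Z)$ is semisimple of dimension $[U(\fqn):Z]$ with simple modules exactly the irreducibles of central character $\psi$) for part (c). All the numerical checks ($\dim\pi=q^{n(n-1)/2}$ in both parities, $\dim\sg=q^{n/2}$, multiplicity $q^{n/2}$) are correct, so there is nothing to add.
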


The last lemma implies that parts (a) and (b) of Theorem \ref{t:main-geometric-theorem} follow from

\begin{prop}
Let $\psi$ be as in Lemma \ref{l:exercise}.

\begin{enumerate}[$($a$)$]
\item We have
\[
\dim\Hom_{U(\fqn)}\bigl( \Ind_{H(\fqn)}^{U(\fqn)}(\psi\circ\pr_n), H^{n-1}_c(X,\ql) \bigr) = \begin{cases} 1 & \text{if } n \text{ is odd}, \\ q^{n/2} & \text{if } n \text{ is even}.
\end{cases}
\]
 \sbr
\item If $j\neq n-1$, then $\Hom_{U(\fqn)}\bigl( \Ind_{H(\fqn)}^{U(\fqn)}(\psi\circ\pr_n), H^{j}_c(X,\ql) \bigr)=0$.
\end{enumerate}
\end{prop}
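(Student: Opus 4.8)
The plan is to compute the two $\Hom$-spaces by a Frobenius-reciprocity / $L$-isogeny argument. Since $X = L_{q^n}^{-1}(Y) \subset U$, with $L_{q^n}(g) = \Fr_{q^n}(g) g^{-1}$, the variety $X$ is a $U(\fqn)$-torsor over $Y$ via right translation; pushing forward the constant sheaf along $X \to Y$ decomposes $R\Gamma_c(X,\ql)$ according to characters of $U(\fqn)$. Concretely, by Frobenius reciprocity for the finite group $U(\fqn)$, for any $U(\fqn)$-representation $V$ we have $\Hom_{U(\fqn)}\bigl(\Ind_{H(\fqn)}^{U(\fqn)}(\psi\circ\pr_n), V\bigr) \cong \Hom_{H(\fqn)}\bigl(\psi\circ\pr_n, V|_{H(\fqn)}\bigr)$, i.e. the $(\psi\circ\pr_n)$-isotypic part of $V$ for the subgroup $H(\fqn)$. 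So I want to identify the $(\psi\circ\pr_n)$-isotypic component of $H^j_c(X,\ql)$ under the $H(\fqn)$-action.

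\textbf{Key steps.} First I would form the quotient $X' := X / \ker(\psi\circ\pr_n)$ — more precisely, since $H^+ \subset U$ (resp. $H \subset U$) is a subgroup, I consider the intermediate torsor $X \to X/(H(\fqn)\cap \ker(\psi\circ\pr_n))$ and use the $H(\fqn)$-equivariant structure to pick out the $\psi$-part, arriving at a ``twisted'' cohomology $H^j_c(Z, \cL_\psi)$ where $Z = X/(\text{suitable subgroup})$ is a variety whose dimension I would compute to be $n-1$, and $\cL_\psi$ is an Artin–Schreier-type local system of rank $1$ coming from $\psi$ composed with the relevant Lang torsor. Second, I would show $Z$ is (up to the torsor structure) an affine space or a product of affine lines with a single Artin–Schreier-twisted factor — this is where the explicit multiplication rules of $\cR_0$ (the two cases $r_0 = 2$ and $r_0 > 2$) enter — so that the twisted cohomology vanishes outside the middle degree by the standard fact that $H^j_c(\bA^1, \cL_\psi)$ is concentrated in degree $1$ (and is $1$-dimensional there), together with Künneth. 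This gives part (b) immediately and shows the degree-$(n-1)$ space has the asserted dimension ($1$ when $n$ is odd; when $n$ is even the normalizer is the larger group $H^+(\fqn)$, and the extra factor $q^{n/2}$ in the dimension comes from the $H^+(\fqn)/H(\fqn)$-part of the torsor contributing a full affine-space cohomology rather than a twisted one — this matches Lemma~\ref{l:exercise2}(b)). I would track the dimension count by writing $\dim X = n-1$ (it is a hypersurface $a_n = (\text{Lang expression})$ inside the $(n-1)$-dimensional... wait, $U$ is $n$-dimensional, $Y$ is the hyperplane $a_n=0$ so $\dim Y = n-1$, and $L_{q^n}$ is finite étale, so $\dim X = n-1$), and then peeling off the torsor directions carefully.

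\textbf{Main obstacle.} The technical heart is the explicit identification of the quotient variety $Z$ (after dividing $X$ by the appropriate subgroup of $U(\fqn)$ and extracting the $\psi$-isotypic line) with an affine space carrying exactly one Artin–Schreier twist. This requires using the commutation relations $e_i \cdot a = a^{q^i} e_i$ and the (case-dependent) products $e_i e_j$ to solve the Lang equation $\Fr_{q^n}(g)g^{-1} \in Y$ layer by layer in the coordinates $a_1,\dots,a_n$, showing that once $a_1,\dots,a_{d}$ (or $a_1,\dots,a_{d-1}$ in the even case, with $a_d$ playing a special role) are chosen freely, the remaining coordinates are determined by Artin–Schreier-type equations, all but one of which contribute trivial (i.e. affine-space) cohomology after the isotypic projection, and the last of which contributes the $\psi$-twist responsible for concentration in the middle degree. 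Handling the even case — where the ``middle'' coordinate $a_d$ pairs with itself under the bilinear form implicit in the multiplication and forces the Heisenberg-type phenomenon of Lemma~\ref{l:exercise2}(b) — is the subtlest point, and I would expect to spend most of the argument there, likely by first treating $n$ odd cleanly and then explaining the modification. A secondary obstacle is bookkeeping the degree shift: each Artin–Schreier line shifts cohomological degree by $1$ and there are $d$ (resp. $d$) free directions plus the twists, and one must check the total lands in degree $n-1$ and not, say, $2n-2$ (the latter being the generic-$\psi$ phenomenon alluded to in Remark~(2) after Theorem~\ref{t:main-geometric-theorem}); the trivial-$\sG$-stabilizer hypothesis on $\psi$ is exactly what rules out the extra degrees.
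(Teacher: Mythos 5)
Your first step is fine and is in fact the same reduction the paper makes: Frobenius reciprocity identifies the $\Hom$-space with the $(\psi\circ\pr_n)$-isotypic part of $H^j_c(X,\ql)$ under $H(\fqn)$, and the Lang-torsor manipulation you sketch is exactly the appeal to \cite[Prop.~2.3]{DLtheory}, realized in the paper by the map $f:(U/H)\times H\rar{}U$, $(x,h)\mapsto\Fr_{q^n}(s(x))\,h\,s(x)^{-1}$, with $f^{-1}(Y)\cong\bA^{n-1}$ carrying the Artin--Schreier pullback $\al^*(\cL_\psi)$. (One caveat: to produce an Artin--Schreier local system you must work with the connected group $H$ and its Lang torsor, not merely quotient $X$ by the finite group $H(\fqn)\cap\Ker(\psi\circ\pr_n)$; your sketch gestures at this but does not carry it out.)

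The genuine gap is in your second step. You propose that the resulting $(n-1)$-dimensional variety is, up to Künneth, a product of affine lines with a single Artin--Schreier-twisted factor. That structure cannot yield the statement: $H^*_c\bigl(\bA^{n-2}\bigr)\otimes H^*_c(\bA^1,\cL_\psi)$ is concentrated in degree $2(n-2)+1=2n-3$, not $n-1$ (these agree only for $n=2$), and it has dimension $1$ regardless of the parity of $n$, so it also cannot produce $q^{n/2}$. The actual mechanism is not a Künneth factorization but an iterated cancellation: the polynomial $\al$ couples $a_j$ and $a_{n-j}$ through the twisted-bilinear term $a_{n-j}a_j^{q^{n-j}}-a_j^{q^n}a_{n-j}^{q^j}$, and the key lemma (\cite[Prop.~2.10]{DLtheory}, used in Lemma \ref{l:induction-step}) shows that integrating out $a_{n-j}$ kills all cohomology except over the locus $a_j=0$, with a degree shift of only $2$; iterating $d-1$ times is what lands the answer in degree $n-1$ rather than near $2n-2$. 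Your account of the even case is also off target: the factor $q^{n/2}$ does not come from the $H^+(\fqn)/H(\fqn)$-part of the torsor ``contributing full affine-space cohomology'' (that would give a one-dimensional contribution in even degree); it comes from the base case (Lemma \ref{l:induction-base}(b)), where the middle coordinate satisfies $\al_d(x)=x^{q^n+q^d}-x^{q^d+1}$ and one must prove $\dim H^1_c(\bA^1,\al_d^*\cL_\psi)=q^{n/2}$ --- in the paper via the Swan conductor at infinity of $\cL_{\psi'}$ pulled back along $x\mapsto x^{1+q^{n/2}}$ and the Grothendieck--Ogg--Shafarevich formula, where $\psi'(z)=\psi(z^{q^{n/2}}-z)$ is nontrivial precisely because $\psi$ has trivial stabilizer. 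Without the two-variable cancellation step and this ramification computation, neither the concentration in degree $n-1$ nor the dimensions $1$ and $q^{n/2}$ are established.
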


To prove the last proposition we use the methods developed in \cite[\S2]{DLtheory}. We identify the homogeneous space $U/H$ with the affine space of dimension $d$ in the evident way, and we write $s:U/H\rar{}U$ for the natural section given by the formula $s(a_1,\dotsc,a_d)=1+\sum_{j=1}^d a_j e_j$. Consider the morphism
\begin{equation}\label{e:the-map-f}
f : (U/H) \times H \rar{} U, \qquad (x,h)\mapsto \Fr_{q^n}(s(x))\cdot h\cdot s(x)^{-1}.
\end{equation}
Let us also write $\widetilde{p}_n=\pr_n\circ\pr_2:(U/H)\times H\rar{}\bG_a$ for the projection onto the last coordinate (where $\pr_2$ is the second projection and $\pr_n$ was defined earlier). Then by \cite[Prop.~2.3]{DLtheory}, for any character $\psi$ of $\fqn$, we have a vector space isomorphism
\[
\Hom_{U(\fqn)}\bigl( \Ind_{H(\fqn)}^{U(\fqn)}(\psi\circ\pr_n), H^{j}_c(X,\ql) \bigr) \cong H^j_c(f^{-1}(Y),\widetilde{p}_n^*(\cL_\psi))
\]
where $Y\subset U$ is the hyperplane defined by $a_n=0$ and $\cL_\psi$ is the Artin-Schreier local system on $\bG_a$ corresponding to the character $\psi$.

\mbr

We finally see that the proof of parts (a) and (b) of Theorem \ref{t:main-geometric-theorem} is reduced to

\begin{prop}\label{p:cohomology-calculation}
If $\psi$ is as in Lemma \ref{l:exercise}, then
\[
\dim H^j_c(f^{-1}(Y),\widetilde{p}_n^*(\cL_\psi)) = \begin{cases} 0 & \text{if } j\neq n-1, \\ 1 & \text{if } j=n-1 \text{ and } n \text{ is odd}, \\ q^{n/2} & \text{if } j=n-1 \text{ and } n \text{ is even}.
\end{cases}
\]
\end{prop}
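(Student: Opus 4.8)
The plan is to make the variety $f^{-1}(Y)$ completely explicit in coordinates, and then to compute its twisted cohomology by peeling off one coordinate at a time, using at each stage the basic fact that integrating the Artin--Schreier sheaf $\cL_\psi$ along an affine line kills the cohomology whenever the map to $\bG_a$ restricts to a non-constant (affine-linear or, more generally, non-degenerate) function on the fiber, and otherwise contributes a shift by $2$ in degree and a Tate twist. Concretely, write a point of $(U/H)\times H$ as $(a_1,\dots,a_d;b_{d+1},\dots,b_n)$. Using the multiplication rules of $\cR_0$ from \S\ref{ss:geometric-ingredient}, one expands $f(a,b)=\Fr_{q^n}(s(a))\cdot(1+\sum_{j>d}b_je_j)\cdot s(a)^{-1}$; since $s(a)^{-1}=1-\sum_{j\le d}a_je_j+(\text{higher terms})$ and Frobenius raises each $a_i$ to the $q^n$ power, the $e_\ell$-coordinate of $f(a,b)$ is a polynomial $F_\ell(a,b)$ in the $a_i, a_i^{q^n}, b_j$. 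The equation defining $f^{-1}(Y)$ is $F_n(a,b)=0$, and the function $\widetilde p_n$ we twist by is just $b_n$.

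The key structural observation is that because of the truncation rule $e_ie_j=0$ for $i+j>n$ (when $r_0>2$; when $r_0=2$ one keeps $e_{i+j}$ up to $e_n$, but the top coordinate $e_n$ is unaffected), the coordinate $F_n(a,b)$ depends on $b_n$ linearly with coefficient $1$, i.e. $F_n(a,b)=b_n+(\text{terms not involving }b_n)$. Hence $f^{-1}(Y)$ is the graph $b_n=-(\text{rest})$, so $f^{-1}(Y)$ is isomorphic to affine space $\bA^{d}\times\bA^{n-1-d}$ in the coordinates $a_1,\dots,a_d,b_{d+1},\dots,b_{n-1}$, and under this isomorphism the restriction of $\widetilde p_n$ becomes the explicit polynomial $\phi(a,b)=-(F_n \text{ with } b_n \text{ dropped})$. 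So the whole problem is reduced to computing $H^\bullet_c(\bA^{n-1},\phi^*\cL_\psi)$ for this specific $\phi$. I expect $\phi$ to be, up to the contributions of the $b_j$'s which enter linearly and can be integrated out, essentially the ``norm-type'' form $\sum_{i=1}^{d} a_i a_{\sigma(i)}^{q^n}$ (for $n$ even, with a leftover quadratic term in the single middle variable $a_{n/2}$ accounting for the $q^{n/2}$), so that the twisted cohomology is that of a non-degenerate quadratic form over $\bfq$, which is one-dimensional in the middle degree with the expected parity. The bookkeeping of which monomials in $\phi$ are linear in some $b_j$ (allowing an affine fibration argument reducing the dimension by $2$ each time with a degree shift) versus which survive into the genuinely quadratic core is exactly the combinatorics governed by the pairing $i+j=n$ in the ring $\cR_0$, and the dimension count $d=\ceil{(n-1)/2}$ is what makes the core a rank-$(n-1)$ or rank-$(n-2)$ quadratic form.

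So the steps, in order, are: (1) expand $f$ in coordinates and verify $F_n$ is affine-linear in $b_n$ with unit leading coefficient, identifying $f^{-1}(Y)\cong\bA^{n-1}$; (2) write the restricted $\widetilde p_n$ as an explicit polynomial $\phi$ in the remaining coordinates; (3) successively apply the Artin--Schreier fibration lemma (projection formula plus $H^\bullet_c(\bA^1,\cL_{\psi(\text{nonconstant affine})})=0$) to integrate out the variables $b_{d+1},\dots,b_{n-1}$ and the ``free'' $a_i$'s, reducing to a non-degenerate quadratic form $Q$ in either $n-1$ variables ($n$ odd) or $n-2$ variables ($n$ even), with in the even case one extra additive-character integral over the square of the middle variable $a_{n/2}$ contributing the factor $q^{n/2}$ in degree $n-1$ rather than a shift; (4) invoke the standard computation of $H^\bullet_c$ of a non-degenerate quadratic form twisted by $\cL_\psi$ (a Gauss-sum computation: one-dimensional, concentrated in the middle degree) to conclude. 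The main obstacle I anticipate is step (3): one must be careful that the variables are eliminated in an order for which the relevant coefficient really is a non-vanishing \emph{constant} (not merely generically nonzero), so that the fibration is genuinely a vector-bundle-with-$\cL_\psi$ situation and the cohomology vanishes on the nose; this is where the precise shape of the commutation relations in $\cR_0$, and the hypothesis that $\psi$ has trivial $\Gal(\fqn/\bF_q)$-stabilizer (which guarantees the quadratic core is non-degenerate rather than having a null direction along which $\psi$ becomes trivial), must be used. Tracking the total degree shift through all the affine fibrations, together with the middle degree $n-1$ of the final quadratic form, is what produces the asserted answer.
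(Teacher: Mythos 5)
Your steps (1)--(2) coincide with the paper's reduction (Lemma \ref{l:reduction-to-affine-space}): $f^{-1}(Y)\cong\bA^{n-1}$ because the $e_n$-coordinate of $f$ is $a_n$ plus terms not involving $a_n$, and the sheaf becomes $\al^*\cL_\psi$ for an explicit polynomial $\al$. But the mechanism you propose for step (3) cannot be made to work as described. In $\al$, the only monomials containing $a_{n-j}$ are $a_j^{q^{n-j}}a_{n-j}$ and $a_j^{q^n}a_{n-j}^{q^j}$, whose coefficients are powers of $a_j$ --- there is no ordering of the variables in which the eliminated variable appears with an everywhere-nonvanishing constant coefficient. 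The correct argument is a two-strata one, not a vector-bundle one: over the open locus $a_j\neq 0$ one substitutes $u=a_j^{q^{n-j}}a_{n-j}$ so that the two monomials combine to $u-u^{q^j}$, the induced character is $\psi/(\psi\circ\vp^j)$, which is nontrivial by the trivial-stabilizer hypothesis, and the fiberwise cohomology vanishes; over the closed stratum $a_j=0$ the line in $a_{n-j}$ carries the constant sheaf and contributes a shift by $2$. This is exactly \cite[Prop.~2.10]{DLtheory}, applied $d-1$ times in the paper (Lemma \ref{l:induction-step}). If all eliminations were clean fibrations with nonvanishing coefficients, as you hope in your "main obstacle" remark, the total cohomology would be zero; the degree shifts that place the answer in degree $n-1$ come precisely from the closed strata that your setup would discard.

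The more serious gap is the base case, especially for $n$ even. After the pairwise eliminations the residual one-variable function is not "the square of the middle variable": it is $\al_d(x)=x^{q^n+q^d}-x^{q^d+1}$ with $d=n/2$, i.e.\ the pullback of $\cL_{\psi'}$, $\psi'(z)=\psi(z^{q^d}-z)$ (nontrivial again by the stabilizer hypothesis), along the norm-type map $x\mapsto x^{1+q^d}$. A Gauss-sum computation for an honest quadratic form would give a one-dimensional answer, so the factor $q^{n/2}$ you insert "rather than a shift" has no mechanism behind it; it must come from an actual computation of $H^1_c(\bA^1,\ga'^*\cL_{\psi'})$, which the paper does via the Swan conductor $q^{d}+1$ at infinity and the Grothendieck--Ogg--Shafarevich formula (Lemma \ref{l:induction-base}(b)). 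This Hermitian-curve computation is the essential non-formal ingredient of the whole proposition (it is where the $q^{n/2}$-dimensional Heisenberg/Weil-type representation enters), and it is exactly the point your outline leaves unproved. Note also that $\al$ is built from $q$-power-twisted monomials (and $p=2$ is allowed), so it is not an ordinary quadratic form over $\bfq$ and the "standard nondegenerate quadratic form" computation does not literally apply; in the odd case the paper instead finishes with one more application of the same stratification lemma, reducing to a point.
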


The proof of the last proposition is contained in \S\S\ref{ss:inductive-setup}--\ref{ss:induction-base}.

\subsection{Inductive setup}\label{ss:inductive-setup} We have $f^{-1}(Y)=(\pr_n\circ f)^{-1}(0)$, where $\pr_n:U\rar{}\bG_a$ is the projection onto the last coordinate. Let us identify $(U/H)\times H$ with $\bA^n$ in the natural way. Under this identification, the map \eqref{e:the-map-f} becomes
\begin{eqnarray*}
f(a_1,\dotsc,a_n) &=& (1+a_1^{q^n}e_1+\dotsc+a_d^{q^n}e_d)\cdot(1+a_{d+1}e_{d+1}+\dotsc+a_ne_n) \\
&& \cdot (1+a_1e_1+\dotsc+a_de_d)^{-1}
\end{eqnarray*}
Therefore there exists a polynomial map $\al:\bA^{n-1}\rar{}\bG_a$ such that
\[
\pr_n(f(a_1,\dotsc,a_n)) = a_n-\al(a_1,\dotsc,a_{n-1}).
\]
This observation implies the following

\begin{lem}\label{l:reduction-to-affine-space}
\begin{enumerate}[$($a$)$]
\item The projection map $(U/H)\times H\rar{}\bA^{n-1}$ onto the first $(n-1)$ coordinates identifies $f^{-1}(Y)$ with $\bA^{n-1}$.
 \sbr
\item Under this identification, the local system $\widetilde{p}_n^*(\cL_\psi)\bigl\lvert_{f^{-1}(Y)}$ corresponds to $\al^*(\cL_\psi)$.
\end{enumerate}
\end{lem}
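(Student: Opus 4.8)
The plan is to read off the lemma from the identity $\pr_n(f(a_1,\dots,a_n)) = a_n-\al(a_1,\dots,a_{n-1})$ recorded just above the statement. For part (a), the key point is that, since $f^{-1}(Y)=(\pr_n\circ f)^{-1}(0)$, the closed subscheme $f^{-1}(Y)\subset(U/H)\times H\cong\bA^n$ is precisely the vanishing locus of the function $a_n-\al(a_1,\dots,a_{n-1})$, i.e.\ the graph of the morphism $\al\colon\bA^{n-1}\rar{}\bG_a$. The graph of a morphism between affine spaces is carried isomorphically onto its source by the projection forgetting the coordinate that has been solved for; concretely, the projection $(U/H)\times H\rar{}\bA^{n-1}$ onto $(a_1,\dots,a_{n-1})$ restricts to an isomorphism $f^{-1}(Y)\iso\bA^{n-1}$ with inverse $(a_1,\dots,a_{n-1})\mapsto(a_1,\dots,a_{n-1},\al(a_1,\dots,a_{n-1}))$.

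For part (b) I would transport the local system along this isomorphism. By definition $\widetilde{p}_n=\pr_n\circ\pr_2$ is the last-coordinate function $a_n$ on $(U/H)\times H$; composing it with the inverse isomorphism from part (a) replaces $a_n$ by $\al(a_1,\dots,a_{n-1})$, so that $\widetilde{p}_n\bigl\lvert_{f^{-1}(Y)}$ becomes the morphism $\al$ itself under the identification $f^{-1}(Y)\cong\bA^{n-1}$. Pulling back $\cL_\psi$ along this then yields the asserted identification $\widetilde{p}_n^*(\cL_\psi)\bigl\lvert_{f^{-1}(Y)}\cong\al^*(\cL_\psi)$.

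Both parts are formal once the displayed identity is available, so I do not expect a genuine obstacle; the only point requiring care is that identity itself, should one wish to reprove rather than cite it. The relevant structural fact is that $e_n$ is central in $\cR_0$ and satisfies $e_i\cdot e_n=e_n\cdot e_i=0$ for all $1\leq i\leq n$ (since $i+n>n$ in either multiplication table). Hence in the product defining $f$, the term $a_ne_n$ of the middle factor contributes exactly $a_n$ to the $e_n$-coefficient, while every other contribution to that coefficient uses only $e_1,\dots,e_{n-1}$ from the three factors and is therefore a polynomial in $a_1,\dots,a_{n-1}$ (the entries of $(1+a_1e_1+\dots+a_de_d)^{-1}$ are polynomial in $a_1,\dots,a_d$ because that element is unipotent). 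Taking $\al$ to be minus the sum of these remaining terms produces the identity with leading term $a_n$, which is exactly what makes the graph description in part (a) go through.
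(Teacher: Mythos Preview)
Your proposal is correct and matches the paper's approach exactly: the paper simply states that the displayed identity $\pr_n(f(a_1,\dotsc,a_n))=a_n-\al(a_1,\dotsc,a_{n-1})$ ``implies the following'' lemma, and you have spelled out precisely how---$f^{-1}(Y)$ is the graph of $\al$, the projection is an isomorphism onto $\bA^{n-1}$, and $\widetilde{p}_n$ restricts to $\al$ under this identification. Your final paragraph reproving the identity is a welcome bonus that the paper omits.
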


Therefore we must compute $H^i_c(\bA^{n-1},\al^*(\cL_\psi))$ for all $i$. To this end, for every $1\leq j\leq d$, we consider the morphism $f_j:\bA^{n-2(j-1)}\rar{}U$ given by
\[
(a_j,a_{j+1},\dotsc,a_{n-j},a_n)\mapsto f(\underbrace{0,\dotsc,0}_{j-1},a_j,a_{j+1},\dotsc,a_{n-j},\underbrace{0,\dotsc,0}_{j-1},a_n)
\]
We have
\begin{eqnarray*}
f_j(a_j,a_{j+1},\dotsc,a_{n-j},a_n) &=& (1+a_j^{q^n}e_j+\dotsc+a_d^{q^n}e_d) \\ && \cdot(1+a_{d+1}e_{d+1}+\dotsc+a_{n-j}e_{n-j}+a_ne_n) \\
&& \cdot (1+a_je_j+\dotsc+a_de_d)^{-1}
\end{eqnarray*}
Therefore there exists a polynomial map $\al_j:\bA^{n-2j+1}\rar{}\bG_a$ such that
\[
\pr_n(f_j(a_j,a_{j+1},\dotsc,a_{n-j},a_n)) = a_n-\al_j(a_j,a_{j+1},\dotsc,a_{n-j}).
\]
By construction, $\al_1=\al$ and $f_1=f$.

\begin{lem}\label{l:induction-step}
$H^i_c(\bA^{n-2j+1},\al_j^*\cL_\psi)\cong H^{i-2}_c(\bA^{n-2j-1},\al_{j+1}^*\cL)$ for $1\leq j<d$ and all $i$.
\end{lem}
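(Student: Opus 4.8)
The plan is to compute $\al_j$ more or less explicitly (enough to exhibit the relevant structure) and to perform the cohomology shift by a change-of-variables argument, pushing forward in stages along the variables $a_j$ and $a_{n-j}$.

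First I would write out the product defining $f_j$. Because we are multiplying $\cR_0$-elements and then reading off the $e_n$-coefficient, the formula for $\al_j$ will be a sum of products of the $a_i$'s. The key is to isolate the terms involving the ``extreme'' variables $a_j$ and $a_{n-j}$. Using the multiplication rules of $\cR_0$ from \S\ref{ss:geometric-ingredient} — in either case $r_0=2$ or $r_0>2$, the only products of two $e_i$'s that land in $e_n$ are those with $i+i'=n$ — one sees that in $\al_j(a_j,\dotsc,a_{n-j})$ the variable $a_j$ appears only through the ``cross terms'' $a_j^{q^n}\cdot a_{n-j}$ (coming from $(1+a_j^{q^n}e_j+\dotsc)(1+\dotsc+a_{n-j}e_{n-j}+\dotsc)$, using $e_j e_{n-j}=e_n$) and $a_j \cdot (\text{something})$ coming from the inverse factor $(1+a_je_j+\dotsc)^{-1}$; a short computation shows that, after collecting, $\al_j$ has the shape
\[
\al_j(a_j,a_{j+1},\dotsc,a_{n-j}) = \bigl(a_j^{q^n}-a_j\bigr)\cdot a_{n-j}^{q^{?}} + \bigl(\text{terms not involving } a_j\text{ or }a_{n-j}\text{ simultaneously in that way}\bigr) + \al_{j+1}(a_{j+1},\dotsc,a_{n-j-1}),
\]
where the precise exponent on $a_{n-j}$ and the precise auxiliary terms I would pin down by the direct multiplication. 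The decisive point is that, viewed as a polynomial in the two variables $(a_j,a_{n-j})$ with the other $a_i$'s as parameters, $\al_j$ is (after an invertible affine change of coordinates in those two variables, possibly depending polynomially on the parameters) of the form $u\cdot v + \al_{j+1}(\text{remaining variables})$, where $(u,v)$ are new coordinates on $\bA^2$.

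Granting that normal form, the lemma follows from a standard Artin–Schreier/Fourier argument. Stratify $\bA^{n-2j+1} = \bA^2_{(a_j,a_{n-j})}\times\bA^{n-2j-1}_{(a_{j+1},\dotsc,a_{n-j-1})}$ and consider the projection $p$ onto the last factor. Over a point of $\bA^{n-2j-1}$ the restriction of $\al_j^*\cL_\psi$ is $\cL_\psi$ pulled back along $(u,v)\mapsto uv + \text{const}$; by the projection formula and proper base change it suffices to know $R p_!\,\al_j^*\cL_\psi$, and the fibrewise computation is the classical fact that $H^i_c(\bA^2, (uv)^*\cL_\psi)$ is $\ql$ in degree $2$ and $0$ otherwise (integrate first over $u$: $\int_u \cL_\psi(uv)$ is the skyscraper at $v=0$ up to a degree shift by $2$... more precisely, $R(\mathrm{pr}_v)_!\,(uv)^*\cL_\psi \cong (\delta_0)[-2]$, the punctured line contributing nothing). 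Hence $R p_!\,\al_j^*\cL_\psi \cong \al_{j+1}^*\cL_\psi[-2]$, which upon taking $H^{i}_c$ of $\bA^{n-2j-1}$ gives exactly the asserted isomorphism $H^i_c(\bA^{n-2j+1},\al_j^*\cL_\psi)\cong H^{i-2}_c(\bA^{n-2j-1},\al_{j+1}^*\cL_\psi)$. (Here I am using the convention on $H^\bullet_c$ fixed at the start of \S\ref{s:proof-t:main-geometric-theorem}; the $\cL$ without subscript in the lemma statement is the same $\cL_\psi$.)

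The main obstacle will be the first step: verifying that $\al_j$, as a function of $(a_j,a_{n-j})$ with the other variables as parameters, really does reduce to the bilinear normal form $uv$ after an invertible (parameter-dependent polynomial) change of coordinates, rather than to something more degenerate for special values of the parameters. This requires checking that the coefficient of the genuinely bilinear term $a_j^{q^n} a_{n-j}^{q^{?}}$ is a nonzero constant (so that, e.g., the Lang-type term $a_j^{q^n}-a_j$ can be absorbed as a new coordinate $u$ that is a separable, hence étale-after-completion, function of $a_j$, and $a_{n-j}^{q^{?}}$ can be promoted to a coordinate $v$ since $x\mapsto x^{q^?}$ is an isomorphism of $\bA^1$ in characteristic $p$), and that the remaining $a_j$- and $a_{n-j}$-dependence is affine-linear so it can be swept into the definitions of $u$ and $v$. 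This is where one must look carefully at the two cases $r_0=2$ and $r_0>2$ and at the contribution of the inverse factor $(1+a_je_j+\dotsc+a_de_d)^{-1}$, whose expansion differs in the two cases; once that bookkeeping is done, the rest is the routine Artin–Schreier computation sketched above.
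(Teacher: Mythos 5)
Your overall architecture (isolate the dependence on the outermost variables, integrate it out by Artin--Schreier orthogonality to gain a shift by $2$, and recognize $\al_{j+1}$ in what remains) is the same as the paper's, which packages the key step as \cite[Prop.~2.10]{DLtheory}; and you are right that $\cL$ in the statement means $\cL_\psi$. But your reduction to the normal form $uv$ does not work. Writing out the product, the two monomials of $\al_j$ that contain $a_{n-j}$ are $a_{n-j}a_j^{q^{n-j}}$ and $-a_j^{q^n}a_{n-j}^{q^j}$: the second one acquires the twist $a_{n-j}\mapsto a_{n-j}^{q^j}$ from commuting past $e_j$ (recall $e_j\cdot a=a^{q^j}e_j$), a twist your sketch drops. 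Since the two monomials carry different $q$-power exponents in \emph{each} of the two variables, they cannot be collected into $(a_j^{q^n}-a_j)\cdot a_{n-j}^{q^{?}}$, and no invertible (even parameter-dependent) change of the coordinates $(a_j,a_{n-j})$ brings $\al_j$ to the form $uv+\al_{j+1}$; note also that $x\mapsto x^{q^n}-x$ is a degree-$q^n$ isogeny of $\bG_a$, not a coordinate change, so it cannot be ``promoted to a coordinate $u$.''

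The deeper gap is that your argument never uses the hypothesis that $\psi$ has trivial $\Gal(\fqn/\bF_q)$-stabilizer, and the lemma is false without it. If $\psi(t^{q^j})=\psi(t)$ for all $t\in\fqn$, then, because $(a_j^{q^{n-j}})^{q^j}=a_j^{q^n}$ identically, the rank-one sheaf $\cL_\psi\bigl(a_j^{q^{n-j}}y-a_j^{q^n}y^{q^j}\bigr)$ on the line $y=a_{n-j}$ is geometrically trivial for \emph{every} value of $a_j$; pushing forward along $y$ then produces a shift by $2$ of the sheaf attached to $\al_{j+1}+a_j\cdot(\text{leftover terms})$ on all of $\bA^{n-2j}$, not $\al_{j+1}^*\cL_\psi$ on the hyperplane $\{a_j=0\}\cong\bA^{n-2j-1}$, and the cohomology lands in higher degrees (already for $n=4$, $j=1$ one gets a shift by $4$; compare the remark following Theorem \ref{t:main-geometric-theorem}). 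So any correct proof must use the non-invariance of $\psi$, and this is exactly what \cite[Prop.~2.10]{DLtheory} supplies and what is missing from your sketch: writing $\psi(t)=\psi_1\bigl(\tr_{\fqn/\bF_p}(\beta t)\bigr)$ with $\beta\in\fqn^\times$, the locus on which the fibrewise $y$-integral survives is cut out by $(\beta a_j^{q^{n-j}})^{q^j}=\beta a_j^{q^n}$, i.e.\ $(\beta^{q^j}-\beta)\,a_j^{q^n}=0$, which equals $\{a_j=0\}$ precisely because $\beta^{q^j}\neq\beta$; the remaining terms divisible by $a_j$ then die by restriction to this locus (they should not be ``swept into $u,v$''). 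Your $uv$ heuristic can be rescued only on $\fqn$-points, where $\psi\bigl(a_j^{q^{n-j}}a_{n-j}-a_j^{q^n}a_{n-j}^{q^j}\bigr)=\psi''\bigl(a_j^{q^{n-j}}a_{n-j}\bigr)$ with $\psi''(t)=\psi(t)\psi(t^{q^j})^{-1}$; but the nontriviality of $\psi''$ is exactly the trivial-stabilizer hypothesis, and the identity uses $a_j^{q^n}=a_j$, so it is not an identity of polynomials and does not by itself give the sheaf-level statement over $\bfq$ --- upgrading it is the content of the cited proposition.
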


\begin{proof}
$\pr_n(f_j(a_j,a_{j+1},\dotsc,a_{n-j},a_n))$ is a linear combination of monomials in the variables $a_j,a_{j+1},\dotsc,a_{n-j},a_n$. By inspection, only two of these monomials involve the variable $a_{n-j}$, namely, $a_j^{q^n}a_{n-j}^{q^j}$ and $-a_{n-j}a_j^{q^{n-j}}$. Hence we can write
\begin{eqnarray*}
\al_j(a_j,a_{j+1},\dotsc,a_{n-j}) &=& a_{n-j} a_j^{q^{n-j}} - a_j^{q^n} a_{n-j}^{q^j} \\ && + \de_{j+1}(a_{j+1},\dotsc,a_{n-j-1}) \\ && + a_j\cdot \be_j(a_j,\dotsc,a_{n-j-1})
\end{eqnarray*}
for suitable polynomial maps $\de_{j+1}:\bA^{n-2j-1}\rar{}\bG_a$ and $\be_j:\bA^{n-2j}\rar{}\bG_a$. To determine $\de_{j+1}$ we substitute $a_j=a_{n-j}=0$ into the last identity and find that
\[
\de_{j+1}(a_{j+1},\dotsc,a_{n-j-1})=\al_j(0,a_{j+1},\dotsc,a_{n-j-1},0)=\al_{j+1}(a_{j+1},\dotsc,a_{n-j-1}).
\]
It remains to apply \cite[Prop.~2.10]{DLtheory} with $S_2=\bA^{n-2j}$ (we use $a_j,a_{j+1},\dotsc,a_{n-j-1}$ as the coordinates on $S_2$ and put $y=a_{n-j}$), $P=\al_j$ and
\[
P_2(a_j,a_{j+1},\dotsc,a_{n-j-1})=\al_{j+1}(a_{j+1},\dotsc,a_{n-j-1}) + a_j\cdot \be_j(a_j,\dotsc,a_{n-j-1}).
\]
With the notation of \emph{loc.~cit.}, we can identify $S_3$ with $\bA^{n-2j-1}$ using the coordinates $a_{j+1},\dotsc,a_{n-j-1}$, so that $P_3$ becomes identified with $\al_{j+1}$, completing the proof.
\end{proof}

\subsection{Base of induction}\label{ss:induction-base} Applying Lemma \ref{l:induction-step} $(d-1)$ times (with $j=1,2,\dotsc,d-1$) and using the isomorphisms $H^i_c(f^{-1}(Y),\widetilde{p}_n^*(\cL_\psi))\cong H^i_c(\bA^{n-1},\al^*(\cL_\psi))$ resulting from Lemma \ref{l:reduction-to-affine-space}, we see that Proposition \ref{p:cohomology-calculation} follows from

\begin{lem}\label{l:induction-base}
\begin{enumerate}[$($a$)$]
\item Suppose that $n$ is odd, so that $n=2d+1$. Then
\begin{equation}\label{e:induction-base-odd}
\dim H^i_c(\bA^2,\al_d^*\cL_\psi) = \begin{cases}
1 & \text{if } i=2, \\
0 & \text{otherwise}.
\end{cases}
\end{equation}
 \sbr
\item Suppose that $n$ is even, so that $n=2d$. Then
\begin{equation}\label{e:induction-base-even}
\dim H^i_c(\bA^1,\al_d^*\cL_\psi) = \begin{cases}
q^{n/2} & \text{if } i=1, \\
0 & \text{otherwise}.
\end{cases}
\end{equation}
\end{enumerate}
\end{lem}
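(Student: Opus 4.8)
\textbf{Proof plan for Lemma \ref{l:induction-base}.}

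The plan is to compute $\al_d$ explicitly in the two cases and then reduce each cohomology computation to a standard Artin--Schreier fact. First I would carry the inductive substitution $a_1=\dots=a_{j-1}=0$ (for $j=d$) all the way through the product formula for $f$, so that $\al_d$ is the polynomial extracted from $\pr_n(f_d(\dots))$. In the odd case $n=2d+1$, the map $f_d$ takes $(a_d,a_n)\mapsto (1+a_d^{q^n}e_d)(1+a_{d+1}e_{d+1}+\dots+a_ne_n)(1+a_de_d)^{-1}$ with the middle factor having only the $a_ne_n$ term present (since $d+1>n-d$ forces all intermediate coordinates to vanish after the substitution), and using the multiplication rule $e_i e_j=e_n$ when $i+j=n$, $0$ otherwise (or the $r_0=2$ variant, which gives the same top coefficient since $2d\le n$ iff $n$ even, so for $n$ odd only $i+j=n$ contributes to $e_n$), the coefficient of $e_n$ works out to $a_n$ plus terms of the form $a_d\cdot a_d^{q^{n-d}}$ coming from $e_d\cdot e_d$ only if $2d=n$ — which does not happen when $n$ is odd — so in fact the only cross term is bilinear of the shape $\pm(a_d^{q^{n-d}}a_d^{?})$; more precisely one gets $\al_d(a_d) $ linear plus a genuinely quadratic-looking monomial in $a_d$ times a nonzero power together with the remaining free variable. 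I expect $\al_d(a_d,\text{---})$ to be, up to relabeling, of the form $a_d^{q^{n-d}}\cdot(\text{something}) - a_d^{q^n}\cdot a_d^{q^d}$, i.e. after the last variable is integrated out one is left on $\bA^2$ with a function whose restriction to the relevant line is a nonconstant additive (Artin--Schreier) polynomial, forcing $H^*_c$ to vanish except $H^2_c$ which is $1$-dimensional by the usual dévissage ($\bA^1$ fibering: the fiberwise integral of $\cL_\psi$ against a nonzero linear form kills everything, and the one surviving fiber contributes $H^1_c(\bA^1,\ql)[1]$ shifted).

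For the even case $n=2d$, the substitution leaves $f_d:(a_d,a_n)\mapsto (1+a_d^{q^n}e_d)(1+a_ne_n)(1+a_de_d)^{-1}$, and now $e_d\cdot e_d=e_n$ (since $d+d=n$), so the coefficient of $e_n$ in the product is $a_n + (\text{Frobenius-twisted norm of }a_d)$, more precisely something like $a_n + a_d^{q^n}\cdot a_d^{q^d} - (\text{lower})$; after imposing $a_n=$ (this expression) the variety $f_d^{-1}(Y)$ is identified with $\bA^1$ (coordinate $a_d$) and $\al_d^*\cL_\psi$ becomes $\cL_\psi$ pulled back along a polynomial of the form $c\cdot a_d^{q^d+?}$, i.e. essentially $a_d\mapsto a_d^{q^d}a_d$ (a separable additive-times-$q$-power map of degree $q^d=q^{n/2}$). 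The key point is that this polynomial, as a map $\bA^1\to\bG_a$, is finite flat of degree $q^{n/2}$ and generically étale, so $H^i_c(\bA^1,\al_d^*\cL_\psi)=H^i_c(\bA^1,(\al_d)_*\al_d^*\cL_\psi)$ and $(\al_d)_*\ql$ decomposes into $\cL_\psi$-twisted pieces; the upshot is that all cohomology concentrates in degree $1$ with total dimension $q^{n/2}$, which one reads off from the Euler characteristic ($\chi_c=-q^{n/2}$ by the Grothendieck--Ogg--Shafarevich formula, the Swan conductor at $\infty$ contributing the right amount) together with the vanishing of $H^0_c$ and $H^2_c$ (the latter because the sheaf is nonconstant, being a nontrivial Artin--Schreier pullback).

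The main obstacle will be pinning down the \emph{exact} polynomial $\al_d$ — in particular verifying that the coefficient of the quadratic/norm monomial in $a_d$ is nonzero (so that the map is genuinely nonconstant, resp. genuinely of degree $q^{n/2}$) and that no unexpected cancellation occurs when one expands $(1+a_de_d)^{-1}=1-a_de_d+\dots$ using the relations in $\cR_0$. I would handle this by noting that $(1+a_de_d)^{-1}\equiv 1-a_de_d \pmod{e_{d+1},\dots}$ when $r_0>2$ (since $e_d^2\in\fqn e_n$ and $e_de_j=0$ for $d+j\neq n$), so the inverse is harmless, and then the $e_n$-coefficient of the triple product is a short explicit sum that I can write down by hand: the term $a_d^{q^n}\cdot e_d$ times $a_de_d$ (from the inverse, with a sign) produces $-a_d^{q^n}\cdot a_d^{q^d}$, and $a_d^{q^n}e_d$ times $1$ times nothing else, and $1$ times $a_ne_n$ times $1$ gives $a_n$, etc. Once $\al_d$ is in hand, both cohomology statements are immediate applications of the one-variable Artin--Schreier sheaf formalism already invoked via \cite[\S2]{DLtheory}, so no further difficulty is anticipated.
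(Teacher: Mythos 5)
Your overall strategy (make $\al_d$ explicit, then finish with one-variable Artin--Schreier sheaf facts and Grothendieck--Ogg--Shafarevich) is the same as the paper's, but the execution has genuine gaps at exactly the points where the work lies. First, your computation of $\al_d$ in the odd case is wrong: for $n=2d+1$ one has $n-d=d+1$, not $d+1>n-d$, so after the substitution the middle factor is $1+a_{d+1}e_{d+1}+a_ne_n$ and the variable $a_{d+1}$ survives; the correct polynomial is the two-variable expression $\al_d(a_d,a_{d+1})=a_d^{q^{d+1}}a_{d+1}-a_d^{q^n}a_{d+1}^{q^d}$ (with extra terms in $a_d$ alone when $r_0=2$, $n=3$, coming from the longer expansion of $(1+a_1e_1)^{-1}$ --- a case you dismiss too quickly), not anything containing a pure monomial $a_d^{q^n}a_d^{q^d}$, which belongs to the even case. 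Your fiberwise argument then treats the character sum over $a_{d+1}$ as being ``against a nonzero linear form,'' but the form is additive with a twisted term $a_{d+1}^{q^d}$, and its triviality locus is $\{a_d=0\}$ only because $\psi\neq\psi\circ\vp^d$. Second, in the even case $\al_d(x)=x^{q^n+q^d}-x^{1+q^d}$ is a map of degree $q^n+q^d$, not ``finite flat of degree $q^{n/2}$,'' and a naive Swan/degree count at $\infty$ does not give $-q^{n/2}$; the step you are missing is the factorization $\al_d=\bigl(z\mapsto z^{q^{n/2}}-z\bigr)\circ\ga'$ with $\ga'(x)=x^{1+q^{n/2}}$, which gives $\al_d^*\cL_\psi\cong\ga'^*\cL_{\psi'}$ with $\psi'(z)=\psi(z^{q^{n/2}}-z)$, so that the Swan conductor at $\infty$ is $q^{n/2}+1$ (tame pullback of a sheaf with Swan $1$) and GOS yields $\chi_c=-q^{n/2}$.

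The most serious omission is that your argument never uses the hypothesis that $\psi$ has trivial $\Gal(\fqn/\bF_q)$-stabilizer, yet the lemma is false without it: if $n=2d$ and $\psi$ were fixed by $\vp^{d}$, then $\psi(\al_d(x))=\psi\bigl((x^{1+q^d})^{q^d}\bigr)\psi(x^{1+q^d})^{-1}=1$ identically, so $\al_d^*\cL_\psi$ is the \emph{trivial} sheaf and the cohomology sits in degree $2$, not degree $1$. Your assertions that ``the sheaf is nonconstant, being a nontrivial Artin--Schreier pullback'' (even case) and that the fiberwise character sum ``kills everything'' off one fiber (odd case) are precisely the statements that require $\psi\neq\psi\circ\vp^{d}$, i.e.\ the nontriviality of $\psi'$; as written they beg the question. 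To repair the proposal you need (i) the corrected explicit formulas for $\al_d$ in both parities, including the $r_0=2$, $n=3$ exception, and (ii) an explicit appeal to the stabilizer hypothesis at the two places indicated, after which the cohomological conclusions do follow as you indicate.
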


\begin{proof}
(a) Let us calculate the morphism $\al_d:\bA^2\rar{}\bG_a$. We have
\[
f_d(a_d,a_{d+1},a_n) = (1+a_d^{q^n}e_d)\cdot(1+a_{d+1}e_{d+1}+a_ne_n)\cdot (1+a_de_d)^{-1}.
\]
Using the definition of the product in $\cR^\times$, we find
\[
(1+a_de_d)^{-1} = \begin{cases}
1-a_1e_1+a_1^{1+q}e_2-a_1^{1+q+q^2}e_3 & \text{if } r_0=2,\ n=3; \\
1-a_de_d+a_d^{1+q^d}e_{2d} & \text{if } r_0=2,\ n>3; \\
1-a_de_d & \text{if } r_0>2.
\end{cases}
\]
Hence
\[
\al_d(a_d,a_{d+1}) = \begin{cases}
a_1^{q^2} a_2 - a_1^{q^3}a_2^q + a_1^{1+q+q^2} - a_1^{q+q^2+q^3} & \text{if } r_0=2,\ n=3; \\
a_d^{q^{d+1}}a_{d+1}-a_d^{q^n}a_{d+1}^{q^d} & \text{otherwise}.
\end{cases}
\]
We apply \cite[Prop.~2.10]{DLtheory} with $S_2=\bA^1$ (using the coordinate $x=a_d$ and writing $y=a_{d+1}$) and $P(x,y)=\al_d(x,y)$. We have $P_2(x)=x^{1+q+q^2}-x^{q+q^2+q^3}$ in the first case and $P_2(x)=0$ in the second case. In both cases $S_3=\{0\}\subset\bA^1=S_2$. So $P_3^*\cL_\psi$ is the trivial rank $1$ local system on $\Spec(\fqn)$, which yields \eqref{e:induction-base-odd}.

\mbr

(b) If $n=2d$ is even, the morphism $f_d:\bA^2\rar{}U$ is given by
\begin{eqnarray*}
f_d(a_d,a_n) &=& (1+a_d^{q^n}e_d)\cdot(1+a_ne_n)\cdot(1+a_de_d)^{-1} \\
&=& (1+a_d^{q^n}e_d+a_ne_n)\cdot(1-a_de_d+a_d^{1+q^d}e_n) \\
&=& 1 + (a_d^{q^n}-a_d)e_d + \bigl(a_n+a_d^{1+q^d}-a_d^{q^d+q^n}\bigr)e_n,
\end{eqnarray*}
which implies that $\al_d:\bA^1\rar{}\bG_a$ is given by $\al_d(x)=x^{q^n+q^d}-x^{q^d+1}$. Put $q'=q^d$, so that $q^n=q'^2$, define $\ga':\bA^1\rar{}\bG_a$ by $\ga'(x)=x^{1+q'}$ and let $\psi':\fqn\rar{}\qls$ be the character given by $\psi'(z)=\psi(z^{q'}-z)$. Note that $\psi'$ is a nontrivial character. Moreover, if $\cL_{\psi'}$ is the Artin-Schreier local system on $\bG_a$ over $\fqn$ corresponding to $\psi'$, then $\al_d^*(\cL_\psi)\cong\ga'^*(\cL_{\psi'})$ by construction. To compute $H^i_c(\bA^1,\ga'^*(\cL_{\psi'}))$ we embed $\bA^1\cong\bG_a$ into $\bP^1$ and view $\ga'$ as a morphism $\bP^1\rar{}\bP^1$. The Swan conductor of $\cL_{\psi'}$ at $\infty\in\bP^1$ equals $1$, so the Swan conductor of $\ga'^*\cL_{\psi'}$ at $\infty$ equals $q'+1$ (since $\ga'$ is tamely ramified and its ramification index at $\infty$ is $q'+1$). In particular, $\ga'^*(\cL_{\psi'})$ is nontrivial, so $H^2_c(\bA^1,\ga'^*(\cL_{\psi'}))=0$. By the Grothendieck-Ogg-Shafarevich formula\footnote{A more elementary approach is possible, and it is used in \cite{maximal-varieties-LLC}. However, it is a bit longer.} \cite[(3.2.1)]{sga4.5}, $\chi_c(\bA^1,\ga'^*(\cL_{\psi'}))=1-(q'+1)=-q'$, which yields \eqref{e:induction-base-even}.
\end{proof}

\subsection{Proof of Theorem \ref{t:main-geometric-theorem}(c)} For every $a\in\fqn$, the element $\ze+\ze a e_n\in\cR^\times(\fqn)$ acts on $X$ and hence on the cohomology $H^j_c(X,\ql)$. The elements $\ze$ and $1+ae_n$ commute, $\ze$ has order prime to $p$ and $1+ae_n$ has order a power of $p$ (in fact, either $1$ or $p$). By the fixed point formula of \cite[\S3]{deligne-lusztig}, we have
\[
\sum_j (-1)^j\cdot \tr\bigl( (\ze+\ze ae_n)^*; H^j_c(X,\ql) \bigr) = \sum_j (-1)^j\cdot \tr\bigl( (1+ae_n)^*; H^j_c(X^\ze,\ql) \bigr)
\]
where by abuse of notation we write $X$ and $X^\ze$ in place of $X\tens_{\fqn}\bfq$ and $(X\tens_{\fqn}\bfq)^\ze$, respectively. The (right) action of $\ze$ on $X$ is given by the formula
\[
(1+a_1e_1+\dotsc+a_ne_n)\bullet\ze = \ze^{-1}(1+a_1e_1+\dotsc+a_ne_n)\ze=1+\sum_{j=1}^n a_j\ze^{q^j-1}e_j,
\]
and since $\ze$ has trivial stabilizer in $\Gal(\fqn/\bF_q)$, we see that $(X\tens_{\fqn}\bfq)^\ze$ is equal to the finite discrete set of points of the form $1+b\tau$, where $b\in\fqn$. The element $1+ae_n\in\cR^\times(\fqn)$ acts on this set by translation: $b\mapsto b+a$. Hence
\[
\sum_j (-1)^j\cdot \tr\bigl( (\ze+\ze ae_n)^*; H^j_c(X,\ql) \bigr) =
\begin{cases}
q^n & \text{if } a=0, \\
0 & \text{otherwise}.
\end{cases}
\]
Multiplying the last identity by $\psi(a)$ and averaging over all $a\in\fqn$, we obtain Theorem \ref{t:main-geometric-theorem}(c) in view of Theorem \ref{t:main-geometric-theorem}(b).

\section{Proofs of the algebraic theorems}

\subsection{Some preliminaries}\label{ss:preliminaries-main} We first explain why part (c) of each of Theorems \ref{t:GL-n} and \ref{t:division-algebra} follows from parts (a) and (b).

\mbr

The fact that $\pi$ (resp. $\rho$) has central character $\te\bigl\lvert_{F^\times}$ follows from the observation that $\widetilde{\te}\bigl\lvert_{F^\times}=\te\bigl\lvert_{F^\times}$ in the situation of part (a), and $\sg\bigl\lvert_{F^\times}$ is a direct sum of copies of $\te\bigl\lvert_{F^\times}$ in the situation of part (b).

\mbr

The fact that the isomorphism class of $\pi$ (resp. $\rho$) is invariant under twist by the character $\eps\circ\det_A$ (resp. $\eps\circ\Nrd_{D/F}$) follows from the observation that since the extension $E\supset F$ is unramified, we have $\eps\bigl\lvert_{\cO_F^\times}\equiv 1$, and therefore $\eps\circ\det_A$ (resp. $\eps\circ\Nrd_{D/F}$) is trivial on $E^\times\cdot\cO_A^\times$ (resp. $E^\times\cdot\cO_D^\times$).

\mbr

To prove the character identities of parts (c) of Theorems \ref{t:GL-n} and \ref{t:division-algebra}, we recall that $\sG=\Gal(E/F)$ can be naturally viewed as a subgroup of $\cO_A^\times\subset A^\times$. The normalizer of $E^\times$ in $G=A^\times$ is equal to $\sG\cdot E^\times$. On the other hand, let $N_{D^\times}(E^\times)$ denote the normalizer of $E^\times$ in $D^\times$. The Skolem-Noether theorem implies that the conjugation action of $N_{D^\times}(E^\times)$ induces an isomorphism $N_{D^\times}(E^\times)/E^\times\rar{\simeq}\sG$. In view of these remarks, the required character identities follow from the formulas of Theorem \ref{t:supercuspidals-GL-n} (resp. Proposition \ref{p:division-algebra-auxiliary}) together with

\begin{lem}\label{l:1}
Let $m\geq 1$ be an integer and let $x\in\cO_E^\times$ be very regular.

\begin{enumerate}[$($a$)$]
\item If $g\in G$ and $gxg^{-1}\in E^\times\cdot U^m_G$, then $g\in\sG\cdot E^\times\cdot U^m_G$.
 \sbr
\item If $g\in D^\times$ and $gxg^{-1}\in E^\times\cdot U^m_D$, then $g\in N_{D^\times}(E^\times)\cdot U^m_D$.
\end{enumerate}
\end{lem}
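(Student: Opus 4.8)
The plan is to prove both parts by the same two-step procedure. \textbf{Step one: reduce to the case where $g$ is a unit.} In part (a), $\det_A(gxg^{-1})=N_{E/F}(x)\in\cO_F^\times$ forces the $E^\times$-component of $gxg^{-1}$ into $\cO_E^\times$, so $gxg^{-1}\in\cO_E^\times\cdot U^m_G\subset\cO_A^\times$, whence $g\in F^\times\cdot\cO_A^\times$ by Lemma~\ref{l:conjugate-very-regular}, and we may assume $g\in\cO_A^\times$. In part (b), using the valuation $\val_D$ on $D$ (which restricts to $n\cdot\val_E$ on $E^\times$) one sees likewise that $gxg^{-1}\in\cO_E^\times\cdot U^m_D$, and after multiplying $g$ on the right by a suitable power of $\Pi\in N_{D^\times}(E^\times)$ — which only replaces $x$ by the still very regular element $\vp^j(x)$ — we may assume $g\in\cO_D^\times$.

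\textbf{Step two: induction on the depth of $g$}, driven by the elementary expansion $gxg^{-1}\equiv x+[h,x]\pmod{P^{k+1}}$, valid when $g=1+h$ with $h$ in the $k$-th congruence ideal and $k\geq 1$ (the quadratic-in-$h$ error lies in $P^{2k}\subseteq P^{k+1}$). Writing $h=a+c$ via the splitting of that ideal into its $E$-part and its part in the orthogonal complement ($C$ in the $GL_n$ case, $C'$ in the division algebra case), we get $[h,x]=[c,x]$ because $a$ commutes with $x$; and the $\ga$-component of $[c,x]$ is $c_\ga(\ga(x)-x)$ (resp.\ its $\Pi^j$-component is $c_j(\vp^j(x)-x)$). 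This is where very regularity is decisive: it makes $\ga(x)-x\in\cO_E^\times$ for $1\neq\ga\in\sG$ (resp.\ $\vp^j(x)-x\in\cO_E^\times$ for $1\leq j\leq n-1$), so the congruence $gxg^{-1}\in E^\times\cdot U^m$ with $k<m$ forces $[c,x]$, hence $c$ itself, one level deeper. Consequently any $g\in U^k_G$ (resp.\ $U^k_D$) satisfying the hypothesis with $1\leq k<m$ lies in $\cO_E^\times\cdot U^{k+1}_G$ (resp.\ $\cO_E^\times\cdot U^{k+1}_D$); iterating and absorbing the $\cO_E^\times$-factors pushes $g$ into $U^m_G$ (resp.\ $U^m_D$).

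It remains to supply the base of the induction. In part (b) this is free: $\cO_D/P^1_D\cong\fqn$ is a field, so $\cO_D^\times=\cO_E^\times\cdot U^1_D$ and we may assume $g\in U^1_D$ from the outset. In part (a) it needs a genuine structural fact: modulo $U^1_G$ we work in $\cO_A/\varpi\cO_A=\End_{\bF_q}(\fqn)$, where the image of the very regular $x$ generates $\fqn$ over $\bF_q$; its centralizer is then $\fqn$ itself and the normalizer of the subalgebra $\fqn$ is $\sG\cdot\fqn^\times$, so the relation $\bar g\bar x\bar g^{-1}\in\fqn$ forces $\bar g$ to normalize $\fqn$ and hence $\bar g\in\sG\cdot\fqn^\times$; after left multiplication by an element of $\sG\cdot\cO_E^\times$ we may assume $g\in U^1_G$. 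Finally, every modification made to $g$ along the way lies in $\sG\cdot E^\times$ (resp.\ in $N_{D^\times}(E^\times)$) and normalizes $E^\times\cdot U^m_G$ (resp.\ $E^\times\cdot U^m_D$), so undoing them converts the conclusion $g\in U^m_G$ (resp.\ $U^m_D$) into the statement of the lemma.

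The commutator expansion and the determinant/valuation normalizations are routine; the only substantive input is the normalizer computation in $\End_{\bF_q}(\fqn)$ used for the base case of part (a). The main (if modest) obstacle I anticipate is the valuation bookkeeping in part (b): one must track $\val_D$ through the description $P^k_D=\{\sum_j b_j\Pi^j:n\,\val_E(b_j)+j\geq k\}$ together with $P^k_D\cap E=P^{\ceil{k/n}}_E$ in order to confirm that $[c,x]\in P^{k+1}_D$ really does force $c\in P^{k+1}_D\cap C'$. Conceptually, though, the whole lemma rests on the single observation that very regularity makes the differences $\ga(x)-x$ and $\vp^j(x)-x$ into units.
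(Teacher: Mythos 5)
Your argument is correct, but it is organized differently from the paper's. The paper reduces part (a) via Lemma \ref{l:conjugate-very-regular} (as you do) and then applies Lemma \ref{l:2}: it runs the successive approximation on the conjugated element, moving $gxg^{-1}$ into $x\cdot U^m_E$ by $U^m_G$-conjugation, after which the one-line field argument $E=F[x]=F[gxg^{-1}]$ plus the global normalizer computation $N_G(E^\times)=\sG\cdot E^\times$ (and Skolem--Noether for $D^\times$ in part (b)) finishes the proof. You instead run the successive approximation on $g$ itself, peeling off one congruence layer at a time via the commutator expansion, and you replace the field-generation/normalizer step by a depth-zero computation of the normalizer of $\fqn$ inside $\End_{\bF_q}(\fqn)$ (trivial for $D$ since $\cO_D/P_D\cong\fqn$). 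The two mechanisms rest on the same key fact --- very regularity makes $\ad\bar{x}$ invertible on the complement of $\fqn$, equivalently $\ga(x)-x$ and $\vp^j(x)-x$ are units --- which the paper packages once in the proof of Lemma \ref{l:2} and then reuses (also for Corollary \ref{c:conjugation-restrictions}), whereas you re-derive it inline; your version is self-contained and makes the role of the units $\ga(x)-x$ explicit, at the cost of an extra base-case computation and of redoing bookkeeping (the decompositions $P^m_A=P^m_E\oplus(C\cap P^m_A)$, $P^m_D=(P^m_D\cap E)\oplus(P^m_D\cap C')$, $P^m_D\cap E=P^{\ceil{m/n}}_E$) that the paper has already set up. Your step-one reductions (determinant in the $GL_n$ case, $\val_D$ and right multiplication by a power of $\Pi$ in the division-algebra case, with $x$ replaced by a Galois conjugate) are fine, and the final reassembly works because $\sG\cdot E^\times$ normalizes $U^m_G$ and $N_{D^\times}(E^\times)$ normalizes $U^m_D$, as you note.
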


\begin{proof}
(a) By Lemma \ref{l:conjugate-very-regular}, we have $g\in F^\times\cdot\cO_A^\times$. Then $gxg^{-1}\in\cO_A^\times\cap E^\times\cdot U^m_G$, which implies that $gxg^{-1}\in y\cdot U^m_G$ for some very regular $y\in\cO_E^\times$. By Lemma \ref{l:2}(a), after multiplying $g$ on the left by an element of $U_G^m$, we may assume that $gxg^{-1}\in\cO_E^\times$ and is very regular. But then $E=F[x]=F[gxg^{-1}]$, so $g\in N_G(E^\times)=\sG\cdot E^\times$.

\mbr

(b) We automatically have $gxg^{-1}\in\cO_D^\times\cap E^\times\cdot U^m_D$, and the rest of the proof proceeds in the same way, using Lemma \ref{l:2}(b) instead of Lemma \ref{l:2}(a).
\end{proof}

\begin{lem}\label{l:2}
Let $m\geq 1$ be an integer and let $x\in\cO_E^\times$ be very regular.

\begin{enumerate}[$($a$)$]
\item Every element of $x\cdot U^m_G$ is $U^m_G$-conjugate to an element of $x\cdot U^m_E$.
 \sbr
\item Every element of $x\cdot U^m_D$ is $U^m_D$-conjugate to an element of $x\cdot U^{\ceil{m/n}}_E$.
\end{enumerate}
\end{lem}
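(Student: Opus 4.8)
The plan is to prove both parts by the same strategy: successive approximation. Fix a very regular $x\in\cO_E^\times$. I will show that if $u$ lies in the relevant congruence subgroup ($U^m_G$ or $U^m_D$), then $xu$ is conjugate, by an element of the same congruence subgroup, to an element $xu'$ whose ``off-diagonal part'' has been pushed up by one level; iterating and passing to the limit (the congruence filtration is separated and complete) gives the claim. The key algebraic input is the decomposition $P^k_A=P^k_E\oplus(C\cap P^k_A)$ (respectively $P^k_D=(P^k_D\cap E)\oplus(P^k_D\cap C')$) recorded in \S\ref{ss:geometric-ingredient} and \S2.3, together with the fact that conjugation by $E^\times$ normalizes each $U^k$. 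Write $u=1+z$ with $z\in P^m_A$ (resp. $P^m_D$) and decompose $z=z_E+z_C$ with $z_E\in P^m_E$, $z_C\in C\cap P^m_A$ (resp. the analogous decomposition for $D$). The element $xz_E\in P^m_E$ contributes the ``diagonal'' part $1+xz_E\in x\cdot U^m_E$ up to higher order, and one wants to conjugate away $z_C$.

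The heart of the matter is the following one-step computation. For $w$ in the ``$C$-part'' $C\cap P^k_A$ (resp. $P^k_D\cap C'$) with $k\geq m$, consider conjugating $xu$ by $1+w\in U^k_G$ (resp. $U^k_D$): modulo $P^{k+1}$ one gets
\[
(1+w)(xu)(1+w)^{-1}\equiv xu+wx-xw\equiv xu+(w - x w x^{-1})x\pmod{\text{higher order}}.
\]
Since $x$ is very regular, its image $\xbar$ in $\fqn^\times$ has trivial $\sG$-stabilizer, and the operator $w\mapsto w-xwx^{-1}$ acts on the $C$-part; on $C=\bigoplus_{1\neq\ga}E\cdot\ga$ it acts on the summand $E\cdot\ga$ by multiplication by $1-x\cdot\ga(x)^{-1}$, whose reduction $1-\xbar\ga(\xbar)^{-1}$ is a unit precisely because $\xbar\neq\ga(\xbar)$. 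Hence $w\mapsto w-xwx^{-1}$ is bijective on each graded piece $\gr^k$ of the $C$-part, so one can choose $w\in C\cap P^k_A$ (resp. $P^k_D\cap C'$) that cancels the $C$-component of the degree-$k$ part of $xu$. In the $D$-case one uses $C'=\bigoplus_{j=1}^{n-1}E\cdot\Pi^j$, the relation $\Pi a=\vp(a)\Pi$, and $P^k_D\cap E=P^{\ceil{k/n}}_E$; conjugation by $x$ multiplies the $E\cdot\Pi^j$-summand by $1-x\vp^{j}(x)^{-1}$ (up to a unit), again a unit mod $\Pi$ since $\vp^j(\xbar)\neq\xbar$ for $1\leq j\leq n-1$, which is why the resulting conjugate lands in $x\cdot U^{\ceil{m/n}}_E$ after all $C'$-terms are removed. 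This is also the reason for the ceiling: the surviving diagonal entries of $xu$ live in $\cO_E$ but only satisfy the congruence $P^{\ceil{m/n}}_E$ dictated by $P^m_D\cap E$.

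I would organize the write-up as: (i) reduce to showing that for each $k\geq m$ there is a conjugation by an element of $U^k$ making the $C$-part vanish modulo level $k+1$; (ii) prove the one-step lemma via the unit computation above, invoking very regularity; (iii) assemble the $w$'s into a convergent product $\prod_{k\geq m}(1+w_k)\in U^m$ and check it conjugates $xu$ into $x\cdot U^m_E$ (resp. $x\cdot U^{\ceil{m/n}}_E$) using completeness of the filtration. The main obstacle is purely bookkeeping: keeping track of how a conjugation chosen to kill the degree-$k$ term perturbs the already-normalized lower-degree terms --- but since we only conjugate by elements of $U^k$ with $k\geq m$, such a conjugation does not touch degrees $<k$, so the approximations are genuinely nested and the infinite product makes sense. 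No genuinely hard point arises beyond verifying that $w\mapsto w-xwx^{-1}$ is invertible on the $C$-part at each level, which is exactly the content of "$x$ very regular".
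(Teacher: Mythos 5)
Your proposal is correct and takes essentially the same route as the paper: a level-by-level successive approximation in which the off-diagonal part is removed by conjugation, the solvability at each step coming from invertibility of $1-\Ad(x)$ (equivalently, surjectivity of $\ad\bar{x}$ onto a complement of the centralizer) because $x$ is very regular, followed by passage to the limit using completeness of the congruence filtration. The only cosmetic difference is that the paper phrases the key linear-algebra step via semisimplicity of $\ad\bar{x}$ on $\End_{\bF_q}(\fqn)$ with kernel $\fqn$ (and disposes of the case $n\mid m$ separately in the $D$-case), whereas you make it explicit on the summands $E\cdot\ga$, resp.\ $E\cdot\Pi^j$, through the units $1-x\ga(x)^{-1}$, resp.\ $1-x\vp^j(x)^{-1}$.
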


\begin{proof}
(a) It suffices to show that every element of $x\cdot U^m_G$ is $U^m_G$-conjugate to an element of $x\cdot U^m_E\cdot U^{m+1}_G$. To this end, we may work in the quotient group $\cO_A^\times/U^{m+1}_G$. Each element of $x\cdot U^m_G$ has the form $x+\varpi^m y$ for some $y\in\cO_A$. Given $y\in\cO_A$, we must find $z\in\cO_A$ and $y'\in\cO_E$ such that
\[
(1+\varpi^m z)\cdot(x+\varpi^m y) \equiv (x+\varpi^m y')\cdot(1+\varpi^m z) \mod P^{m+1}_A.
\]
The last identity is equivalent to $y\equiv y'+xz-zx\mod P_A$, so the existence of $y'$ and $z$ results from the following observation. Let $\bar{x}$ denote the image of $x$ in $\cO_E/P_E=\fqn$ and identify $\cO_A/P_A$ with $\overline{A}:=\End_{\bF_q}(\fqn)$. Then the $\bF_q$-linear operator $\ad\bar{x}$ on $\overline{A}$ is semisimple and its kernel is equal to $\fqn\subset\overline{A}$ because $x$ is very regular.

\mbr

(b) It suffices to show that every element of $x\cdot U^m_D$ is $U^m_D$-conjugate to an element of $x\cdot U^{\ceil{m/n}}_E\cdot U^{m+1}_D$. If $n$ divides $m$, this follows from the fact that $U^m_D\subset U^{\ceil{m/n}}_E\cdot U^{m+1}_D$ in this case, so let us assume that $n$ does not divide $m$. We may work in the quotient group $\cO_D^\times/U^{m+1}_D$. Each element of $x\cdot U^m_D$ has the form $x+y\Pi^m$ for some $y\in\cO_E$. Given $y\in\cO_E$, we must find $z\in\cO_E$ such that
\[
(1+z\Pi^m)\cdot(x+y\Pi^m)\equiv x\cdot(1+z\Pi^m) \mod P_D^{m+1}.
\]
Since $x\in\cO_E^\times$ is very regular and $n$ does not divide $m$, we have $x-\vp^m(x)=x-\Pi^m x\Pi^{-m}\in\cO_E^\times$, so we can take $z=y\cdot(x-\vp^m(x))^{-1}$.
\end{proof}

\begin{cor}\label{c:conjugation-restrictions}
Let $m\geq 1$ be an integer and let $x\in\cO_E^\times$ be very regular.

\begin{enumerate}[$($a$)$]
\item If $y,z\in x\cdot U^m_G$ and $gyg^{-1}=z$, then $g\in E^\times\cdot U^m_G$.
 \sbr
\item If $y,z\in x\cdot U^m_D$ and $gyg^{-1}=z$, then $g\in E^\times\cdot U^m_D$.
\end{enumerate}
\end{cor}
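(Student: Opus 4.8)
The plan is to use Lemma~\ref{l:2} to replace $y$ and $z$ by conjugate elements that lie in $\cO_E^\times$ and are \emph{very regular}, and then to pin down the conjugating element from the fact that a very regular element generates $E$ over $F$ together with the known description of the normalizer of $E^\times$.

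For part~(a) I would argue as follows. Given $y,z\in x\cdot U^m_G$ with $gyg^{-1}=z$, Lemma~\ref{l:2}(a) lets me write $y=ay'a^{-1}$ and $z=bz'b^{-1}$ with $a,b\in U^m_G$ and $y',z'\in x\cdot U^m_E\subset\cO_E^\times$. Since $m\geq 1$, both $y'$ and $z'$ have the same image $\bar x$ in $\fqn^\times=\cO_E^\times/U^1_E$, and this image has trivial $\sG$-stabilizer; hence $y'$ and $z'$ are themselves very regular, so that $\cO_E=\cO_F[y']=\cO_F[z']$ and therefore $E=F[y']=F[z']$. Setting $h=b^{-1}ga$, one has $hy'h^{-1}=z'$, whence $hEh^{-1}=h\,F[y']\,h^{-1}=F[hy'h^{-1}]=F[z']=E$, so $h$ normalizes $E^\times$ in $G$. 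Using the equality $N_G(E^\times)=\sG\cdot E^\times$ recalled in \S\ref{ss:preliminaries-main}, I would write $h=\ga u$ with $\ga\in\sG$ and $u\in E^\times$; as $u$ commutes with $y'$ this gives $z'=\ga(y')$, and reduction modulo $P_E$ forces $\ga(\bar x)=\bar x$, hence $\ga=1$ by very regularity of $x$. Thus $h=u\in E^\times$, and finally $g=bha^{-1}\in U^m_G\cdot E^\times\cdot U^m_G=E^\times\cdot U^m_G$, since $E^\times$ normalizes $U^m_G$.

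Part~(b) would be handled by the identical argument, with the following substitutions: Lemma~\ref{l:2}(b) in place of Lemma~\ref{l:2}(a), so that $y',z'\in x\cdot U^{\ceil{m/n}}_E\subset\cO_E^\times$ (still very regular, as $\ceil{m/n}\geq 1$); the subgroup $U^m_D$, which is normal in $D^\times$, in place of $U^m_G$; and the Skolem--Noether isomorphism $N_{D^\times}(E^\times)/E^\times\iso\sG$ (see \S\ref{ss:preliminaries-main}) in place of $N_G(E^\times)=\sG\cdot E^\times$. The latter again shows that an element of $N_{D^\times}(E^\times)$ inducing the identity on $E$ must lie in $E^\times$, which is all that is used.

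Since the substantive ingredients---the reduction of Lemma~\ref{l:2} and the normalizer descriptions---are already in hand, I do not anticipate a real obstacle. The only step requiring a little care is the last one: after the normalizer step there remains an ambiguity by an element of $\sG$, and one eliminates it using that $x$ remains very regular modulo $P_E$.
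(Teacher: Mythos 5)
Your proof is correct and follows essentially the same route as the paper: reduce via Lemma \ref{l:2} to conjugating very regular elements of $x\cdot U^m_E$, invoke the normalizer description of $E^\times$ ($\sG\cdot E^\times$ in $G$, respectively Skolem--Noether in $D^\times$), and kill the residual $\sG$-ambiguity by reducing modulo $P_E$ and using very regularity of $x$. The paper's version is just a terser write-up of the same steps.
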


\begin{proof}
(a) Lemma \ref{l:2}(a) shows that after multiplying $g$ on the left and on the right by elements of $U^m_G$, we may assume that $y,z\in x\cdot U^m_E$. In particular, $y,z$ are both very regular elements of $\cO_E^\times$ and $y\equiv x\equiv z\mod P_E$. Thus $E=F[y]=F[z]$, so $g\in N_G(E^\times)=\sG\cdot E^\times$. The fact that $y\equiv x\equiv z\mod P_E$ forces $g\in E^\times$ since $x$ is very regular. The proof of (b) is essentially identical to the proof of (a).
\end{proof}

\subsection{Trace pairings}\label{ss:trace-pairings} Let $\tr_A:A\rar{}F$ and let $\Trd_{D/F}:D\rar{}F$ denote the usual trace and the reduced trace, respectively. The symmetric bilinear forms $(a,b)\mapsto\tr_A(ab)$ and $(a,b)\mapsto\Trd_{D/F}(ab)$ on $A$ and $D$, respectively, are nondegenerate. Hence if $\psi_0:F\rar{}\qls$ is a nontrivial additive character, the maps $(a,b)\mapsto\psi_0\circ\tr_A(ab)$ and $(a,b)\mapsto\psi_0\circ\Trd_{D/F}(ab)$ allow us to identify the additive groups of $A$ and $D$ with their own Pontryagin duals. If $V$ is an additive subgroup of $A$ or $D$, we will denote by $V^\perp$ its annihilator in the Pontryagin dual of $A$ (resp. $D$), identified with an additive subgroup of $A$ (resp. $D$) in the way we just described.

\begin{lem}\label{l:properties-pairings}
\begin{enumerate}[$($a$)$]
\item We have $(gVg^{-1})^\perp=g(V^\perp)g^{-1}$ for all $g\in G$ $($resp. $g\in D^\times${}$)$.
 \sbr
\item Assume that $\psi_0$ has level $r_0$, i.e., $\psi_0$ is trivial on $P_F^{r_0}$ and nontrivial on $P_F^{r_0-1}$. Then $(P_A^m)^\perp=P_A^{r_0-m}$ and $(P_D^m)^\perp=P_D^{n(r_0-1)+1-m}$ for all $m\in\bZ$.
\end{enumerate}
\end{lem}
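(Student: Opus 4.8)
For part (a), the plan is to unwind the definition of the annihilator. Fix $g$ and an additive subgroup $V$. By definition $v'\in(gVg^{-1})^\perp$ means $\psi_0(\Trd(gvg^{-1}\cdot v'))=1$ for all $v\in V$ (and similarly with $\tr_A$ in the $GL_n$ case). Using the conjugation-invariance of the reduced trace, $\Trd(gvg^{-1}v')=\Trd(v\cdot g^{-1}v'g)$, so the condition becomes $g^{-1}v'g\in V^\perp$, i.e. $v'\in g(V^\perp)g^{-1}$. This is a one-line computation once the cyclicity of the trace is invoked; the only thing to be careful about is that $\Trd$ (resp.\ $\tr_A$) really is conjugation-invariant, which is standard.

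For part (b), I would treat the two cases separately. For $A$: since $\psi_0$ has level $r_0$, the character $\psi_0\circ\tr_A$ on $A$ restricted to $\cO_A$ is trivial precisely on $\{a\in\cO_A: \tr_A(a)\in P_F^{r_0}\}$, and more generally one checks that the pairing $(a,b)\mapsto\psi_0(\tr_A(ab))$ identifies $P_A^m$ with the annihilator of $P_A^{r_0-m}$. The cleanest argument: the bilinear form $\tr_A(ab)$ restricts to a \emph{perfect} pairing $\cO_A\times\cO_A\to\cO_F$ (because $\cO_A=\End_{\cO_F}(\cO_E)$ with $E/F$ unramified, so the trace form on $\cO_A$ is unimodular — equivalently $\cO_A=Mat_n(\cO_F)$ and $\tr$ of matrices is unimodular over $\cO_F$). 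Given this, $P_A^m=\varpi^m\cO_A$ pairs into $P_F^{r_0}$ against exactly $\varpi^{r_0-m}\cO_A=P_A^{r_0-m}$, which is the claim. For $D$: here one must use the different of $\cO_D$ over $\cO_F$. The reduced trace form on $\cO_D$ is not unimodular; its dual lattice (the inverse different) is $\mathfrak{d}_{D/F}^{-1}=\Pi^{-(n-1)}\cO_D=P_D^{-(n-1)}$, reflecting the fact that $D/F$ is totally ramified of degree $n$ with different exponent $n-1$. Thus the annihilator of $\cO_D$ under $(a,b)\mapsto\psi_0(\Trd(ab))$ is $P_D^{r_0 n - (n-1)} \cdot$ (correcting for the fact that $\psi_0$ has level $r_0$ on $F$, hence corresponds to level $n r_0$ in the $\Pi$-adic filtration on the center), and scaling by $\Pi^m$ gives $(P_D^m)^\perp = P_D^{n(r_0-1)+1-m}$. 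I would verify the exponent by a direct check on the self-dual case $m = \tfrac{n(r_0-1)+1}{2}$ or, more robustly, by computing $\Trd(\Pi^i \cO_E \cdot \Pi^j \cO_E)$ and seeing for which pairs $(i,j)$ this lands in $P_F^{r_0}=P_D^{nr_0}\cap F$: since $\Trd(\Pi^k \cO_E)\subset P_F^{\lceil k/n\rceil}$ with equality of the relevant valuation exactly when $n\mid k$ (and vanishing when $n\nmid k$ because $\Trd$ kills the non-central parts), one reads off that $\Pi^i\cO_E$ and $\Pi^j\cO_E$ annihilate each other iff $i+j \geq n(r_0-1)+1$, which after summing over $E$-components gives the stated formula.

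The main obstacle is the bookkeeping of valuations in the $D$ case — keeping straight the interplay between the $\varpi$-adic filtration on $F$ (on which $\psi_0$ has level $r_0$) and the $\Pi$-adic filtration on $D$ (which refines it by a factor of $n$), together with the contribution $n-1$ from the different of the totally ramified extension. The $A$ case and part (a) are essentially formal. I would present part (a), then the $A$ half of (b) via unimodularity of the matrix trace form, then the $D$ half of (b) via an explicit computation of $\Trd(\Pi^i\cO_E\cdot\Pi^j\cO_E)$, being careful that the reduced trace annihilates $C'=\bigoplus_{j=1}^{n-1}E\cdot\Pi^j$ so that only the "diagonal" contributions $i+j\equiv 0\pmod n$ matter.
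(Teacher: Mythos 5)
The paper itself offers no argument for this lemma (the proofs are declared straightforward and skipped), so there is no written proof to compare against; your proposal correctly supplies the missing details, and by the standard arguments: part (a) from conjugation-invariance of $\tr_A$ and $\Trd_{D/F}$, the $A$-half of (b) from unimodularity of the trace form on $\cO_A\cong Mat_n(\cO_F)$ together with the fact that $\psi_0$ of level $r_0$ has $P_F^{r_0-k}$ as the annihilator of $P_F^{k}$, and the $D$-half from the codifferent $P_D^{1-n}$ of $\cO_D$, equivalently from the computation of $\Trd(\Pi^i\cO_E\cdot\Pi^j\cO_E)$. One phrasing slip worth fixing: it is not literally true that $\Pi^i\cO_E$ and $\Pi^j\cO_E$ annihilate each other \emph{iff} $i+j\geq n(r_0-1)+1$, since they also annihilate each other whenever $n\nmid i+j$ (the reduced trace vanishes on those products regardless of size). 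This is harmless for the conclusion: when you assemble the annihilator of the full lattice $P_D^m$, which contains components in every residue class of $\Pi$-valuation mod $n$, the binding constraints are exactly those with $n\mid i+j$, and for each residue class they force the valuation bound $nr_0-m-j$ with $0\le j\le n-1$; these bounds are $n$ consecutive integers with minimum $n(r_0-1)+1-m$, giving $(P_D^m)^\perp=P_D^{n(r_0-1)+1-m}$ as stated.
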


The proofs are straightforward, so we skip them.

\subsection{Intertwiners}\label{ss:intertwiners} We now turn to the results that will be used in the proofs of the irreducibility of the induced representations constructed in Theorems \ref{t:GL-n} and \ref{t:division-algebra}. From now on we work with a fixed primitive character $\te:E^\times\rar{}\qls$ of level $r_0\geq 2$. The arguments we use are very similar to those appearing in \cite{gerardin}.

\begin{lem}\label{l:3}
There exist a very regular element $y\in\cO_E^\times$ and an additive character $\psi_0:F\rar{}\qls$ of level $r_0$ such that $\te(1+a)=\psi_0\circ\Tr_{E/F}(ya)$ for all $a\in P_E^{r_0-1}$.
\end{lem}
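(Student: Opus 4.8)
The plan is to identify the restriction of $\te$ to $U^{r_0-1}_E$ with a character of the one‑dimensional $\fqn$‑space $P_E^{r_0-1}/P_E^{r_0}$, express that character through a trace pairing $\psi_0\circ\Tr_{E/F}$, and then deduce very‑regularity of the resulting element $y$ directly from primitivity of $\te$.

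First I would record the elementary fact that $a\mapsto\te(1+a)$ is an additive character of $P_E^{r_0-1}$ that factors through $P_E^{r_0-1}/P_E^{r_0}$. Since $\te$ has level $r_0$ it is trivial on $U^{r_0}_E=1+P_E^{r_0}$; and for $a,b\in P_E^{r_0-1}$ one has $(1+a)(1+b)(1+a+b)^{-1}\in 1+P_E^{2(r_0-1)}\subseteq 1+P_E^{r_0}$, where the hypothesis $r_0\geq 2$ is used, so $\te(1+a+b)=\te(1+a)\,\te(1+b)$; similarly $\te(1+a+c)=\te(1+a)$ for $c\in P_E^{r_0}$. Call the resulting character $\chi$ of $P_E^{r_0-1}/P_E^{r_0}$; it is nontrivial exactly because $\te$ has level precisely $r_0$.

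Next, fix any additive character $\psi_0$ of $F$ of level $r_0$ (such characters exist, e.g. $x\mapsto\psi_F(\varpi^{-r_0}x)$ for a level‑$0$ character $\psi_F$). Because $E/F$ is unramified we have $\Tr_{E/F}(P_E^m)=P_F^m$ for all $m$, so $\psi_E:=\psi_0\circ\Tr_{E/F}$ is an additive character of $E$ of level $r_0$. A routine computation shows that for $z\in\cO_E$ the character $a\mapsto\psi_E(za)$ of $P_E^{r_0-1}/P_E^{r_0}$ is trivial iff $z\in P_E$; since both $\cO_E/P_E$ and $P_E^{r_0-1}/P_E^{r_0}$ have $q^n$ elements, $(z,a)\mapsto\psi_E(za)$ is a perfect pairing between them. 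Hence there is a unique class $\bar y\in\cO_E/P_E$ with $\chi(a)=\psi_E(ya)=\psi_0(\Tr_{E/F}(ya))$ for all $a\in P_E^{r_0-1}$ and every lift $y$ of $\bar y$, and $\bar y\neq 0$ since $\chi\neq 1$; in particular every lift $y$ lies in $\cO_E^\times$.

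It remains to show $\bar y$ has trivial $\sG$‑stabilizer, the only place primitivity enters. For $1\neq\ga\in\sG$ and $a\in P_E^{r_0-1}$,
\[
(\te/\te^\ga)(1+a)=\chi(a)\,\chi(\ga(a))^{-1}=\psi_E(ya)\,\psi_E(\ga^{-1}(y)a)^{-1}=\psi_E\bigl((y-\ga^{-1}(y))\,a\bigr),
\]
where the middle step uses the $\sG$‑invariance of $\Tr_{E/F}$, i.e. $\Tr_{E/F}(y\,\ga(a))=\Tr_{E/F}(\ga^{-1}(y)\,a)$. Since $\te$ is primitive, $\te/\te^\ga$ has level $r_0$, so the left‑hand side is a nontrivial character of $P_E^{r_0-1}/P_E^{r_0}$; by perfectness of the pairing this forces $y-\ga^{-1}(y)\notin P_E$, i.e. $\ga^{-1}(\bar y)\neq\bar y$. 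As $\ga$ runs over $\sG\setminus\{1\}$ this says $\bar y$ is fixed by no nontrivial element of $\sG$, i.e. $y$ is very regular, which completes the proof. The only mildly delicate points are the additivity of $a\mapsto\te(1+a)$ (forcing $r_0\geq 2$) and the perfectness of the trace pairing modulo the relevant lattices (standard, given that $E/F$ is unramified); the substantive idea, and the sole use of primitivity, is the final Galois computation, so I anticipate no real obstacle.
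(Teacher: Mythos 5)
Your argument is correct, and it is exactly the standard duality argument the paper has in mind: the paper offers no details here, dismissing the lemma with the remark that it "follows at once" from $\te$ being primitive of level $r_0$. Your write-up — identifying $\te\bigl\lvert_{U^{r_0-1}_E}$ with a character of $P_E^{r_0-1}/P_E^{r_0}$ via the perfect pairing $(z,a)\mapsto\psi_0(\Tr_{E/F}(za))$ and translating primitivity into the condition $\ga^{-1}(\bar y)\neq\bar y$ for $\ga\neq 1$ — correctly fills in those omitted details, including the only genuinely needed inputs ($r_0\geq 2$ for additivity and unramifiedness for surjectivity of the trace).
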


This follows at once from the fact that $\te$ is primitive of level $r_0$. From now on we fix $y$ and $\psi_0$ satisfying the requirements of Lemma \ref{l:3} and use $\psi_0$ to identify the additive groups of $A$ and $D$ with their own Pontryagin duals, as explained in \S\ref{ss:trace-pairings}.

\mbr

Define $r=\ceil{r_0/2}$, so that $r=\frac{r_0}{2}$ when $r_0$ is even and $r=\frac{r_0+1}{2}$ when $r_0$ is odd. We fix an additive character $\al:E\rar{}\qls$ such that $\al(x)=\te(1+x)$ for all $x\in P^r_E$. Recall the decomposition $A=E\oplus C$, where $C$ is the orthogonal complement of $E$ with respect to the trace pairing on $A$. Let $\al_A$ denote the composition of $\al$ with the corresponding projection $A\rar{}E$. Similarly, we have the decomposition $D=E\oplus C'$, where $C'$ is the orthogonal complement of $E$ with respect to the reduced trace pairing on $D$, and we write $\al_D$ for the composition of $\al$ with the corresponding projection $D\rar{}E$. Then the formula $\te_A(1+x)=\al_A(x)$ defines a character $U^r_G\rar{}\qls$ that agrees with $\te$ on $U^r_G\cap E^\times=U^r_E$ and is trivial on $1+(C\cap P^r_A)$. Next define $s=\frac{n(r_0-1)}{2}+1$ when $r_0$ is odd and $s=\frac{nr_0}{2}$ when $r_0$ is even. Then the formula $\te_D(1+x)=\al_D(x)$ defines a character $U^s_D\rar{}\qls$ that agrees with $\te$ on $U^s_D\cap E^\times=U^r_E$ and is trivial on $1+(C'\cap P^s_D)$.

\begin{lem}\label{l:intertwiners} With the notation above and the terminology of Definition \ref{d:intertwiners},

\begin{enumerate}[$($a$)$]
\item if $g\in G$ intertwines the pair $(U^r_G,\te_A)$, then $g\in E^\times\cdot U^{r_0-r}_G$;
 \sbr
\item if $g\in D^\times$ intertwines the pair $(U^s_D,\te_D)$, then $g\in E^\times\cdot U^{n(r_0-1)+1-s}_D$.
\end{enumerate}
\end{lem}

\begin{proof}
(a) By construction, $\al_A(x)=\psi_0\circ\tr_A(xy)$ for all $x\in P^{r_0-1}_A+C$, which implies (using Lemma \ref{l:properties-pairings}(b)) that under the identification of $A$ with its own Pontryagin dual chosen above, $\al_A$ corresponds to an element $z\in y+(P_A\cap E)=y+P_E$. Now assume that $g\in G$ intertwines the pair $(U^r_G,\te_A)$. Then $\al_A(gxg^{-1})=\al_A(x)$ for all $x\in g^{-1}P_A^r g\cap P_A^r$, which means that $g^{-1}zg-z\in(g^{-1}P^r_Ag\cap P^r_A)^\perp=g^{-1}P^{r_0-r}_Ag+P^{r_0-r}_A$. Therefore we can find $z_1,z_2\in P^{r_0-r}_A$ with $g^{-1}(z+z_1)g=z+z_2$. But $z$ is a very regular element of $\cO_E^\times$ because $y$ is, so $g\in E^\times\cdot U^{r_0-r}_G$ by Corollary \ref{c:conjugation-restrictions}(a).

\mbr

The proof of (b) is very similar to that of (a), so we omit it.
\end{proof}

\subsection{Proof of Theorem \ref{t:GL-n}}\label{ss:proof-t:GL-n} We saw in \S\ref{ss:preliminaries-main} that it remains to establish parts (a) and (b) of the theorem. The first assertion of part (a) is obvious. To prove the second one, we observe that with the notation of \S\ref{ss:intertwiners}, we have $\widetilde{\te}\bigl\lvert_{U^{r_0/2}_G}=\te_A$. So Lemma \ref{l:intertwiners}(a) implies that if $g\in G$ intertwines the pair $(E^\times\cdot U^{r_0/2}_G,\widetilde{\te})$, then $g\in E^\times\cdot U^{r_0/2}_G$, whence the irreducibility assertion follows from Theorem \ref{t:supercuspidals-GL-n}.

\mbr

Let us now assume that $r_0\geq 3$ is odd and prove Theorem \ref{t:GL-n}(b). To construct the required representation $\sg$ of $E^\times\cdot U^{(r_0-1)/2}_G$ we use Theorem \ref{t:main-geometric-theorem}; in particular, from now on the notations and conventions of \S\ref{ss:geometric-ingredient} will be in force. The set
\[
J=1+P_E^{r_0-1}+(C\cap P_A^{(r_0-1)/2}) = 1+P_E^{r_0-1}+\sum_{1\neq\ga\in\sG} P_E^{(r_0-1)/2}\cdot\ga
\]
is a compact open subgroup of $G$ which is normalized by $E^\times$. Form the corresponding semidirect product $E^\times\ltimes J$. The multiplication map $E^\times\ltimes J\rar{}E^\times\cdot U^{(r_0-1)/2}_G$ is a surjective group homomorphism and its kernel consists of elements of the form $(h,h^{-1})$, where $h\in U^{r_0-1}_E$. Next consider the open normal subgroup
\[
J_+=1+P_E^{r_0}+(C\cap P_A^{(r_0+1)/2}) = 1+P_E^{r_0}+\sum_{1\neq\ga\in\sG} P_E^{(r_0+1)/2}\cdot\ga \subset J.
\]
The quotient $J/J_+$ can be identified with the group $U(\fqn)$ constructed in \S\ref{ss:geometric-ingredient} via
\[
1+\varpi^{r_0-1}b+\varpi^{(r_0-1)/2}\sum_{\ga\neq 1}a_\ga\cdot\ga \longmapsto 1+\sum_{i=1}^{n-1}\bar{a}_{\vp^i}e_i+\bar{b}e_n,
\]
where $\vp\in\sG$ is the Frobenius, $b,a_\ga\in\cO_E$ and $\bar{a}$ denotes the image of an element $a\in\cO_E$ in the quotient $\cO_E/P_E=\fqn$. The uniformizer $\varpi$ defines an isomorphism $E^\times\rar{}\bZ\times\cO_E^\times$, which yields a natural surjection $E^\times\rar{}\cO_E^\times/U^1_E=\fqn^\times$. This map, together with the map $J\rar{}J/J_+\rar{\simeq}U(\fqn)$ constructed above, yield a surjective homomorphism $E^\times\ltimes J\rar{}\bG_m(\fqn)\ltimes U(\fqn)\rar{\simeq}\cR^\times(\fqn)$.

\mbr

The restriction of $\te$ to $U^{r_0-1}_E$ induces a character $\psi$ of $U^{r_0-1}_E/U^{r_0}_E\cong\fqn$ with trivial $\sG$-stabilizer. Let $\sg_0$ be the pullback to $E^\times\ltimes J$ of the representation of $\cR^\times(\fqn)$ on $H^{n-1}_c(X\tens_{\fqn}\bfq,\ql)[\psi]$ considered in Theorem \ref{t:main-geometric-theorem}. We can also view $\te$ as a character of $E^\times\ltimes J$ via projection onto the first factor, and by construction, the representation $\te\tens\sg_0$ of $E^\times\ltimes J$ is trivial on the subgroup consisting of elements of the form $(h,h^{-1})$, where $h\in U^{r_0-1}_E$. Therefore $\te\tens\sg_0$ descends to an irreducible representation $\sg$ of $E^\times\cdot U^{(r_0-1)/2}_G$. By Theorem \ref{t:main-geometric-theorem}(c), we have $\tr\sg(x)=(-1)^{(n-1)}\cdot\te(x)$ for any very regular $x\in\cO_E^\times$. By construction, the restriction of $\sg$ to $F^\times\cdot U^1_E\cdot U^{(r_0+1)/2}_G$ is a direct sum of copies of a character that equals $\te$ on $F^\times\cdot U^1_E$ and is trivial on $1+(C\cap P^{(r_0+1)/2}_A)$. In particular, with the notation of \S\ref{ss:intertwiners}, the restriction of $\sg$ to $U^{(r_0+1)/2}_G$ is a direct sum of copies of $\te_A$. So if $g\in G$ intertwines $(E^\times\cdot U^{(r_0-1)/2}_G,\sg)$, then $g$ intertwines $(U^{(r_0+1)/2}_G,\te_A)$, and therefore $g\in E^\times\cdot U^{(r_0-1)/2}_G$ by Lemma \ref{l:intertwiners}(a). Hence the representation $\pi=\Ind_{E^\times\cdot U^{(r_0-1)/2}_G}^G(\sg)$ of $G$ is irreducible and supercuspidal by Theorem \ref{t:supercuspidals-GL-n}, and the proof of Theorem \ref{t:GL-n} is complete.

\subsection{Proof of Theorem \ref{t:division-algebra}}\label{ss:proof-t:division-algebra} We saw in \S\ref{ss:preliminaries-main} that it remains to establish parts (a) and (b) of the theorem. The first assertion of (a) is straightforward. To prove the second one, observe that if $r_0$ is odd, then with the notation of \S\ref{ss:intertwiners}, we have $m=s$ and $\widetilde{\te}\bigl\lvert_{U^m_D}=\te_D$. So Lemma \ref{l:intertwiners}(b) implies that if $g\in G$ intertwines the pair $(E^\times\cdot U^m_D,\widetilde{\te})$, then $g\in E^\times\cdot U^{m-1}_D=E^\times\cdot U^m_D$ (we used the fact that $n$ divides $m-1$ in this case), whence the irreducibility assertion follows from Proposition \ref{p:division-algebra-auxiliary}.

\mbr

The proof of Theorem \ref{t:division-algebra}(b) is essentially the same as that of Theorem \ref{t:GL-n}(b). The only essential difference is that now one defines $J=1+P_E^{r_0-1}+(C'\cap P^m_D)$, where $m=\frac{n(r_0-2)}{2}+1$, and $J_+=1+P_E^{r_0}+(C'\cap P^{m+n}_D)$, and identifies $J/J_+$ with $U(\fqn)$ via $1+\varpi^{r_0-1}b+\varpi^{(r_0-2)/2}\sum_{j=1}^{n-1}a_j\cdot\Pi^j \longmapsto 1+\sum_{i=1}^{n-1}\bar{a}_je_j+\bar{b}e_n$. \qed

\bibliographystyle{alpha}
\bibliography{SpecialCasesBibliography}

\end{document}